\newtheorem{theorem}{Theorem}[section]
\newtheorem{lemma}[theorem]{Lemma}
\newtheorem{corollary}[theorem]{Corollary}
\theoremstyle{definition}
\newtheorem{definition}[theorem]{Definition}
\newtheorem{example}[theorem]{Example}
\newcommand{\defn}[1]{{\em #1}}
\theoremstyle{remark}
\newtheorem{remark}[theorem]{Remark}
\newcommand{\dd}{\; \mathrm{d}}
\newcommand\sbs{\subseteq}
\newcommand\ip[2]{\left\langle#1,#2\right\rangle}
\newcommand\one{{\bf1}}
\newcommand\tr{\opk{tr}}
\newcommand\abs[1]{\left|#1\right|}
\newcommand\cxi{\mathrm{i}}
\newcommand\cA{{\mathcal A}}
\newcommand\cE{{\mathcal E}}
\newcommand\cT{{\mathcal T}}
\newcommand\cU{{\mathcal U}}
\newcommand\cx{{\mathbb C}}
\newcommand\nn{{\mathbb N}}
\newcommand\zz{{\mathbb Z}}
\newcommand\re{{\mathbb R}}
\newcommand\al{\alpha}
\newcommand\be{\beta}
\newcommand\de{\delta}
\newcommand\De{\Delta}
\newcommand\om{\omega}
\newcommand\Om{\Omega}
\newcommand\opk[1]{\mathop{\mathrm{#1}}\nolimits}
\newcommand\Harm{\opk{Harm}}
\newcommand\Hom{\opk{Hom}}
\newcommand\Sym{\mathrm{Sym}}
\renewcommand{\Re}{\opk{Re}}
\renewcommand{\Im}{\opk{Im}}
\newcommand{\cl}{\opk{cl}}
\newcommand{\spn}{\opk{span}}
\newcommand\la{\lambda}
\newcommand\co[1]{\overline{#1}}
\newcommand\bal{\bm{\alpha}}
\newcommand\bbe{\bm{\beta}}
\newcommand\bla{\bm{\lambda}}
\newcommand\cS{{\mathcal S}}
\newcommand\cV{{\mathcal V}}
\begin{document}

\title{Complex spherical designs and codes}
\date{}

\author{Aidan Roy \\ Department of Combinatorics and Optimization \& \\ Institute for Quantum Computing, \\ University of Waterloo \\ aproy@uwaterloo.ca \and
Sho Suda \\ Graduate School of Information Sciences, \\ Tohoku University \\ suda@ims.is.tohoku.ac.jp}

\maketitle

Abstract: Real spherical designs and real and complex projective designs have been shown by Delsarte, Goethals, and Seidel to give rise to association schemes when the strength of the design is high compared to its degree as a code. In contrast, designs on the complex unit sphere remain relatively uninvestigated, despite their importance in numerous applications. In this paper we develop the notion of a complex spherical design and show how many such designs carry the structure of an association scheme. In contrast with the real spherical designs and the real and complex projective designs, these association schemes are nonsymmetric.

\tableofcontents

\section{Introduction} 

In the 1970's, Delsarte, Goethals and Seidel~\cite{dgs,dgs2} introduced the notions of real spherical designs and real and complex projective designs as a generalization of finite incomplete block designs. Roughly speaking, a spherical design of strength $t$ is a finite set of points on a unit sphere such that any polynomial of degree $t$ has the same average value on those points as it does on the entire sphere. In their seminal paper, Delsarte et al.~found a number of lower bounds on the size of a design, and showed that designs which are tight to those bounds carry the structure of an association scheme. They also introduced the notions of a spherical or projective code, which is a finite collection of points on the unit sphere or in projective space such that only small number of distinct angles occur between points. They found upper bounds on the size of a code, and pointed out a close relationship between codes and designs. 

Real spherical designs and real and complex projective designs have been studied often in the ensuing years, both for their interesting combinatorics properties and because of a variety of applications in information theory. However, the notion of a complex spherical design, in which one chooses points on a complex unit sphere rather than in complex projective space, remains relatively uninvestigated. In this paper, we fill some of this void. Given a set of unit vectors $X$ in $\cx^d$, we work with its inner product set
\[
A(X) := \{a^*b : a, b \in X, a \neq b \}
\]
rather than the absolute values of the inner products, as one would in the projective case. We find bounds similar to those of Delsarte et al.~and show that in many cases, association schemes again arise.

There are a number of reasons why complex spherical designs have, until now, been left out of the picture. Firstly, it is not at all obvious that complex spherical designs are a non-trivial extension of complex projective designs. In other words, are there complex spherical designs whose corresponding points in projective space are not projective designs of an analogous strength? We show that indeed spherical designs can have a richer structure, and further that the association schemes they carry can be nontrivial. The second reason for the neglect is that the technical machinery required to investigate designs, that of harmonics and zonal polynomials, does not work as nicely as in the projective or real spherical cases. In particular, the complex unit sphere with this choice of inner product is neither a Delsarte space \cite{neumaier} nor a compact metric space \cite{lev}, conditions which allow the theory of Delsarte et al.~to work quite generally. Nevertheless, several authors have extended the Delsarte techniques to other spaces~\cite{bbc,roy,rs}, and here we show how they can be applied to the complex unit sphere as well.

Collections of points on the complex unit sphere have a number of important applications. One of the reasons they have been studied intensively in the last few years is their use as measurements in quantum information theory~\cite{sco,rs2}. Roughly speaking, any complex projective $1$-design may be considered a quantum measurement, and certain types of projective $2$-designs such as ``mutually unbiased bases" and ``symmetric informationally complete POVMs", are optimal measurements for use in quantum state tomography~\cite{rbsc}. However, good complex projective designs play a natural role in a variety of other combinatorial applications such as sphere-packing or complex root systems~\cite[Chapter 3]{cs}. For these problems, complex spherical designs are also a natural structure to consider.

The main results of the paper are:
\begin{itemize}
\item Theorem~\ref{thm:relbound}, which establishes Delsarte linear programming bounds for complex spherical $\cT$-designs;
\item Theorem~\ref{thm:S1}, which shows that certain complex spherical $\cT$-designs with only a small number of inner product values carry association schemes; 
\item Theorem~\ref{thm:S8}(2), which gives sufficient conditions for antipodal $\cT$-designs (designs that are invariant under pointwise multiplication by an $n$-th root of unity) to carry an association scheme; and 
\item Theorem~\ref{designAS}, which shows that some parameters of a nonsymmetric association scheme can be characterized using complex designs constructed by treating the idempotents of the scheme as multiples of Gram matrices.
\end{itemize}

The outline of the paper is as follows. Section \ref{sec:polynomials} reviews the technical machinery of harmonic polynomials required to discuss the complex unit sphere. 
In Section \ref{sec:designs}, we introduce the notion of a complex spherical design and how it is related to other types of designs, such as real spherical designs and complex projective designs. 
In Sections \ref{sec:complexcode} and \ref{sec:bounds} we define code on complex unit sphere as a set of vectors with few inner product values, and prove upper bounds on the size of a design and lower bounds on the size of a code. We also give conditions for the bounds to be tight.
Section \ref{sec:schemes} introduces nonsymmetric association schemes and shows that for certain designs, the relations defined by the inner products carry the structure of a scheme.
Sections \ref{sec:connections} and \ref{sec:antipodal} explain how the schemes arising from complex spherical designs relate to the schemes from real or projective designs. 
In Section \ref{sec:designsfromschemes} we show that certain nonsymmetric association schemes naturally give rise to small strength complex spherical designs, similar to the way symmetric association schemes produce real spherical designs. 
The remaining sections discuss designs of particular type: derived designs, which are constructed by deleting points; designs constructed from orbits of finite subgroups in unitary group; and sporadic examples of association schemes arising from designs which do not fit into any earlier results.

\section{Polynomials of the unit sphere}\label{sec:polynomials}
For further details on harmonic polynomials, see~\cite{dgs,rud}.

Let $\Om(d)$ denote the unit sphere in $\cx^d$. Then for $z = (z_1,\ldots,z_d)$ in $\Om(d)$, we let $\Hom(k,l)$ denote the set of polynomials 
in $z$ which are homogeneous of degree $k$ in $\{z_1,\ldots,z_d\}$ and homogeneous of degree $l$ in $\{\overline{z_1},\ldots,\overline{z_d}\}$, where $\overline{z_i}$ is the complex conjugate of $z_i$. Since $\sum_{i=1}^d z_i\overline{z_i} = z^*z = 1$, it follows that $\Hom(k,l)$ is contained in $\Hom(k+1,l+1)$. 

The space $\Hom(k,l)$ is a representation of $U(d)$, the group of $d \times d$ unitary matrices, with the following action: for $U \in U(d)$, $f \in \Hom(k,l)$, and $z \in \Om(d)$, $(Uf)(z) := f(U^*z)$. This representation decomposes into irreducible representations called \defn{harmonic spaces}. $\Harm(k,l)$ is defined as the subspace of $\Hom(k,l)$ in the kernel of the Laplacian operator $\De = \sum_{i=1}^d \partial^2/\partial z_i \partial \overline{z_i}$, and $\Hom(k,l)$ decomposes in the following way:
\[
\Hom(k,l) = \bigoplus_{i=0}^{\min\{k,l\}} \Harm(k-i,l-i).
\]
Since
\[
\dim(\Hom(k,l)) = {d+k-1 \choose d-1}{d+l-1 \choose d-1},
\]
the decomposition implies that
\[
\dim(\Harm(k,l)) = {d+k-1 \choose d-1}{d+l-1 \choose d-1} - {d+k-2 \choose d-1}{d+l-2 \choose d-1}.
\]
We define $m_{k,l} := \dim(\Harm(k,l))$.

Define an inner product on functions $f$ and $g$ on $\Om(d)$ as follows:
\[
\ip{f}{g} := \int_{\Om(d)} \overline{f(z)}g(z) \dd z.
\]
Here $\mathrm{d}z$ is the unique invariant Haar measure on $\Om(d)$, normalized so that $\int_{\Om(d)}\mathrm{d}z = 1$. With respect to this inner product, $\Harm(k,l)$ is orthogonal to $\Harm(k',l')$ whenever $(k,l) \neq (k',l')$.

Let $\nn$ denote the set of nonnegative integers. For each $(k,l) \in \nn^2$, we define a Jacobi polynomial $g_{k,l}$ as follows:
\[
g_{k,l}(x) := \frac{d+k+l-1}{(d-1)!} \sum_{r=0}^{\min\{k,l\}} (-1)^r \frac{(d+k+l-r-2)!}{r!(k-r)!(l-r)!}x^{k-r}\overline{x}^{l-r}.
\]
These polynomials are orthogonal in the following sense: for any fixed $a \in \Om(d)$, the function 
\[
g_{k,l,a}: z \mapsto g_{k,l}(a^*z)
\] 
is in $\Harm(k,l)$. Moreover, any two such maps $g_{k,l,a}$ and $g_{k',l',a'}$ are orthogonal for $(k,l) \neq (k',l')$, since $\Harm(k,l)$ and $\Harm(k',l')$ are orthogonal. The maps $g_{k,l,a}$ are sometimes called the zonal orthogonal polynomials for $\Harm(k,l)$.

Explicitly, the first few Jacobi polynomials are 
\begin{align*}
g_{0,0}(x) & = 1, \\
g_{1,1}(x) & = (d+1)(dx\overline{x} - 1), \\
g_{2,2}(x) & = \frac{d(d+3)}{4}((d+1)(d+2)x^2\overline{x}^2 - 4(d+1)x\overline{x} + 2), \\
g_{1,0}(x) & = dx, \\
g_{2,1}(x) & = \frac{d(d+2)}{2}((d+1)x^2\overline{x} - 2x), \\
g_{3,2}(x) & = \frac{d(d+1)(d+4)}{12}((d+2)(d+3)x^3\overline{x}^2 - 6(d+2)x^2\overline{x} + 6x), \\
g_{2,0}(x) & = \frac{d(d+1)}{2}x^2.
\end{align*}
The Jacobi polynomials are also normalized so that $g_{k,l}(1) = \dim(\Harm(k,l))$. The polynomials $g_{k,k}$ are the Jacobi polynomials of the complex projective space discussed in Delsarte et al.~\cite{dgs}. Recursively, the Jacobi polynomials satisfy
\[
x g_{k,l}(x) = a_{k,l}g_{k+1,l}(x)+b_{k,l}g_{k-1,l}(x)
\]
where $a_{k,l}=\frac{k+1}{d+k+l}$ and $b_{k,l}=\frac{d+l-2}{d+k+l-2}$. 

The essential property of the Jacobi polynomials is the following theorem, known as Koornwinder's addition theorem~\cite{koo}. 

\begin{theorem}\label{thm:addition}
Let $\{e_1,\ldots,e_{m_{k,l}}\}$ be an orthonormal basis for the space $\Harm(k,l)$. Then for any $a,b \in \Om(d)$,
\[
\sum_{i=1}^{m_{k,l}} \overline{e_i(a)}e_i(b) = g_{k,l}(a^*b).
\]
\end{theorem}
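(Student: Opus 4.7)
Denote the left-hand side by $F(a,b) := \sum_{i=1}^{m_{k,l}} \overline{e_i(a)} e_i(b)$. The first step is to observe that $F$ is independent of the chosen orthonormal basis: if $\{e_i'\}$ is another such basis and $e_i' = \sum_j u_{ij} e_j$ with $(u_{ij})$ unitary, then using $\sum_i \overline{u_{ij}} u_{ik} = \de_{jk}$ the sum $\sum_i \overline{e_i'(a)} e_i'(b)$ collapses back to $\sum_j \overline{e_j(a)} e_j(b)$. Since $U(d)$ acts unitarily on $\Harm(k,l)$, for any $U \in U(d)$ the family $\{Ue_i\}$ is also an orthonormal basis, and basis-independence translates into the covariance identity $F(a,b) = F(Ua, Ub)$ for every $U \in U(d)$.

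Next, $U(d)$ acts transitively on pairs $(a,b) \in \Om(d) \times \Om(d)$ with a prescribed value of $a^*b$ (extend each pair to a frame and map one frame to the other by a unitary), so the covariance forces $F(a,b)$ to depend only on $a^*b$; write $F(a,b) = h(a^*b)$. Now fix $a$ and let $b$ vary: the function $b \mapsto h(a^*b)$ belongs to $\Harm(k,l)$ (each $e_i$ does) and is invariant under the stabilizer $U(d)_a \cong U(d-1)$. The zonal polynomial $g_{k,l,a}$ also lies in $\Harm(k,l)$ and is likewise $U(d)_a$-invariant. The representation-theoretic fact that the $U(d-1)$-fixed subspace of the irreducible $U(d)$-module $\Harm(k,l)$ is one-dimensional then forces $h(a^*b) = c_{k,l}\, g_{k,l}(a^*b)$ for some scalar $c_{k,l}$.

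To pin down $c_{k,l}$ I would specialize to $a = b$, so that $a^*b = 1$. The right-hand side becomes $c_{k,l}\, g_{k,l}(1) = c_{k,l}\, m_{k,l}$ using the stated normalization. The left-hand side is $F(a,a) = \sum_i |e_i(a)|^2$, which by the same basis-independence/covariance argument is a $U(d)$-invariant, hence constant, function of $a \in \Om(d)$; integrating against the normalized Haar measure gives $\sum_i \ip{e_i}{e_i} = m_{k,l}$, so $F(a,a) = m_{k,l}$ pointwise. Comparing yields $c_{k,l} = 1$, as required.

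The main obstacle is the representation-theoretic input in the second paragraph, namely the one-dimensionality of the $U(d-1)$-fixed subspace of the irreducible $U(d)$-module $\Harm(k,l)$. This is the Gelfand-pair property of $(U(d), U(d-1))$ and lies slightly outside the machinery assembled so far. A self-contained verification can be given by taking $a = (1,0,\ldots,0)$, noting that every $U(d-1)$-invariant element of $\Hom(k,l)$ is a linear combination of the monomials $z_1^{k-c}\overline{z_1}^{l-c}\bigl(\sum_{i\ge 2} z_i\overline{z_i}\bigr)^c$ for $0 \le c \le \min(k,l)$, and then solving the single-variable recursion imposed by $\De = 0$ to see that the harmonic subspace among these has dimension exactly one.
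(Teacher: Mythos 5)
The paper does not prove this statement at all: it is quoted as Koornwinder's addition theorem with a citation to the literature, so there is no internal argument to measure yours against. Your proof is the classical zonal--spherical--function argument and it is correct. The chain (basis-independence of $F$) $\Rightarrow$ ($U(d)$-covariance) $\Rightarrow$ ($F(a,b)$ depends only on $a^*b$, by transitivity of $U(d)$ on pairs with prescribed inner product), followed by placing both $F(a,\cdot)$ and $g_{k,l,a}$ in the one-dimensional $U(d)_a$-fixed subspace of $\Harm(k,l)$ and normalizing via $F(a,a)=\sum_i|e_i(a)|^2=m_{k,l}=g_{k,l}(1)$, is sound; note that you never need irreducibility of $\Harm(k,l)$, only the one-dimensionality of the fixed subspace, and that the covariance is what makes the proportionality constant independent of $a$. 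Two dependencies deserve to be made explicit. First, you take as given the paper's earlier (likewise unproved) assertion that $g_{k,l,a}\in\Harm(k,l)$; without it you would also have to check that the explicit polynomial $g_{k,l}$, restricted to the sphere via $\sum_{i\ge2}z_i\co{z_i}=1-z_1\co{z_1}$, coincides with the normalized harmonic invariant your recursion produces. Second, the claim that every $U(d-1)$-invariant of $\Hom(k,l)$ is a combination of $z_1^{k-c}\co{z_1}^{l-c}s^{c}$ with $s=\sum_{i\ge2}z_i\co{z_i}$ is the first fundamental theorem of invariant theory for the unitary group and should be cited or argued (for instance by averaging over the diagonal torus and permutations of the last $d-1$ coordinates). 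Granting it, the computation
\[
\De\bigl(z_1^{k-c}\co{z_1}^{l-c}s^{c}\bigr)=(k-c)(l-c)\,z_1^{k-c-1}\co{z_1}^{l-c-1}s^{c}+c(d+c-2)\,z_1^{k-c}\co{z_1}^{l-c}s^{c-1}
\]
yields the two-term recursion $(k-c)(l-c)\al_c+(c+1)(d+c-1)\al_{c+1}=0$, whose solution space is indeed one-dimensional for $d\ge 2$, confirming the final step of your outline.
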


From the addition theorem it follows that $g_{k,l,a} = \sum_{i=1}^{m_{k,l}} \overline{e_i(a)}e_i$, and, taking inner products, $\ip{g_{k,l,a}}{e_i} = e_i(a)$. Therefore for any $p \in \Harm(k,l)$,
\[
\ip{g_{k,l,a}(x)}{p(x)} = p(a).
\]
In other words, $g_{k,l,a}$ is the unique element of $\Harm(k,l)$ whose dual maps each polynomial in $\Harm(k,l)$ to its value at $a$. It follows immediately that $\{x \mapsto g_{k,l}(a^*x) : a \in \Om(d)\}$ spans $\Harm(k,l)$. It also implies that the Jacobi polynomials have a certain positive semidefinite property which is the basis for Delsarte's bounds. For any subset $X \sbs \Om(d)$,
\begin{equation}\label{eqn:zonalsum}
\sum_{a,b \in X} g_{k,l}(a^*b) = \sum_{a,b \in X} \ip{g_{k,l,b}}{g_{k,l,a}} = \ip{\sum_{b \in X} g_{k,l,b}}{\sum_{a \in X} g_{k,l,a}} \geq 0.
\end{equation}

Also note that that $g_{k,l}(x)$ can expressed in terms of the Hypergeometric function ${}_2F_1(a,b;c;x)$ or the usual Jacobi polynomial $P_n^{(\al,\be)}(x)$:
\begin{align*}
g_{k,l}(x)&=m_{k,l}x^k\co{x}^l{}_2F_1(-k,-l;d-1;1-\frac{1}{x\co{x}})\\
&=m_{k,l}\frac{l!(d-2)!}{(l+d-2)!}x^{k-l}P_l^{d-2,k-l}(2x\co{x}-1)\\
&=m_{k,l}\frac{k!(d-2)!}{(k+d-2)!}\co{x}^{l-k}P_k^{d-2,l-k}(2x\co{x}-1).
\end{align*}
It follows that 
\[
g_{k,l}(x)g_{k',l'}(x)=\sum_{i=0}^{\min\{k+k',l+l'\}}q_ig_{k+k'-i,l+l'-i}(x),
\]
for some positive constants $q_i$ such that if $k+k' = l+l'$, then $q_{k+k'} = m_{k,l}\de_{k,l'}$. (In other words, the coefficients in the Jacobi polynomial expansion of $g_{k,l}(x)g_{k',l'}(x)$ are nonnegative, and the coefficient of $g_{0,0}$ is nonzero if and only if $k = l'$ and $k'=l$.)

Given polynomials $F(x)$ and $H(x)$ in $\re[x,\overline{x}]$, expand each as a sum of Jacobi polynomials as follows:
$$F(x)=\sum_{(k,l)\in\nn^2}f_{k,l}g_{k,l}(x),\quad H(x)=\sum_{(k,l)\in\nn^2}h_{k,l}g_{k,l}(x).$$
From the facts about the coefficients $q_i$ above, we get the following.
\begin{lemma}\label{coef}
Let $H(x)=F(x)g_{m,n}(x)/g_{m,n}(1)$ for some $(m,n)\in\nn^2$.
\begin{itemize}
\item[(i)] If $f_{k,l}\geq0$ for all $(k,l)\in\nn^2$, then $h_{k,l}\geq0$ for all $(k,l)\in\nn^2$.
\item[(ii)] If $f_{k,l}>0$ for all $(k,l)\in\nn^2$, then $h_{k,l}>0$ for all $(k,l)\in\nn^2$.
\item[(iii)] $h_{0,0}=f_{n,m}$.
\end{itemize}
\end{lemma}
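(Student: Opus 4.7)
The plan is to multiply $F$ by $g_{m,n}$ term by term using the product expansion displayed just above the lemma,
\[
g_{k,l}(x)\,g_{m,n}(x) \;=\; \sum_{i=0}^{\min\{k+m,\,l+n\}} q_i\,g_{k+m-i,\,l+n-i}(x),
\]
where every $q_i\ge 0$, and where the \emph{only} coefficient that may vanish is the one at the top index $i=k+m=l+n$ (when that case applies), which equals $m_{k,l}\de_{k,n}$ by the parenthetical remark. Collecting Jacobi components, I get
\[
h_{a,b} \;=\; \frac{1}{g_{m,n}(1)} \sum_{(k,l,i)} f_{k,l}\,q_i,
\]
where the sum runs over all triples with $k+m-i=a$, $l+n-i=b$, $k,l\ge 0$, and $0\le i\le\min\{k+m,l+n\}$. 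Part (i) is then immediate, since every summand is a product of nonnegatives.

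For part (ii) I need to produce a single strictly positive summand for each target $(a,b)$. My explicit choice is $(k,l,i)=(a+n,\,b+m,\,m+n)$: this clearly yields the correct indices and satisfies $i=m+n\le m+n+\min\{a,b\}=\min\{k+m,l+n\}$, so it is a legitimate term. The associated $q_{m+n}$ can only drop to zero in the boundary case $k+m=l+n$ with $i$ at the maximum, which forces $a=b$ and $\min\{a,b\}=0$, i.e.\ $a=b=0$; but then $q_{m+n}=m_{k,l}\de_{k,n}=m_{n,m}\de_{n,n}=m_{n,m}>0$. So for every $(a,b)$, this triple contributes strictly positively, and positivity of all $f_{k,l}$ forces positivity of $h_{a,b}$.

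For part (iii), the parenthetical tells us that among all $(k,l)$, the product $g_{k,l}\,g_{m,n}$ contributes to $g_{0,0}$ exactly when $(k,l)=(n,m)$, with coefficient $q_{n+m}=m_{n,m}\de_{n,n}=m_{n,m}$. Hence the only term of $F\,g_{m,n}$ that produces a $g_{0,0}$ is the $(n,m)$ term of $F$, giving $h_{0,0}=f_{n,m}\,m_{n,m}/g_{m,n}(1)$. Since $g_{m,n}(1)=m_{m,n}$ and the dimension formula makes $m_{m,n}=m_{n,m}$ manifestly symmetric, we conclude $h_{0,0}=f_{n,m}$.

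The only genuinely delicate step is the positivity in (ii): I must check that the single triple I pick never lands on the one boundary coefficient that the product rule permits to vanish. Once that case analysis is carried out (it reduces entirely to $a=b=0$), the rest of the proof is index bookkeeping on top of the Jacobi product formula already established.
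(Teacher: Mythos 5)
Your proof is correct and follows exactly the route the paper intends: the paper offers no written proof beyond the remark that the lemma follows ``from the facts about the coefficients $q_i$ above,'' and your argument is precisely the omitted bookkeeping, including the one genuinely delicate point --- checking that the single term $(k,l,i)=(a+n,b+m,m+n)$ chosen in (ii) never lands on the possibly-vanishing $g_{0,0}$-coefficient except when $a=b=0$, where it equals $m_{n,m}>0$. Nothing further is needed.
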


Finally, we require the average values of $g_{k,l}$ over the unit sphere. If $(k,l) \neq (0,0)$, then $\Harm(k,l)$ is orthogonal to the space of constant functions $\Harm(0,0)$. It follows that for any $a \in \Om(d)$,
\[
\int_{\Om(d)} g_{k,l}(a^*z) \dd z = \int_{\Om(d)} g_{k,l,a}(z) \dd z = \ip{1}{g_{k,l,a}} = 0.
\]

However, it is also possible to evaluate polynomials in $\Hom(k,l)$ directly. To do this it is useful to consider collections of points on the unit sphere that differ only by a complex root of unity.

\begin{definition} A set of points $X \sbs \Om(d)$ is \defn{$n$-antipodal} if for some $n$-th primitive root of unity $\om$ and some $L \sbs X$,
\[
(L,\om L, \ldots, \om^{n-1}L) \mbox{ is a partition of } X.
\]
We call $X$ an \defn{$n$-antipodal cover} of $L$.
\end{definition}

The following lemma shows that polynomials in $\Hom(k,l)$ often vanish when averaged over a set of $n$-antipodal points, provided that $k \neq l$.

\begin{lemma}\label{lem:avgantipodal}
Let $X$ be an $n$-antipodal set in $\Om(d)$ and let $k$ and $l$ be nonnegative integers such that $k \not\equiv l \mod n$. Then for any $f \in \Hom(k,l),$
\[
\sum_{x \in X} f(x) = 0.
\]
\end{lemma}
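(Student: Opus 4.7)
The plan is to exploit the homogeneity of $f$ under scalar multiplication by $\om$, together with the geometric series identity for $n$-th roots of unity. First I would fix a primitive $n$-th root of unity $\om$ and a subset $L \sbs X$ such that $(L, \om L, \ldots, \om^{n-1}L)$ partitions $X$, as guaranteed by the definition of $n$-antipodal.

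Next I would record the basic transformation rule for $f \in \Hom(k,l)$: for any scalar $\lambda \in \cx$ and $z \in \cx^d$, one has $f(\la z) = \la^k \overline{\la}^{\,l} f(z)$, since $f$ is homogeneous of degree $k$ in the coordinates of $z$ and degree $l$ in their conjugates. In particular, taking $\la = \om^j$ gives $f(\om^j y) = \om^{j(k-l)} f(y)$ for every $y$ and every integer $j$.

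Using the partition of $X$, I would then split the sum and factor:
\[
\sum_{x \in X} f(x) = \sum_{j=0}^{n-1} \sum_{y \in L} f(\om^j y) = \left(\sum_{j=0}^{n-1} \om^{j(k-l)}\right) \sum_{y \in L} f(y).
\]
Finally, since $k \not\equiv l \pmod n$ and $\om$ is a primitive $n$-th root of unity, $\om^{k-l} \neq 1$, so the geometric sum equals $(1-\om^{n(k-l)})/(1-\om^{k-l}) = 0$, and the result follows.

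There is essentially no obstacle here; the only detail worth checking is that the homogeneity identity $f(\la z) = \la^k \overline{\la}^{\,l} f(z)$ holds for complex scalars $\la$ (not just positive reals), which is immediate from the definition of $\Hom(k,l)$ as polynomials homogeneous of degree $k$ in $z_i$ and degree $l$ in $\overline{z_i}$, treated as independent variables.
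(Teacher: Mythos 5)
Your proof is correct and follows essentially the same route as the paper's: use the partition $X = \bigcup_{j=0}^{n-1}\om^j L$, the homogeneity identity $f(\om^j y)=\om^{j(k-l)}f(y)$, and the vanishing of the geometric sum $\sum_{j=0}^{n-1}\om^{j(k-l)}$ when $\om^{k-l}\neq 1$. The extra care you take in verifying $f(\la z)=\la^k\overline{\la}^{\,l}f(z)$ for complex $\la$ (with $\overline{\om}=\om^{-1}$ on the unit circle) is a detail the paper passes over silently, but the argument is identical.
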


\begin{proof}
Let $\om$ be a primitive $n$-th root of unity, so $X = \cup_{i=0}^{n-1} \om^i L$ for some subset $L$. Since $f$ is homogeneous, $f(\om x) = \om^{k-l} f(x)$. Therefore,
\begin{align*}
\sum_{x \in X} f(x) & = \sum_{x \in L} \sum_{i=0}^{n-1} f(\om^i x) \\
& = \sum_{x \in L} f(x) \sum_{i=0}^{n-1} \om^{i(k-l)} \\
& = 0,
\end{align*}
since $\om^{k-l} \neq 1$ is another $n$-th root of unity.
\end{proof}

By averaging $n$-antipodal points in $\Om(d)$ for large enough $n$, a similar argument shows that if $f(z)$ is any monomial in $\Hom(k,l)$ with $k \neq l$, then
\[
\int_{\Om(d)} f(z) \dd z = 0.
\]
In fact, the average of a monomial $f \in \Hom(k,k)$ is also trivial unless the power of each coordinate $z_i$ is the same as the power of its conjugate $\overline{z_i}$. If on the other hand each $z_i$ and $\overline{z_i}$ have the same exponent, then a theorem from Rudin~\cite{rud} explains how to evaluate monomials in $\Hom(k,k)$. 

\begin{theorem}
If $f(z) = (z_1\overline{z_1})^{a_1}\ldots(z_d\overline{z_d})^{a_d}$ is a monomial in $\Hom(k,k)$ (so $\sum_{i=1}^d a_i = k$), then
\[
\int_{\Om(d)} f(z) \dd z = \frac{(d-1)!a_1!\ldots a_d!}{(d+k-1)!}.
\]
\end{theorem}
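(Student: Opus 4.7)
The plan is to reduce the spherical integral to a Gaussian integral on $\cx^d$, exploiting the unitary invariance of both measures. Let $Z = (Z_1, \ldots, Z_d)$ consist of i.i.d.\ standard complex Gaussians, each with density $\pi^{-1}e^{-|z|^2}$ on $\cx$. The joint density $\pi^{-d}e^{-\|z\|^2}$ depends only on $\|z\|$, hence is invariant under $U(d)$; by uniqueness of the Haar measure on $\Om(d)$ this forces $Z/\|Z\|$ to be uniformly distributed on $\Om(d)$ and independent of $\|Z\|$. Since the monomial $f(z) = \prod_i (z_i \co{z_i})^{a_i}$ is homogeneous of total degree $2k$, the factorization $Z = \|Z\|\cdot (Z/\|Z\|)$ yields
\[
\mathbb{E}[f(Z)] \;=\; \mathbb{E}\bigl[\|Z\|^{2k}\bigr]\cdot\int_{\Om(d)} f(z)\dd z.
\]

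Both factors on the right reduce to one-line gamma computations. The left-hand side factors across coordinates, and polar coordinates on $\cx$ give $\mathbb{E}[|Z_i|^{2a_i}] = \int_0^\infty r^{a_i}e^{-r}\dd r = a_i!$, so $\mathbb{E}[f(Z)] = a_1!\cdots a_d!$. For the norm factor, $\|Z\|^2 = \sum_i |Z_i|^2$ is a sum of $d$ i.i.d.\ $\mathrm{Exp}(1)$ variables, hence has a $\mathrm{Gamma}(d,1)$ distribution, whose $k$-th moment is $(d+k-1)!/(d-1)!$. Dividing gives the value $\tfrac{(d-1)!\,a_1!\cdots a_d!}{(d+k-1)!}$ asserted in the theorem.

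The only real content is the identification of the uniform measure on $\Om(d)$ as the pushforward of the Gaussian under $z \mapsto z/\|z\|$ together with the independence from $\|Z\|$; everything else is gamma-integral bookkeeping. I expect this measure-theoretic step to be the main conceptual obstacle, although it is entirely standard. A fully self-contained alternative is induction on $d$: integrate first over $|z_d| \in [0,1]$, using the natural fibration of $\Om(d)$ over $[0,1]$ with fiber a scaled copy of $\Om(d-1)$, and apply the inductive hypothesis to the residual sphere. This replicates the same beta/gamma manipulations in a more cumbersome form, which is why the Gaussian route is preferable.
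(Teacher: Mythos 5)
Your argument is correct. Note that the paper itself gives no proof of this statement: it is quoted from Rudin's book without argument, so there is nothing internal to compare against. Your Gaussian reduction --- writing the uniform measure on $\Omega(d)$ as the pushforward of the standard complex Gaussian on $\mathbb{C}^d$ under $z \mapsto z/\|z\|$, using independence of $\|Z\|$ and $Z/\|Z\|$ together with the homogeneity $f(rz)=r^{2k}f(z)$, and then evaluating $\mathbb{E}[|Z_i|^{2a_i}]=a_i!$ and $\mathbb{E}[\|Z\|^{2k}]=(d+k-1)!/(d-1)!$ by gamma integrals --- is the standard route and is essentially the proof in Rudin (polar decomposition of the Gaussian integral over $\mathbb{C}^d$). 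The one step you flag as the ``main conceptual obstacle,'' namely that rotation invariance of the Gaussian density forces the angular part to be Haar-distributed and independent of the radius, is indeed the only non-bookkeeping ingredient, and your appeal to uniqueness of the invariant probability measure on $\Omega(d)$ settles it. All the explicit computations check out, so the proposal stands as a complete and self-contained proof of the cited result.
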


In particular, we can evaluate $\int_{\Om(d)} \abs{a^*z}^{2k} \dd z$ for any $a \in \Om(d)$ by noting that the integral is invariant of the choice of $a$. Letting $e_1$ denote the first standard basis vector in $\Om(d)$, 
\begin{eqnarray*}
\int_{\Om(d)} \abs{a^*z}^{2k} \dd z & = &\int_{\Om(d)} \abs{e_1^*z}^k \dd z = \int_{\Om(d)} (z_1\overline{z_1})^k \dd z \\
& = & \frac{(d-1)!k!}{(d+k-1)!} = {d+k-1 \choose d-1}^{-1}.
\end{eqnarray*}

\section{Complex spherical designs}\label{sec:designs}

We now introduce complex spherical designs. Like other designs, a finite set of points in $\Om(d)$ is a design if, for a certain class of polynomials, the average over $X$ is the same as the average over $\Om(d)$. 

In order to define complex spherical designs, we require a partial order on pairs of nonnegative integers $(k,l) \in \nn^2$, where $k$ and $l$ are the degrees of the bivariate polynomials in $x$ and $\overline{x}$. We define the \defn{product order} as follows: $(k,l) \preceq (m,n)$ if and only if $k \leq m$ and $l \leq n$. 

 A \defn{lower set} with respect to $\succeq$ is a finite set $\cT$ in $\nn^2$ such that if $(k,l)$ is in $\cT$, so is $(m,n)$ for all $(m,n) \preceq (k,l)$. Often we will specify a lower set as the closure of its maximal elements. For example, $\cT = \cl(\{(0,2),(1,1),(2,0)\})$ refers to the lower set $\{(0,0),(0,1),(0,2),(1,0),(1,1),(2,0)\}$.

\begin{definition}
Let $X$ be a finite subset of $\Om(d)$ and let $\cT$ be a lower set in $\nn^2$. Then $X$ is a \defn{complex spherical $\cT$-design} if, for every polynomial $f \in \Hom(k,l)$ such that $(k,l)$ is in $\cT$, 
\begin{equation}\label{eqn:klregular}
\frac{1}{|X|} \sum_{z \in X} f(z) = \int_{\Om(d)} f(z) \dd z.
\end{equation}
\end{definition}

We call a set $X$ \defn{$(k,l)$-regular}~\cite{mop} if every $f \in \Hom(k,l)$ satisfies $\eqref{eqn:klregular}$.
It is obvious that if $X$ is $(k,l)$-regular, then $X$ is $(l,k)$-regular. 

Delsarte $\cT$-design in association schemes whose indices of primitive idempotents has a structure of a poset was studied by \cite{mar}.
If we regard designs in real unit sphere as an analogue of designs in $Q$-polynomial schemes, designs in complex unit sphere may be regarded as an analogue of designs in such association schemes. 

Since $\cT$ is a lower set, it follows that if $X$ is a $\cT$-design, then $X$ is also a $\cT'$-design for every lower set $\cT'$ that is a subset of $\cT$. The following lemma, which is standard from design theory and an easy generalization of Delsarte et al.~\cite{dgs}, indicates that if suffices to check a small number of polynomial values to decide if a subset is a design. Let $H_{k,l}$ denote the \defn{$(k,l)$-characteristic matrix} of $X$: the rows and columns of $H_{k,l}$ are indexed by $X$ and an orthonormal basis $\{e_1,\ldots,e_{m_{k,l}}\}$ of $\Harm(k,l)$, with the $(x,i)$ entry of $H_{k,l}$ equal to $e_i(x)$.

\begin{lemma} \label{lem:designchar}
Let $X$ be a finite nonempty subset of $\Om(d)$ and let $\cT$ be a lower set in $\nn^2$. Then the following are equivalent:
\begin{itemize}
\item[(i)] $X$ is a complex spherical $\cT$-design.
\item[(ii)] $H^*_{k,l}H_{k',l'} = |X|\de_{k,k'}\de_{l,l'}I$ for all $k,l,k',l'$ such that $(k+l',l+k') \in \cT$.
\item[(iii)] $\sum_{x,y \in X} g_{k,l}(x^*y) = 0$ for all $(k,l) \neq (0,0)$ in $\cT$.
\end{itemize}
\end{lemma}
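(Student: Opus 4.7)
The plan is to show that (i), (ii), and (iii) are each equivalent to a single pivot condition, namely
\[
(\ast)\qquad \sum_{x \in X} e(x) = 0 \text{ for every } e \in \Harm(k,l) \text{ with } (k,l) \in \cT \setminus \{(0,0)\}.
\]

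For the equivalence (iii) iff $(\ast)$, Koornwinder's addition theorem (Theorem~\ref{thm:addition}) gives
\[
\sum_{x,y \in X} g_{k,l}(x^*y) = \sum_{i=1}^{m_{k,l}} \Big|\sum_{x \in X} e_i(x)\Big|^2,
\]
a sum of nonnegative terms that vanishes iff every $\sum_x e_i(x) = 0$. For (i) iff $(\ast)$, I would use the decomposition $\Hom(k,l) = \bigoplus_{i=0}^{\min\{k,l\}} \Harm(k-i,l-i)$ to split any test polynomial into harmonic pieces. The summand $\Harm(0,0)$ (present only when $k=l$) is constant and trivially satisfies (\ref{eqn:klregular}); every other harmonic piece $p \in \Harm(k-i,l-i)$ is orthogonal to constants so $\int p = 0$, and (\ref{eqn:klregular}) reduces to $\sum_x p(x) = 0$. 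Because $\cT$ is a lower set, all indices $(k-i,l-i)$ lie in $\cT$ whenever $(k,l) \in \cT$, so this matches $(\ast)$ exactly.

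For (ii), the $(i,j)$ entry of $H^*_{k,l} H_{k',l'}$ is $\sum_{x \in X}\overline{e_i(x)}\,e'_j(x)$, where $\overline{e_i}\,e'_j \in \Hom(l+k', k+l')$ and, by orthogonality of distinct harmonic spaces together with orthonormality of the bases, $\int_{\Om(d)} \overline{e_i(z)}\,e'_j(z) \dd z = \ip{e_i}{e'_j} = \de_{kk'}\de_{ll'}\de_{ij}$. Assuming (i), the hypothesis $(k+l',l+k') \in \cT$ combined with the elementary observation that $(p,q)$-regularity is equivalent to $(q,p)$-regularity (by conjugating test functions) yields $(l+k',k+l')$-regularity of $X$, so averaging produces (ii). Conversely, specializing (ii) to $k' = l' = 0$ (with $\Harm(0,0)$ spanned by the constant function $1$) forces $\overline{\sum_x e_i(x)} = 0$, equivalently $(\ast)$. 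The only nonroutine point is the index bookkeeping here: $\overline{e_i}\,e'_j$ naturally has bidegree $(l+k',k+l')$ while the hypothesis of (ii) references $(k+l',l+k')$, so the conjugation symmetry of regularity is what bridges the two.
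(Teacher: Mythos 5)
Your proof is correct and follows essentially the same route as the paper's: the addition theorem, orthogonality of the harmonic spaces, and the identification of the entries of $H_{k,l}^*H_{k',l'}$ with averages of products of basis functions. Your write-up is in fact more complete than the paper's own terse proof (which asserts the equivalences after verifying only one direction of each): the pivot condition $(\ast)$, the sum-of-squares identity $\sum_{x,y}g_{k,l}(x^*y)=\sum_i\bigl|\sum_{x}e_i(x)\bigr|^2$, the reduction of $\Hom(k,l)$ to its harmonic constituents via the lower-set property, and the conjugation step bridging $(l+k',k+l')$ with $(k+l',l+k')$ in part (ii) supply exactly the converse arguments the paper leaves implicit.
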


\begin{proof} Recall that $g_{k,l,x}: y \mapsto g_{k,l}(x^*y)$ is in $\Harm(k,l)$ and is therefore orthogonal to the constant function $1 \in \Harm(0,0)$. Therefore, $X$ is a $\cT$-design if and only if for all $(k,l) \neq (0,0)$ in $\cT$, 
\[
\sum_{x,y \in X} g_{k,l}(x^*y) = \sum_{x \in X}\int_{\Om(d)} g_{k,l}(x^*y) \dd y = \sum_{x \in X} \ip{1}{g_{k,l,x}} = 0.
\]
Similarly, consider the $(i,j)$ entry of $H_{k,l}^*H_{k',l'}$, namely
\[
(H_{k,l}^*H_{k',l'})_{ij} = \sum_{x \in X} \overline{e_i(x)}e_j(x),
\]
where $e_i \in \Hom(k,l)$ and $e_j \in \Hom(k',l')$, and compare to 
\[
|X| \int_{\Om(d)} \overline{e_i(x)}e_j(x) \dd x = |X| \de_{k,k'} \de_{l,l'} \de_{i,j}.
\]
All entries are equal if and only if $X$ is a $\cT$-design.
\end{proof}

From Lemma~\ref{lem:designchar} it follows that for a complex spherical $\cT$-design $X$ and any element $\sigma$ of unitary group, the image of X under the element of $\sigma$ is also a complex spherical $\cT$-design. 

Spherical designs are in some sense optimal with respect to taking polynomials of inner products, as the following lemma shows. 
The following inequalities can be regarded as a complex analogue of the result in \cite{sid}.

\begin{lemma}\label{lem:moment}
Let $X$ be a finite nonempty subset of $\Om(d)$. For any $(k,l) \in \nn^2$, 
\begin{equation}\label{eqn:polysum}
\frac{1}{|X|^2} \sum_{x,y \in X} (x^*y)^k(y^*x)^l \geq \begin{cases}
{d+k-1 \choose k-1}^{-1} & \mbox{if }k = l; \\
0& \mbox{otherwise.}
\end{cases}
\end{equation}
Moreover, let $\cT$ be a lower set. Then equality holds in \eqref{eqn:polysum} for all $(k,l) \in \cT$ if and only if $X$ is a $\cT$-design.
\end{lemma}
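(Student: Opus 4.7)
The plan is to reduce both the inequality and the equality characterization to the identity \eqref{eqn:zonalsum} applied term by term to a Jacobi expansion of the scalar monomial $a^k\overline{a}^l$. First I would express
\[
a^k\overline{a}^l \;=\; \frac{g_{k,0}(a)\,g_{0,l}(a)}{m_{k,0}\,m_{0,l}},
\]
using that $g_{k,0}$ and $g_{0,l}$ are, up to dimension constants, the pure monomials $x^k$ and $\overline x^l$. Applying the product formula stated just before Lemma \ref{coef} then yields an expansion
\[
a^k\overline{a}^l \;=\; \sum_{i=0}^{\min\{k,l\}} c_i\, g_{k-i,l-i}(a)
\]
with all $c_i\geq 0$, and the specialization $q_{k+k'}=m_{k,l}\de_{k,l'}$ identifies the $g_{0,0}$-coefficient as $c_{\min\{k,l\}}=m_{k,0}^{-1}\,\de_{k,l}=\binom{d+k-1}{d-1}^{-1}\de_{k,l}$.

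Next I would substitute $a=x^*y$ and sum over $X\times X$ to obtain
\[
\sum_{x,y\in X}(x^*y)^k(y^*x)^l \;=\; \sum_{i=0}^{\min\{k,l\}} c_i \sum_{x,y\in X} g_{k-i,l-i}(x^*y).
\]
By \eqref{eqn:zonalsum}, each inner sum is nonnegative, and each $c_i$ is nonnegative, so dropping all contributions except the $g_{0,0}$ one gives the claimed lower bound: $|X|^2$ times $\binom{d+k-1}{d-1}^{-1}$ when $k=l$, and $0$ otherwise.

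For the equality claim, the forward direction is immediate from Lemma \ref{lem:designchar}(iii): if $X$ is a $\cT$-design then $\sum_{x,y\in X} g_{k',l'}(x^*y)=0$ for every $(0,0)\neq(k',l')\in\cT$, and since $\cT$ is a lower set every $(k-i,l-i)$ appearing in the expansion for $(k,l)\in\cT$ belongs to $\cT$, so all nontrivial terms vanish. Conversely, assuming equality for all $(k,l)\in\cT$, I would exploit that all summands in the expansion are nonnegative and sum to exactly the lower bound; hence every nontrivial summand is zero, and in particular the $i=0$ contribution $c_0\sum_{x,y\in X} g_{k,l}(x^*y)$ is zero. Since $c_0$ is a ratio of the strictly positive leading coefficients of $g_{k,0}g_{0,l}$ and $g_{k,l}$ it is strictly positive, so $\sum_{x,y\in X} g_{k,l}(x^*y)=0$ for every $(0,0)\neq(k,l)\in\cT$. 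Lemma \ref{lem:designchar}(iii) then gives that $X$ is a $\cT$-design.

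The main obstacle is purely bookkeeping---carrying the constants through the product expansion to confirm that $c_{\min\{k,l\}}$ reproduces $\binom{d+k-1}{d-1}^{-1}\de_{k,l}$ exactly, and verifying that $c_0>0$ so that the equality hypothesis really does isolate the top Jacobi coefficient. Both facts follow directly from the product formula and the explicit value of $q_{k+k'}$ stated in the excerpt.
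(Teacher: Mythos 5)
Your proof is correct, but it takes a genuinely different route from the paper's. The paper argues via second moments of tensors: it sets $S_{k,l} = \frac{1}{|X|}\sum_{x\in X} x^{\otimes k}\otimes\overline{x}^{\otimes l} - \int_{\Om(d)} x^{\otimes k}\otimes\overline{x}^{\otimes l}\dd x$ and expands $S_{k,l}^*S_{k,l}\geq 0$, evaluating the cross term $\int_{\Om(d)}(x^*a)^k(a^*x)^l\dd x$ by the Rudin integral from Section~2; equality then holds if and only if $S_{k,l}=0$, i.e.\ $X$ is $(k,l)$-regular, which gives the design characterization directly for each pair. You instead expand the monomial $x^k\overline{x}^l = g_{k,0}(x)g_{0,l}(x)/(m_{k,0}m_{0,l})$ into Jacobi polynomials with nonnegative coefficients and invoke the positivity of the zonal sums \eqref{eqn:zonalsum} together with Lemma~\ref{lem:designchar}(iii). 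Your bookkeeping checks out: $g_{k,0}=m_{k,0}x^k$ and $g_{0,l}=m_{0,l}\overline{x}^l$, the $g_{0,0}$-coefficient is $\de_{k,l}m_{k,0}^{-1}=\de_{k,l}\binom{d+k-1}{d-1}^{-1}$, and $c_0>0$ by comparing leading coefficients, so the equality case does isolate $\sum_{x,y}g_{k,l}(x^*y)=0$. (Note that the constant you obtain, $\binom{d+k-1}{d-1}^{-1}=\binom{d+k-1}{k}^{-1}$, agrees with the integral computed in Section~2; the exponent $k-1$ in the statement, which also appears in the paper's own proof, is evidently a typo.) The paper's approach is more elementary and yields the stronger per-pair statement that equality at a single $(k,l)$ is equivalent to $(k,l)$-regularity, without needing the lower-set hypothesis at all; your approach makes explicit the link to the zonal-sum characterization and to the positive-definiteness machinery used for the LP bounds later in the paper, at the cost of needing the lower-set hypothesis in the forward direction so that each $(k-i,l-i)$ lies in $\cT$.
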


\begin{proof} Consider
\[
S_{k,l} := \frac{1}{|X|} \sum_{x \in X} x^{\otimes k}\otimes \overline{x}^{\otimes l} - \int_{\Om(d)} x^{\otimes k}\otimes \overline{x}^{\otimes l} \dd x.
\]
Then 
\[
S_{k,l}^*S_{k,l} = \frac{1}{|X|^2} \sum_{x,y \in X} (x^*y)^k(y^*x)^l -
\int_{\Om(d)} (x^*a)^k(a^*x)^l \dd x \geq 0,
\]
for $a \in \Om(d)$. It follows that
\[
\frac{1}{|X|^2} \sum_{x,y \in X} (x^*y)^k(y^*x)^l \geq \int_{\Om(d)} (x^*e_1)^k(e_1^*x)^l \dd x = \de_{k,l} {d+k-1 \choose k-1}^{-1}.
\]
Equality holds if and only if $S_{k,l} = 0$, which occurs if and only if $X$ is a $(k,l)$-regular set. 
\end{proof}

\subsection*{Real spherical and complex projective designs}

Complex spherical $\cT$-designs are closely related to both complex projective designs and real spherical designs. Given a finite set of points $L$ on a complex unit sphere, we let $P(L)$ denote the corresponding set of points in projective space (that is, the $1$-dimensional subspaces spanned by the unit vectors). Assume the points in $L$ span distinct $1$-dimensional subspaces. Then $P(L)$ is a \defn{complex projective $t$-design} if $L$ is $(t,t)$-regular~\cite{dgs2}.

\begin{lemma}\label{lem:projectivedesign}
Let $L$ be a set of points in the complex unit sphere such that $\abs{x^*y} < 1$ for all $x,y \in L$, and let $X$ be an $n$-antipodal cover of $L$. 
\begin{enumerate}[(i)]
\item 
If $X$ is a complex spherical $\cT$-design, then $P(L)$ is a projective $t$-design, where $t = \max\{k : (k,k) \in \cT\}$. 
\item If $P(L)$ is a projective $t$-design, and $n > 1$ is coprime to $\{2,3,\ldots,t\}$, then $X$ is a spherical $\cT$-design where $\cT = \cl(\{(t,t)\})$.
\end{enumerate}
\end{lemma}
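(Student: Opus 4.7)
The plan for both parts is to exploit the antipodal structure of $X = \bigcup_{i=0}^{n-1} \om^i L$. If $f \in \Hom(k,l)$, then homogeneity gives $f(\om^i x) = \om^{i(k-l)} f(x)$, so
\[
\sum_{x \in X} f(x) = \sum_{x \in L} f(x) \cdot \sum_{i=0}^{n-1} \om^{i(k-l)}.
\]
The inner character sum is $n$ when $k \equiv l \pmod{n}$ and $0$ otherwise. This single identity drives both directions.

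For part~(i), fix $(k,k) \in \cT$ and $f \in \Hom(k,k)$; the character sum equals $n$ and $|X| = n|L|$, so the $X$-average of $f$ coincides with the $L$-average of $f$. Since $X$ is a $\cT$-design, this average matches $\int_{\Om(d)} f \dd z$, so $L$ is $(k,k)$-regular. Specializing to $k = t := \max\{k : (k,k) \in \cT\}$ shows $P(L)$ is a complex projective $t$-design.

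For part~(ii), I would verify $(k,l)$-regularity of $X$ for every $(k,l)$ in $\cT = \cl(\{(t,t)\}) = \{(k,l) : k,l \le t\}$, splitting into two cases. If $k = l \le t$, I first lift from $\Hom(k,k)$ to $\Hom(t,t)$ using $z^*z \equiv 1$ on $\Om(d)$: for any $g \in \Hom(k,k)$, the polynomial $g(z)(z^*z)^{t-k}$ lies in $\Hom(t,t)$ and agrees with $g$ on $\Om(d)$, so $(t,t)$-regularity of $L$ (i.e.\ the projective $t$-design hypothesis) implies $(k,k)$-regularity of $L$; the character-sum identity above then transfers this to $X$. If instead $k \ne l$, the coprimality assumption forces $n > t$ (otherwise $n \in \{2,\ldots,t\}$ would share a common factor with itself), hence $n > |k-l| \ge 1$, so $k \not\equiv l \pmod{n}$. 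Lemma~\ref{lem:avgantipodal} now yields $\sum_{x \in X} f(x) = 0$, which matches $\int_{\Om(d)} f \dd z = 0$ since $k \ne l$.

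The main obstacle is just the bookkeeping in part~(ii): checking that the coprimality hypothesis on $n$ is exactly what is needed so that the off-diagonal pieces $\Hom(k,l)$ with $k \ne l$ are killed by the character sum, while the diagonal pieces $\Hom(k,k)$ are handled by the projective design hypothesis together with the reduction $z^*z = 1$. Neither step requires new ideas beyond the definitions and Lemma~\ref{lem:avgantipodal}, so the argument should be short once the two cases are set up cleanly.
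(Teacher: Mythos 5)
Your proposal is correct and follows essentially the same route as the paper: part (i) uses $f(\om x)=f(x)$ for $f\in\Hom(t,t)$ to pass the average from $X$ to $L$, and part (ii) combines the containment $\Hom(k,k)\sbs\Hom(t,t)$ (via $z^*z=1$) for the diagonal case with Lemma~\ref{lem:avgantipodal} for $k\neq l$, the coprimality hypothesis guaranteeing $n\nmid(k-l)$. The only cosmetic difference is that you derive the intermediate fact $n>t$, whereas the paper invokes coprimality directly; both give $k\not\equiv l\pmod n$.
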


\begin{proof}
\begin{enumerate}[(i)]
\item
Let $\om$ be an $n$-th root of unity. Then for every $f \in \Hom(t,t)$ and $x \in \Om(d)$, $f(\om x) = f(x)$. Therefore, since $X$ is $(t,t)$-regular, so is $L$, and so $P(L)$ is a projective $t$-design.
\item 
If $P(L)$ is a $t$-design, then $L$ is $(t,t)$-regular and so is $X$. Thus $X$ is $(k,k)$-regular for every $0 \leq k \leq t$. Next, suppose $k \neq l$ and $0 \leq k,l \leq t$. Since $X$ is $n$-antipodal and $n$ does not divide $k-l$, Lemma \ref{lem:avgantipodal} implies that $X$ is $(k,l)$ regular. So $X$ is $(k,l)$ regular for all $k,l \leq t$ and is therefore a $\cl\{(t,t)\}$-design.
\end{enumerate}
\end{proof}
 
\begin{example}
Let $L$ be the standard basis for $\cx^d$. Since $P(L)$ is a complex projective $1$-design, $L$ is $(1,1)$-regular and $(0,0)$-regular. However, $L$ is not $(1,0)$-regular. Letting $X = L \cup -L$, we find that $X$ is $(1,0)$-regular (and $(0,1)$-regular), so $X$ is a $\cl\{(1,1)\}$-design.
\end{example}

Lemma \ref{lem:projectivedesign} emphasizes the main difference between projective and spherical designs. In a projective design, the points are $1$-dimensional subspaces rather than unit vectors, so it makes little sense to consider two different vectors spanning the same 1-dimensional space. In later sections, we will give examples of spherical designs which are not simply covers of projective designs.

Let $S^{d-1}$ denote the real unit sphere in $\re^d$.
A \defn{real spherical $t$-design} is a finite set of points $Y \sbs S^{d-1}$ such that any polynomial of degree at most $t$ in the coordinates of $x \in \re^d$ has the same average over $Y$ as it does over $S^{d-1}$.  
Like complex spherical designs, real designs may be characterized using orthogonal polynomials, called \defn{Gegenbauer polynomials} $Q_{d,k}(x)$ defined recursively as follows;
\begin{align}\label{Gegenbauer}
Q_{d,0}(x)&:=1,\quad Q_{d,1}(x):=dx,\\
 Q_{d,k+1}(x)&:=\tfrac{1}{\lambda_{d,k+1}}(xQ_{d,k}(x)+(\lambda_{d,k-1}-1)Q_{d,k-1}(x)),
\end{align}
where $\lambda_{d,k}=\tfrac{k}{d+2k-2}$.
A finite set $Y \sbs S^{d-1}$ is a real spherical $t$-design if and only if, for every $0 \leq k \leq t$, 
\[
\sum_{x,y \in Y} Q_{d,i}(x^Ty) = 0.
\]
For even dimensions, the Gegenbauer polynomials satisfy
\begin{equation}\label{eqn:poly}
Q_{2d,k}(\Re(x)) = \sum_{i=0}^k g_{d,i,k-i}(x).
\end{equation}
Delsarte et al.~\cite{dgs} proved that if $X$ is a real spherical $t$-design in $S^{d-1}$ for a positive integer $t$, then 
\begin{equation}\label{eqn:realbound}
|X| \geq \binom{d+t/2-1}{t/2}+\binom{d+t/2-2}{t/2-1},
\end{equation}
if $t$ is even, and
\begin{equation}\label{eqn:realbound2}
|X| \geq 2\binom{d+(t-1)/2-1}{(t-1)/2},
\end{equation}
if $t$ is odd.
If equality holds, then the angles $\{x^Ty : x, y \in X, x \neq \pm y\}$ are all roots of the polynomial $Q_{d,\lfloor t/2\rfloor}(x)$.

To relate real and complex spherical designs, define the following map $\phi: \cx^d \rightarrow \re^{2d}$:
\begin{equation}\label{eqn:phi}
\phi(x_1,\ldots,x_d) = (\Re(x_1),\Im(x_1),\ldots,\Re(x_d),\Im(x_d)).
\end{equation}
For $x$ and $y$ in $\cx^d$, $\phi(x)^T\phi(y) = \Re(x^*y)$. It follows that $\phi$ maps points in $\Om(d)$ to points in $S^{2d-1}$.

\begin{lemma}\label{lem:realdesign}
Let $X$ be a complex spherical $\cT$-design in $\Om(d)$, and $t$ a positive integer.
The following are equivalent.
\begin{itemize}
\item[(i)] A lower set $\cT$ contains $\{(k,l) \in \nn^2 : k+l \leq t\}$.
\item[(ii)] The set $\phi(X)$ is a real spherical $t$-design in $S^{2d-1}$.
\end{itemize}
\end{lemma}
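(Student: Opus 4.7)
The plan is to exploit the identity \eqref{eqn:poly}, which expresses each Gegenbauer polynomial $Q_{2d,k}$ evaluated at a real part $\Re(x)$ as a sum of complex Jacobi polynomials indexed by the antidiagonal $\{(i,k-i) : 0 \le i \le k\}$. Combined with the fact $\phi(x)^T\phi(y) = \Re(x^*y)$ and the two design characterizations — Lemma~\ref{lem:designchar}(iii) on the complex side, and the Gegenbauer vanishing condition $\sum_{u,v \in \phi(X)} Q_{2d,k}(u^T v) = 0$ for $1 \le k \le t$ on the real side — this reduces the lemma to a simple bookkeeping identity, once nonnegativity of each summand is used in the converse direction.

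For (i) $\Rightarrow$ (ii), assume $\cT \supseteq \{(k,l) : k+l \le t\}$. For each $1 \le k \le t$ write
\[
\sum_{u,v \in \phi(X)} Q_{2d,k}(u^T v) \;=\; \sum_{x,y \in X} Q_{2d,k}(\Re(x^*y)) \;=\; \sum_{i=0}^{k} \sum_{x,y \in X} g_{i,k-i}(x^*y),
\]
using \eqref{eqn:poly}. Since $k \ge 1$, each index $(i,k-i)$ is a nonzero element of $\cT$, and Lemma~\ref{lem:designchar}(iii) forces each inner sum to vanish. Thus $\phi(X)$ satisfies the Gegenbauer vanishing condition for $1 \le k \le t$ and is a real spherical $t$-design in $S^{2d-1}$.

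For (ii) $\Rightarrow$ (i), the same identity yields $\sum_{i=0}^{k} \sum_{x,y \in X} g_{i,k-i}(x^*y) = 0$ for each $1 \le k \le t$. The crucial additional ingredient is the positive semidefiniteness established in \eqref{eqn:zonalsum}, which guarantees $\sum_{x,y \in X} g_{i,k-i}(x^*y) \ge 0$ term by term. A vanishing sum of nonnegative quantities forces each summand to be zero, so $\sum_{x,y \in X} g_{k,l}(x^*y) = 0$ for every $(k,l) \ne (0,0)$ with $k+l \le t$. By Lemma~\ref{lem:designchar}(iii), $X$ is then a design with respect to the full antidiagonal, so the lower set $\cT$ defining $X$ as a design may be taken to contain $\{(k,l) : k+l \le t\}$, yielding (i).

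Neither direction is technically hard, because both main tools — the polynomial identity \eqref{eqn:poly} and the positive semidefiniteness \eqref{eqn:zonalsum} — are already in place. The only delicate point is the converse: without the termwise nonnegativity of the inner sums, the scalar Gegenbauer condition would only control a weighted combination of $\sum_{x,y} g_{i,k-i}(x^*y)$, and one could not isolate each bidegree. Nonnegativity is precisely what permits the inversion, and this is where the argument hinges.
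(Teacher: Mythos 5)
Your proof is correct and follows essentially the same route as the paper: both use the identity \eqref{eqn:poly} to convert Gegenbauer sums over $\phi(X)$ into sums of the $g_{i,k-i}$ over $X$, and both invoke the design characterizations on each side. The only difference is that you explicitly justify the key step in the converse direction via the positive semidefiniteness \eqref{eqn:zonalsum}, which the paper's chain of equivalences uses implicitly without citation --- a worthwhile clarification, but not a different argument.
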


\begin{proof}
Since $X$ is a $\cT$-design, by Lemma \ref{lem:designchar} we have $\sum_{x,y \in X} g_{d,k,l}(x^*y) = 0$ for every $k$ and $l$ with $(k,l) \in \cT$. So, for a positive integer $i$,
\begin{align*}
\sum_{x,y\in X} Q_{2d,i}(\phi(x)^T\phi(y))=0
\Leftrightarrow &  \sum_{x,y\in X} Q_{2d,i} (\Re(x^*y))=0 \\
\Leftrightarrow &  \sum_{k=0}^i \sum_{x,y\in X} g_{d,k,i-k}(x^*y)=0 \\
\Leftrightarrow & \sum_{x,y\in X} g_{d,k,i-k}(x^*y)= 0 \text{ for } 0\leq k\leq i.
\end{align*}
From characterizations of real and complex spherical designs, (i) and (ii) are equivalent. 
\end{proof}

\section{Complex spherical codes and absolute bounds}\label{sec:complexcode}

In this section, we introduce complex spherical codes, which are in some ways dual to complex spherical designs, and give some simple bounds on the size of both. 

Given $X \sbs \Om(d)$, we define the \defn{inner product set} of $X$ to be 
\[
A(X) := \{a^*b : a, b \in X, a \neq b \}.
\]
A polynomial $F(x)\in\re[x,\co{x}]$ is said to be an \defn{annihilator polynomial} of $X$ if 
$F(\al)=0$ for each $\al\in A(X)$ and $F(1)$ is positive. 

\begin{definition}
We say $X$ is a \defn{complex spherical code of degree $s$} if $|A(X)| = s$.
For a lower set $\cS$ in $\nn^2$,
a finite subset $X$ in $\Om(d)$ is said to be an \defn{$\cS$-code} if $X$ has an annihilator polynomial in the span of $\{x^k\co{x}^l: (k,l) \in \cS\}$.
\end{definition}
Let $X$ be a finite set in $\Om(d)$ with inner product set $A(X)=\{\al_1,\ldots,\al_s\}$.
Set $\al_0=1$.
For $0 \leq i \leq s$, we let $A_i$ denote the $(0,1)$-matrix whose rows and columns are indexed by $X$ such that $(A_i)_{xy}=1$ if $x^*y=\al_i$ and $(A_i)_{xy}=0$ otherwise.

If $X$ is a complex spherical code of degree $s$, then $F(x)=\prod_{\al \in A(X)}(x-\al)$ is an annihilator polynomial of $X$ and consequently $X$ is an $\cS$-code with $\cS=\cl\{(s,0)\}$.
However, in many cases, we can choose a smaller lower set $\cS$ depending on the elements of $A(X)$ For example, if each of the $s$ elements of $A(X)$ have the same absolute value $|\al|$, then $F(X) = x\bar{x} - |\al|^2$ is an annihilator polynomial and $X$ is an $\cS$-code with $\cS=\cl\{(1,1)\}$.

Complex spherical codes are closely related to other types of codes. A \defn{real spherical code of degree $s$} is a set $X$ of points on the real unit sphere whose inner product set $\{a^Tb : a, b \in X, a \neq b \}$ has size $s$ (see \cite{dgs2}). If $X$ is a complex spherical code of degree $s$ in $\Om(d)$, and $\phi: \cx^d \rightarrow \re^{2d}$ is the function in \eqref{eqn:phi}, then $\phi(X)$ is a real spherical code of degree at most $s$. Conversely, if $X$ is a real spherical code of degree $s$ in $\re^d$, then it is also complex spherical code of degree $s$ in $\Om(d)$ under the natural embedding $\re^d \hookrightarrow \cx^d$.

Likewise, a \defn{complex projective code of degree $s$} is a set $P$ of projection matrices for one-dimensional subspaces in $\cx^d$ whose inner product set $\{\tr(P_xP_y): P_x,P_y \in P, x \neq y\}$ has size $s$ \cite{dgs}. If $P(X)$ denotes the set of $1$-dimensional complex subspaces spanned by the vectors in $X \sbs \Om(d)$, then $P(X)$ is a complex projective code of degree at most $s$.

We can now give simple lower bounds on the size of a $\cT$-design and upper bounds on the size of on $\cS$-code. These ``absolute" bounds (Theorem~\ref{thm:absbound}) do not depend the values in $A(X)$, unlike the tighter linear programming bounds we will see in the next section. For collections of indices $\cU,\cV\sbs\nn^2$, we define the convolution of $\cU$ and $\cV$ as follows:
\[
\cU*\cV := \{(k+l',k'+l): (k,l)\in\cU,(k',l')\in\cV\}.
\] 

\begin{theorem}\label{thm:absbound}
\begin{itemize}
\item[(i)] If $X$ is a $\cT$-design, and there exists a lower set $\cU \sbs \cT$ such that $\cU * \cU \sbs \cT$, then
\[
|X| \geq \sum_{(k,l) \in \cU} \dim(\Harm(k,l)).
\]
\item[(ii)] If $X$ is an $\cS$-code, then 
\[
|X| \leq \sum_{(k,l)\in\cS}\dim(\Harm(k,l)).
\]
\end{itemize}
\end{theorem}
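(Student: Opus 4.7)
The two parts of Theorem~\ref{thm:absbound} are separate Delsarte-type arguments, which I would handle in sequence.

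For (i), the plan is the standard characteristic-matrix stacking argument. Form the matrix $H_\cU$ obtained by horizontally concatenating the blocks $H_{k,l}$ for all $(k,l) \in \cU$; it has $|X|$ rows and $N := \sum_{(k,l)\in\cU} m_{k,l}$ columns. The hypothesis $\cU * \cU \sbs \cT$ is precisely what Lemma~\ref{lem:designchar}(ii) needs to conclude $H_{k,l}^*H_{k',l'} = |X|\de_{k,k'}\de_{l,l'}I$ for every pair $(k,l),(k',l')$ in $\cU$, because $(k+l', k'+l) \in \cU*\cU \sbs \cT$. Assembling these block identities gives $H_\cU^* H_\cU = |X| I_N$, so the columns of $H_\cU$ are linearly independent and $N \leq |X|$, which is exactly the bound claimed.

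For (ii), let $F(x) = \sum_{(k,l)\in\cS} c_{k,l}\, x^k \co{x}^l$ be an annihilator polynomial of $X$ supported on $\cS$ with $F(1) > 0$, and for each $a \in X$ define $f_a \colon \Om(d)\to \cx$ by $f_a(z) := F(a^*z)$. Because $a^*b \in A(X)$ whenever $a \neq b$, evaluation at points of $X$ gives $f_a(b) = F(1)\de_{a,b}$, so testing a relation $\sum_{a\in X} \gamma_a f_a = 0$ at each $b \in X$ shows that the family $\{f_a : a \in X\}$ is linearly independent. The next step is to embed all of these functions into a common finite-dimensional space. As a function of $z$, each monomial $(a^*z)^k(z^*a)^l$ lies in $\Hom(k,l)$, so $f_a$ lies in $\sum_{(k,l)\in\cS} \Hom(k,l)$. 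Using the harmonic decomposition $\Hom(k,l) = \bigoplus_{i=0}^{\min\{k,l\}} \Harm(k-i,l-i)$ together with the hypothesis that $\cS$ is a lower set, every harmonic summand appearing on the right-hand side is of the form $\Harm(k',l')$ with $(k',l') \in \cS$. Hence
\[
\sum_{(k,l)\in\cS} \Hom(k,l) = \bigoplus_{(k,l)\in\cS} \Harm(k,l),
\]
where the right-hand side is a genuine direct sum of subspaces since distinct harmonic spaces are mutually orthogonal. Its dimension is $\sum_{(k,l)\in\cS} m_{k,l}$, and combined with linear independence of the $f_a$ this yields the bound.

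The delicate point is in (ii): the naive embedding would only give $|X| \leq \sum_{(k,l)\in\cS} \dim \Hom(k,l) = \sum \binom{d+k-1}{d-1}\binom{d+l-1}{d-1}$, which is strictly weaker than what is claimed. Sharpening this to the harmonic dimensions depends crucially on the lower-set hypothesis for $\cS$, which ensures that the overlapping $\Hom$ subspaces collapse exactly onto the direct sum of the harmonic pieces indexed by $\cS$. Once this identification is in hand, both parts are essentially two different instantiations of the same orthogonality-plus-dimension-counting principle that underpins Delsarte's absolute bounds.
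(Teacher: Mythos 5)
Your proposal is correct and follows essentially the same route as the paper: part (i) is the paper's orthonormal-basis argument for $\bigoplus_{(k,l)\in\cU}\Harm(k,l)$ recast in terms of the stacked characteristic matrix (the identity $H_\cU^*H_\cU = |X|I$ is exactly the statement that the basis functions remain orthonormal under $\ip{\cdot}{\cdot}_X$, which the paper derives directly from the design property on $\overline{S_i}S_j$), and part (ii) is the paper's annihilator argument with the $f_a$, where you have usefully made explicit the step the paper leaves implicit, namely that the lower-set hypothesis on $\cS$ collapses $\sum_{(k,l)\in\cS}\Hom(k,l)$ onto $\bigoplus_{(k,l)\in\cS}\Harm(k,l)$.
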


\begin{proof}
\begin{itemize}
\item[(i)] Let $S_1,\ldots,S_N$ be an orthonormal basis for $\bigoplus_{(k,l) \in \cU} \Harm(k,l)$. Then $\overline{S_i}S_j$ is in $\bigoplus_{(k,l) \in \cT} \Harm(k,l)$, and so 
\[
\frac{1}{|X|}\sum_{x \in X} \overline{S_i(x)}S_j(x) = \ip{1}{\overline{S_i}S_j} = \ip{S_i}{S_j} = \de_{i,j}.
\]
But note that 
\[
\ip{f}{g}_X := \frac{1}{|X|}\sum_{x \in X} \overline{f(x)}g(x)
\]
defines an inner product on functions on $X$, so $S_i$ and $S_j$ are orthogonal as functions on $X$. Therefore the dimension of the space of functions on $X$ is at least the dimension of $\bigoplus_{(k,l) \in \cU} \Harm(k,l)$.
\item[(ii)] Let $F(x) \in \spn\{x^k\co{x}^l: (k,l) \in \cS\}$ be the annihilator of $X$. For a point $a \in X$, define a function $f_a: \Om(d) \rightarrow \cx$ called the \defn{annihilator of $X$ at  $a$} as follows:
\[
f_a(z) = F(a^*z).
\]
Since $f_a(a) \neq 0$ and $f_a(b) = 0$ for all $b \neq a$ in $X$, it follows that $\{f_a : a \in X\}$ is a linearly independent set of functions on $X$. Moreover, since each $f_a$ is in $\bigoplus_{(k,l)\in\cS}\Harm(k,l)$, the size of the set (namely $|X|$) is at most the dimension of the space $\bigoplus_{(k,l)\in\cS}\Harm(k,l)$.
\end{itemize}
\end{proof}

\begin{corollary}
If $X \sbs \Om(d)$ has degree $s$, then
\[
|X| \leq \dim(\Hom(s,0)) = {d+s-1 \choose d-1}.
\]
\end{corollary}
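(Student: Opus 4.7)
My plan is to apply Theorem \ref{thm:absbound}(ii) to the natural annihilator polynomial of $X$. The paragraph immediately preceding the corollary already observes that $F(x) = \prod_{\alpha \in A(X)}(x-\alpha)$ lies in $\spn\{x^k\bar{x}^l : (k,l) \in \cS\}$ for $\cS = \cl\{(s,0)\} = \{(k,0) : 0 \leq k \leq s\}$; the only verification I would add is that $F$ is a genuine annihilator, i.e.\ $F(1) \neq 0$. This holds because $F(1) = \prod_{\alpha}(1-\alpha)$, and Cauchy--Schwarz forces $|a^*b|<1$ for any two distinct unit vectors $a, b \in \Om(d)$, so $1 \notin A(X)$ and every factor is nonzero. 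With $F$ in hand, $X$ is certified as an $\cS$-code.

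Invoking Theorem \ref{thm:absbound}(ii) then yields
\[
|X| \leq \sum_{(k,l) \in \cS} \dim(\Harm(k,l)) = \sum_{k=0}^{s} \dim(\Harm(k,0)).
\]
Since the harmonic decomposition $\Hom(k,l) = \bigoplus_{i=0}^{\min\{k,l\}} \Harm(k-i, l-i)$ collapses to a single summand when $l=0$ (the Laplacian $\De = \sum_i \partial^2/\partial z_i\,\partial\overline{z_i}$ automatically annihilates any polynomial with no $\bar z$-dependence), we have $\Harm(k,0) = \Hom(k,0)$ and thus $\dim(\Harm(k,0)) = \binom{d+k-1}{d-1}$.

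The only remaining step is to convert $\sum_{k=0}^s \binom{d+k-1}{d-1}$ into the stated value $\dim(\Hom(s,0)) = \binom{d+s-1}{d-1}$ by a binomial bookkeeping calculation. I expect this final identification to be the main piece of arithmetic to sort out carefully; no new conceptual idea beyond Theorem \ref{thm:absbound}(ii) is needed. The whole proof is essentially the observation that the canonical annihilator $\prod_\alpha (x-\alpha)$ is purely holomorphic in $x$, so its bidegree support is confined to the horizontal axis in $\nn^2$.
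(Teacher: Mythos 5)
Your setup is exactly the intended one: the canonical annihilator $F(x)=\prod_{\al\in A(X)}(x-\al)$ is holomorphic of degree $s$, Cauchy--Schwarz guarantees $1\notin A(X)$ so every factor of $F(1)$ is nonzero (and $F(1)>0$ since $A(X)$ is closed under conjugation), and Theorem~\ref{thm:absbound}(ii) with $\cS=\cl\{(s,0)\}$ gives $|X|\leq\sum_{k=0}^{s}\dim(\Harm(k,0))=\sum_{k=0}^{s}\binom{d+k-1}{d-1}$. Up to that point you are on the same track as the paper, and your observation that $\Harm(k,0)=\Hom(k,0)$ is correct.

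The gap is the step you deferred as ``binomial bookkeeping'': it is not bookkeeping, and it fails. By the hockey-stick identity, $\sum_{k=0}^{s}\binom{d+k-1}{d-1}=\binom{d+s}{d}$, which is strictly larger than $\binom{d+s-1}{d-1}$ for every $s\geq 1$. Nor can the sum be collapsed to the single term $\dim(\Hom(s,0))$: on the complex sphere the relation $z^*z=1$ only gives $\Hom(k,l)\sbs\Hom(k+1,l+1)$, so unlike the real case (where $\Hom(k)\sbs\Hom(k+2)$ lets one absorb all lower degrees into the top two homogeneous pieces) the purely holomorphic spaces $\Hom(0,0),\ldots,\Hom(s,0)$ are mutually orthogonal as functions on $\Om(d)$ and all of them contribute. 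So your argument establishes $|X|\leq\binom{d+s}{d}$, not the stated $\binom{d+s-1}{d-1}$. In fact no repair is possible, because the stated bound is too strong: three unit vectors in $\Om(2)$ with all pairwise inner products equal to $-\tfrac12$ (their Gram matrix $\tfrac32 I-\tfrac12 J$ is positive semidefinite of rank $2$) form a degree-$1$ code of size $3>\binom{2}{1}=2$, while $3=\binom{3}{2}$ exactly matches the bound your argument actually delivers. The defect here lies in the corollary's statement rather than in your strategy, but as written your proposal does not prove the claim it sets out to prove.
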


\section{LP Bounds}\label{sec:bounds}

In this section we show how the linear programming technique of Delsarte~\cite{del} applies to complex spherical designs and codes. The resulting ``relative" bounds  (Theorem~\ref{thm:relbound}) on the size of $X$ depend the values in $A(X)$ and are generally better than the absolute bounds in Theorem~\ref{thm:absbound}. We also show that in the case of equality in Theorem~\ref{thm:absbound}, codes and designs coincide, and give examples of tightness.

Recall that $H_{k,l}$ denotes the $(k,l)$-characteristic matrix of $X$ defined in Section \ref{sec:designs}. 

\begin{lemma}\label{polymatrix}
Let $X$ be a finite nonempty subset in $\Om(d)$ with inner product set $A(X)=\{\al_1,\ldots,\al_s\}$.
For any polynomial $F(x)=\sum_{(k,l)\in\nn^2}f_{k,l}g_{k,l}(x)$, 
$$f_{0,0}\abs{X}^2+\sum_{(k,l)\in\nn^2\setminus\{(0,0)\}}f_{k,l}||H_{k,l}^*H_{0,0}||=F(1)|X| + \sum_{i=1}^sF(\al_i)d_{i},$$
where $d_{i}=|\{(x,y)\in X^2:\ip{x}{y}=\al_i\}|$. 
\end{lemma}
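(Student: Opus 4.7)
The plan is to compute the double sum $\sum_{x,y\in X} F(x^*y)$ in two different ways and equate the results.

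For the right-hand side, I would partition the pairs $(x,y) \in X \times X$ according to the value of $\ip{x}{y}$. The $|X|$ diagonal pairs each satisfy $x^*x = 1$ and contribute $|X|F(1)$ in total. For each $i = 1,\dots,s$, the off-diagonal pairs with $\ip{x}{y}=\al_i$ number $d_i$ (since no off-diagonal pair can have inner product equal to $\al_0 = 1$ by Cauchy--Schwarz) and contribute $d_i F(\al_i)$. Summing gives $\sum_{x,y\in X} F(x^*y) = |X|F(1) + \sum_{i=1}^s d_i F(\al_i)$.

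For the left-hand side, I would expand $F$ in the Jacobi basis, $F = \sum_{(k,l)} f_{k,l}\, g_{k,l}$, so that
$$\sum_{x,y\in X} F(x^*y) = \sum_{(k,l)\in\nn^2} f_{k,l} \sum_{x,y\in X} g_{k,l}(x^*y).$$
The $(k,l)=(0,0)$ term contributes $f_{0,0}|X|^2$ because $g_{0,0}\equiv 1$. For $(k,l)\ne(0,0)$, Koornwinder's addition theorem (Theorem \ref{thm:addition}) expresses the inner sum as
$$\sum_{x,y\in X} g_{k,l}(x^*y) = \sum_{i=1}^{m_{k,l}} \sum_{x,y\in X} \overline{e_i(x)} e_i(y) = \sum_{i=1}^{m_{k,l}} \Bigl|\sum_{x\in X} \overline{e_i(x)}\Bigr|^2.$$
The key observation is that this quantity equals the squared norm of the column vector $H_{k,l}^* H_{0,0}$: because the inner product is normalized so that $\int 1 \dd z = 1$, the constant function $1$ is an orthonormal basis of $\Harm(0,0)$, so the characteristic matrix $H_{0,0}$ is literally the all-ones column indexed by $X$, and therefore $(H_{k,l}^* H_{0,0})_i = \sum_{x\in X}\overline{e_i(x)}$.

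Equating both expressions for $\sum_{x,y\in X} F(x^*y)$ and isolating the $(0,0)$ contribution on the left yields the stated identity (interpreting $\|H_{k,l}^*H_{0,0}\|$ in the displayed formula as the squared Euclidean norm). The reasoning is essentially the standard Delsarte double-counting underlying all linear-programming bounds, and I do not anticipate a real obstacle; the only care needed is in identifying $H_{0,0}$ with the all-ones vector, which is immediate once the Haar-measure normalization is recalled.
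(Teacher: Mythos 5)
Your proof is correct and is essentially the paper's argument: the paper obtains the same identity by summing all entries of $\sum_{(k,l)}f_{k,l}H_{k,l}H_{k,l}^* = F(1)I+\sum_{i}F(\al_i)A_i$ (sandwiching with the all-ones vector $H_{0,0}$), which is exactly your double-count of $\sum_{x,y\in X}F(x^*y)$ written in matrix form. Your reading of $\|H_{k,l}^*H_{0,0}\|$ as the squared Euclidean norm, and your identification of $H_{0,0}$ with the all-ones column, both match the paper.
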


\begin{proof}
From the Addition Formula in Theorem~\ref{thm:addition} we get the following equation for each $(k,l) \in \nn^2$:
\[
H_{k,l}H_{k,l}^*=\sum_{i=0}^sg_{k,l}(\al_i)A_{i}.
\]
Summing over all $(k,l)$,
\begin{equation}\label{eqn:polymatrix}
\sum_{(k,l)\in\nn^2}f_{k,l}H_{k,l}H_{k,l}^*=F(1)I +\sum_{i=1}^sF(\al_i)A_{i}.
\end{equation}
Taking the sum of all entries in the above equation, we obtain
\begin{align*}
F(1)|X| + \sum_{i=1}^sF(\al_i)d_{i}
&=\sum_{(k,l)\in\nn^2}f_{k,l}H_{0,0}^*H_{k,l}H_{k,l}^*H_{0,0} \\
&=\sum_{(k,l)\in\nn^2}f_{k,l}||H_{k,l}^*H_{0,0}|| \\
&=f_{0,0}|X|^2+\sum_{(k,l)\in\nn^2\setminus\{(0,0)\}}f_{k,l}||H_{k,l}^*H_{0,0}||.
\end{align*}
\end{proof}

As two special cases, note that $F(\al) = 0$ if $F(x)$ is an annihilator of $X$, while $||H_{k,l}^*H_{0,0}|| = 0$ if $X$ is a $\cT$-design with $(k,l) \neq (0,0)$ in $\cT$.

\begin{theorem}\label{thm:relbound}
Suppose $X$ is a $\cT$-design and $F(x) = \sum_{k,l} f_{k,l} g_{k,l}(x)$ is a bivariate polynomial such that $f_{0,0} > 0$.
\begin{itemize}
\item[(i)] If $f_{k,l} \leq 0$ whenever $(k,l)$ is not in $\cT$, and $F(\al) \geq 0$ for every $\al \in A(X)$, then
\[
|X| \geq \frac{F(1)}{f_{0,0}}.
\]
\item[(ii)] If $f_{k,l} \geq 0$ for all $k$ and $l$, and $F(\al) \leq 0$ for every $\al \in A(X)$, then
\[
|X| \leq \frac{F(1)}{f_{0,0}}.
\]
\end{itemize}
\end{theorem}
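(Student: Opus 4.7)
The plan is to apply Lemma~\ref{polymatrix} to $F$, yielding the identity
\[
f_{0,0}|X|^2 + \sum_{(k,l) \in \nn^2 \setminus \{(0,0)\}} f_{k,l}\,\|H_{k,l}^* H_{0,0}\| = F(1)|X| + \sum_{i=1}^s F(\al_i)\, d_i,
\]
in which each $\|H_{k,l}^* H_{0,0}\|$ is a nonnegative real (it arises as the scalar $H_{0,0}^* H_{k,l} H_{k,l}^* H_{0,0}$ in the derivation of that lemma). Both bounds will then follow by estimating the two sides under the sign hypotheses of (i) and (ii) and cancelling one factor of $|X|$.

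For part (i), I would split the left-hand sum according to whether $(k,l)$ lies in $\cT$. The design characterization Lemma~\ref{lem:designchar}(ii), applied with $(k',l')=(0,0)$ so that the admissibility condition $(k+l',l+k') = (k,l) \in \cT$ is automatic, gives $H_{k,l}^* H_{0,0} = 0$ for every $(k,l) \in \cT \setminus \{(0,0)\}$, killing those terms. The remaining terms have $(k,l) \notin \cT$, where by hypothesis $f_{k,l} \leq 0$ while $\|H_{k,l}^* H_{0,0}\| \geq 0$, so each contributes nonpositively and the left-hand side is at most $f_{0,0}|X|^2$. On the right, $F(\al_i) \geq 0$ and $d_i \geq 0$ force $\sum_i F(\al_i) d_i \geq 0$, so the right-hand side is at least $F(1)|X|$. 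Chaining these inequalities yields $f_{0,0}|X|^2 \geq F(1)|X|$, and since $|X|$ and $f_{0,0}$ are positive one obtains $|X| \geq F(1)/f_{0,0}$.

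Part (ii) is the symmetric argument and in fact does not use the design hypothesis at all. The assumption $f_{k,l} \geq 0$ for every $(k,l)$, combined with nonnegativity of each $\|H_{k,l}^* H_{0,0}\|$, makes every term in the left-hand sum nonnegative, so the left-hand side is at least $f_{0,0}|X|^2$. The assumption $F(\al_i) \leq 0$ together with $d_i \geq 0$ makes $\sum_i F(\al_i)d_i \leq 0$, so the right-hand side is at most $F(1)|X|$. Dividing by $|X| f_{0,0}$ yields $|X| \leq F(1)/f_{0,0}$.

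There is no serious obstacle: the analytic content is packaged into Lemma~\ref{polymatrix}, and what remains is careful sign accounting together with the one decisive use of Lemma~\ref{lem:designchar}(ii) in part (i) to zero out the $(k,l)\in\cT$ contributions. The only point worth double-checking is that without the design hypothesis in part (i), the terms indexed by $(k,l)\in\cT\setminus\{(0,0)\}$ would not be controlled by the sign of $f_{k,l}$, so that step is essential rather than decorative.
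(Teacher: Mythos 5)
Your proposal is correct and follows essentially the same route as the paper: apply Lemma~\ref{polymatrix}, use the design condition (via Lemma~\ref{lem:designchar}) to kill the terms with $(k,l)\in\cT\setminus\{(0,0)\}$ in part (i), and then do the sign accounting on both sides before cancelling a factor of $|X|$. Your side observations --- that each $\|H_{k,l}^*H_{0,0}\|$ is nonnegative because it is the scalar $H_{0,0}^*H_{k,l}H_{k,l}^*H_{0,0}$, and that part (ii) never uses the design hypothesis --- are both accurate and consistent with the paper's argument.
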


\begin{proof}
\begin{itemize}
\item[(i)] If $(k,l) \neq (0,0)$ is in $\cT$, then $||H_{k,l}^*H_{0,0}|| = 0$. Otherwise, $f_{k,l} \leq 0$. In either case, $f_{k,l}||H_{k,l}^*H_{0,0}|| \leq 0$. Combining this inequality with $F(\al) \geq 0$ for all $\al \in A(X)$, the formula in Lemma \ref{polymatrix} reduces to 
 $$f_{0,0}\abs{X}^2 \geq F(1)\abs{X},$$
from which the result follows.
\item[(ii)] If $f_{k,l} \geq 0$ for all $(k,l)$, then $f_{k,l}||H_{k,l}^*H_{0,0}|| \geq 0$. Combining this inequality with $F(\al) \leq 0$ for all $\al \in A(X)$, the formula in Lemma \ref{polymatrix} reduces to 
 $$f_{0,0}\abs{X}^2 \leq F(1)\abs{X}.$$
\end{itemize}
Note that if equality holds in either case, then $F(\al) = 0$ for all $\al \in A(X)$ and $f_{k,l}||H_{k,l}^*H_{0,0}|| = 0$ for all $(k,l) \neq (0,0)$.
\end{proof}

In fact, there is a slightly more general version of Theorem~\ref{thm:relbound}(ii) which will be useful in investigation tightness of equalities.

\begin{lemma}\label{lem:coefbound}
Let $X$ be an $\cS$-code in $\Om(d)$ with an annihilator polynomial $F(x)=\sum_{(k,l)\in\cS}f_{k,l}g_{k,l}(x)$ such that all $f_{k,l}$ are positive. Then for each $(k,l)\in\cS$,
\[
\abs{X} \leq F(1)/f_{k,l}.
\] 
\end{lemma}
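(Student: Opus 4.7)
The idea is to reduce the assertion to the case $(k,l)=(0,0)$, which is already essentially contained in the proof of Theorem~\ref{thm:relbound}(ii). To move the coefficient $f_{k,l}$ into the $(0,0)$ slot, I would multiply $F$ by a suitable zonal Jacobi polynomial so that the corresponding coefficient becomes the new constant term.

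Concretely, define
\[
H(x) \;:=\; F(x)\,\frac{g_{l,k}(x)}{g_{l,k}(1)}.
\]
I would then verify four properties of $H$: (a) $H(1)=F(1)$, trivially; (b) $H(\al)=0$ for every $\al\in A(X)$, because $F$ is an annihilator of $X$; (c) every Jacobi coefficient $h_{k',l'}$ of $H$ is nonnegative, by Lemma~\ref{coef}(i) applied to the (nonnegative) coefficients $f_{k',l'}$ of $F$ (they are positive for $(k',l')\in\cS$ and zero otherwise); and (d) $h_{0,0}=f_{k,l}$, by Lemma~\ref{coef}(iii) applied with $(m,n)=(l,k)$.

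With $H$ in hand, I would apply Lemma~\ref{polymatrix} directly to $H$ (no design hypothesis on $X$ is needed there). The right-hand side of that identity collapses to $H(1)|X|=F(1)|X|$, since $H(\al_i)=0$ for every $\al_i\in A(X)$. The left-hand side is
\[
h_{0,0}|X|^2 + \sum_{(k',l')\neq (0,0)} h_{k',l'}\,\|H_{k',l'}^*H_{0,0}\|,
\]
and the sum is nonnegative by (c) together with the fact that each $\|H_{k',l'}^*H_{0,0}\|$ is a squared norm. Hence
\[
f_{k,l}\,|X|^2 \;=\; h_{0,0}\,|X|^2 \;\leq\; F(1)\,|X|,
\]
and dividing by $f_{k,l}|X|>0$ gives the claimed inequality $|X|\leq F(1)/f_{k,l}$.

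The only step with any content is the identity $h_{0,0}=f_{k,l}$, which says that in the Jacobi expansion of the product $g_{k',l'}(x)g_{l,k}(x)$ the coefficient of $g_{0,0}$ is nonzero exactly when $(k',l')=(k,l)$; this is built into the convolution rule for products of Jacobi polynomials recorded just before Lemma~\ref{coef}, so it is not really an obstacle. Everything else is bookkeeping, and the argument is a clean generalization of Theorem~\ref{thm:relbound}(ii), which is recovered by taking $(k,l)=(0,0)$.
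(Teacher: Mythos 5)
Your proposal is correct and follows essentially the same route as the paper: the paper likewise multiplies $F$ by $g_{n,m}(x)/g_{n,m}(1)$ for the pair $(m,n)\in\cS$ in question, invokes Lemma~\ref{coef} to get $h_{0,0}=f_{m,n}$ and $h_{k,l}\ge 0$, and then applies Theorem~\ref{thm:relbound}(ii). The only (cosmetic) difference is that you unroll Theorem~\ref{thm:relbound}(ii) by going through Lemma~\ref{polymatrix} directly, which harmlessly sidesteps the design hypothesis in that theorem's statement.
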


\begin{proof}
For any $(m,n)\in\cS$, define the annihilator polynomial 
\[
H(x)=F(x)g_{n,m}(x)/g_{n,m}(1).
\]
Lemma~\ref{coef} implies that $h_{0,0}=f_{m,n}$ and $h_{k,l}\geq0$ for all $(k,l)$.
Hence Theorem~\ref{thm:relbound}(ii) gives $F(1)-\abs{X}f_{m,n}=F(1)-\abs{X}h_{0,0}\geq0$.
\end{proof}

Recall that from Theorem~\ref{thm:absbound} we have the following bounds: 
for a $\cT$-design and $\cS$-code $X$ and a lower set $\cU$ such that $\cU*\cU\sbs\cT$,
$$\sum_{(k,l)\in\cU}\dim(\Harm(k,l))\leq |X|\leq\sum_{(k,l)\in\cS}\dim(\Harm(k,l)).$$

We say that $X$ is a \defn{tight design with respect to $\cU$} if $X$ is a $\cU*\cU$-design and attains equality in the absolute bound for $\cU*\cU$-designs in Theorem~\ref{thm:absbound}(i). 
Similarly, an $\cS$-code $X$ is \defn{tight} if $X$ attains the bound for $\cS$-codes in Theorem~\ref{thm:absbound}(ii).
The definition of tightness for $\cT$-designs with respect to $\cU$ seems to be a complex analogue for tightness of real spherical even designs in a sense. The following theorem shows the equivalence of tightness for designs and codes. 

\begin{theorem}\label{thm:equiv}
Let $X$ be a finite nonempty subset of $\Om(d)$ and let $\cS$ be a lower set. Then the following are equivalent:
\begin{itemize}
\item[(i)] $X$ is an $\cS$-code and a $\cS * \cS$-design.
\item[(ii)] $X$ is a tight $\cS$-code.
\item[(iii)] $X$ is a tight design with respect to $\cS$.
\end{itemize}
\end{theorem}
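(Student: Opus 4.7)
The plan is to close the cycle by proving (i)$\Rightarrow$(ii), (i)$\Rightarrow$(iii), (iii)$\Rightarrow$(i), and (ii)$\Rightarrow$(i), the last being the substantive step. Throughout, write $V = \bigoplus_{(k,l)\in\cS}\Harm(k,l)$, $m_{k,l} = \dim\Harm(k,l)$, and $\mathbb{H} = [H_{k,l}]_{(k,l)\in\cS}$ for the concatenated characteristic matrix of size $|X|\times\dim V$. The two implications from (i) are immediate: the code bound Theorem~\ref{thm:absbound}(ii) and the design bound Theorem~\ref{thm:absbound}(i) (with $\cU = \cS$) both evaluate to $\sum_\cS m_{k,l}$, so any $X$ satisfying (i) realizes both as equalities and is tight in both senses.

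For (iii)$\Rightarrow$(i), tightness makes $\mathbb{H}$ square, and the $\cS*\cS$-design condition via Lemma~\ref{lem:designchar}(ii) gives $\mathbb{H}^*\mathbb{H} = |X|I$, hence $\mathbb{H}\mathbb{H}^* = |X|I$. The $(x,y)$-entry then reads $Q(y^*x) = |X|\de_{xy}$, where $Q(\al) := \sum_{(k,l)\in\cS} g_{k,l}(\al)$; since $Q(1) = \sum_\cS m_{k,l} = |X| > 0$ and $Q$ vanishes on $A(X)$, it is an annihilator in $\spn\{x^k\co{x}^l : (k,l)\in\cS\}$, witnessing the $\cS$-code property.

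The hard part will be (ii)$\Rightarrow$(i). Given an annihilator $F = \sum_\cS f_{k,l} g_{k,l}$, the functions $f_a(z) := F(a^*z)$ satisfy $f_a(b) = F(1)\de_{ab}$ for $b\in X$, so they are linearly independent; tightness upgrades them to a basis of $V$. Expanding $f_a$ in an orthonormal basis of $V$ produces the identity $\mathbb{H} D \mathbb{H}^* = F(1)I$, where $D$ is the diagonal matrix with entries $f_{k,l}$. Taking the Hermitian adjoint of this identity forces $D = D^*$, so $f_{k,l}\in\re$; positive-definiteness of $\mathbb{H}^*\mathbb{H} = F(1)D^{-1}$ then forces each $f_{k,l} > 0$.

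To finish, I first need to show the $f_{k,l}$ are all equal. Combining $F(1) = \sum_\cS m_{k,l}f_{k,l}$ with the trace identity $F(1)\sum_\cS m_{k,l}/f_{k,l} = \tr(\mathbb{H}^*\mathbb{H}) = \tr(\mathbb{H}\mathbb{H}^*) = |X|\sum_\cS m_{k,l} = |X|^2$, Cauchy--Schwarz gives
\[
|X|^2 = \Bigl(\sum_\cS m_{k,l}\Bigr)^2 \leq \Bigl(\sum_\cS m_{k,l}f_{k,l}\Bigr)\Bigl(\sum_\cS m_{k,l}/f_{k,l}\Bigr) = |X|^2,
\]
and equality forces $f_{k,l} = F(1)/|X|$ on all of $\cS$. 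Then I apply Lemma~\ref{polymatrix} to each polynomial $H(x) = F(x)g_{n,m}(x)/g_{n,m}(1)$ with $(m,n)\in\cS$: Lemma~\ref{coef}(iii) gives $h_{0,0} = f_{n,m} = F(1)/|X|$, so the identity collapses to $\sum_{(k,l)\neq(0,0)} h_{k,l}\|H_{k,l}^*\one\|^2 = 0$, and Lemma~\ref{coef}(ii) makes every $h_{k,l}$ positive on the support of the product. Hence $X$ is $(k,l)$-regular throughout that support, and as $(m,n)$ ranges over $\cS$ these supports cover $\cS*\cS$ (every $(p+s,q+r)$ with $(p,q),(r,s)\in\cS$ is realized by taking $(m,n) = (r,s)$ and $(k,l) = (p,q)$); Lemma~\ref{lem:designchar}(iii) then gives the $\cS*\cS$-design property.
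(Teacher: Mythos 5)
Your proof is correct, and while it shares the paper's overall architecture (the two implications out of (i) are identical, and the identity $\mathbb{H}D\mathbb{H}^*=F(1)I$ with the signature argument forcing $f_{k,l}>0$ is exactly the paper's opening move in (ii)), you diverge at three substantive points. First, your (iii)$\Rightarrow$(i) is more direct than the paper's: you get the annihilator $\sum_{(k,l)\in\cS}g_{k,l}$ straight from $\mathbb{H}\mathbb{H}^*=|X|I$ (squareness from tightness plus Lemma~\ref{lem:designchar}(ii)), whereas the paper routes through the product polynomial $\hat{F}=(\sum g_{k,l})(\sum g_{l,k})$ and the case of equality in Theorem~\ref{thm:relbound}; your version avoids the LP machinery entirely. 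Second, to show the $f_{k,l}$ are all equal you use the trace identity $\tr(\mathbb{H}^*\mathbb{H})=\tr(\mathbb{H}\mathbb{H}^*)$ plus Cauchy--Schwarz, where the paper invokes Lemma~\ref{lem:coefbound} (itself resting on Lemma~\ref{coef} and Theorem~\ref{thm:relbound}(ii)) to get $f_{k,l}\leq F(1)/|X|$ and then sums; your route is self-contained and arguably cleaner. Third, you extract the $\cS*\cS$-design property from the family of polynomials $F(x)g_{n,m}(x)/g_{n,m}(1)$ indexed by $(m,n)\in\cS$, while the paper uses the single polynomial $\hat{F}$; both hinge on the same fact about the Jacobi product expansion, namely that the leading coefficient of $g_{p,q}g_{s,r}$ (the one attached to $g_{p+s,q+r}$) is positive while all others are nonnegative. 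Two small imprecisions, neither fatal: your citation of Lemma~\ref{coef}(iii) should give $h_{0,0}=f_{m,n}$ (not $f_{n,m}$) for $H=Fg_{n,m}/g_{n,m}(1)$ --- harmless since $(m,n)\in\cS$ is exactly what guarantees the value $F(1)/|X|$, whereas $(n,m)$ need not lie in $\cS$; and Lemma~\ref{coef}(ii) does not literally apply since its hypothesis is $f_{k,l}>0$ for \emph{all} $(k,l)\in\nn^2$, while here $f_{k,l}=0$ off $\cS$ --- what you actually need is Lemma~\ref{coef}(i) for global nonnegativity together with the positivity of the leading product coefficient at the indices $(p+s,q+r)$, which is precisely the covering argument you spell out in your final parenthesis.
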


\begin{proof}
\begin{itemize}
\item[(i)] 
Recall that we use $m_{k,l}$ to denote $\dim(\Harm(k,l))$.

If $X$ is an $\cS$-code, then $|X| \leq \sum_{(k,l)\in\cS}m_{k,l}$. If $X$ is a $\cS*\cS$-design, then $|X| \geq \sum_{(k,l)\in\cS}m_{k,l}$. Combining the two, $|X| = \sum_{(k,l)\in\cS}m_{k,l}$. So $X$ is both a tight $\cS$-code and a tight design with respect to $\cS$.

\item[(ii)] Assume $X$ is a tight $\cS$-code, so that $|X|=\sum_{(k,l)\in\cS}m_{k,l}$. If $F(x)=\sum_{(k,l)\in\cS}f_{k,l}g_{k,l}(x)$ is the annihilator polynomial of $X$, then as in equation~\eqref{eqn:polymatrix},
$$\sum_{(k,l)\in\cS}f_{k,l}H_{k,l}H_{k,l}^*=F(1)I.$$
Define a matrix $H=[H_{k,l}]_{(k,l)\in\cS}$, which is a square matrix of size $|X|$. The left-hand side of the previous equation is $HBH^*$, where $B=\oplus_{(k,l)\in\cS}f_{k,l}I_{k,l}$ and $I_{k,l}$ is the identity matrix of size $m_{k,l}$. Hence $H$ is nonsingular, and comparing the signature implies that all $f_{k,l}$ are positive. 
Lemma~\ref{lem:coefbound} shows $f_{k,l}\leq F(1)/|X|$ for each $(k,l)\in\cS$.
Then the inequality
\begin{align*}
F(1) = \sum_{(k,l)\in\cS}f_{k,l}g_{k,l}(1) \leq \frac{F(1)}{|X|} \sum_{(k,l)\in\cS}g_{k,l}(1) = F(1)
\end{align*}
yields $f_{k,l}=F(1)/|X|$ for each $(k,l)\in\cS$. Normalizing so that $f_{k,l} = 1$, we have that $F(x) = \sum_{(k,l)\in\cS}g_{k,l}(x)$ is an annihilator for $X$. 

Consider instead $\hat{F}(x) =(\sum_{(k,l)\in \cS}g_{k,l}(x))(\sum_{(k,l)\in \cS}g_{l,k}(x))$, also an annihilator for $X$. Then $\hat{F}(1)=(\sum_{(k,l)\in \cS}m_{k,l})^2$ and $\hat{F}(\alpha) = 0$ for any $\alpha\in A(X)$. Using the Jacobi polynomial expansion $\hat{F}(x)=\sum_{k,l}\hat{f}_{k,l}g_{k,l}(x)$, we see that $\hat{f}_{k,l} > 0$ for all $(k,l) \neq (0,0)$ in $\cS*\cS$ and $\hat{f}_{0,0}=\sum_{(k,l)\in \cS}m_{k,l}$ by Lemma \ref{coef}. Therefore by Theorem~\ref{thm:relbound}(ii), $|X|\leq \hat{F}(1)/\hat{f}_{0,0}=\sum_{(k,l)\in \cS}m_{k,l}$.

Since equality holds, from the proof of Theorem~\ref{thm:relbound}(ii) it follows that $\hat{f}_{k,l}||H_{k,l}^*H_{0,0}|| = 0$ for all $(k,l) \neq (0,0)$ in $\cS*\cS$. But $\hat{f}_{k,l} > 0$, so $||H_{k,l}^*H_{0,0}|| = 0$ and $X$ is a (tight) $\cS*\cS$-design.

\item[(iii)] Assume $X$ is a tight design with respect to $\cS$, so  $\abs{X} = \sum_{(k,l)\in\cS}m_{k,l}$. Again consider $\hat{F}(x)=(\sum_{(k,l)\in \cS}g_{k,l}(x))(\sum_{(k,l)\in \cS}g_{l,k}(x))$. Then $\hat{F}(\alpha)\geq0$ for any $\alpha\in A(X)$, and the coefficients in the Jacobi polynomial expansion satisfy $\hat{f}_{k,l} = 0$ for $(k,l) \notin \cS*\cS$ and $\hat{f}_{0,0}=\sum_{(k,l)\in \cS}m_{k,l}$. Therefore by Theorem~\ref{thm:relbound}(ii), $|X|\geq \hat{F}(1)/\hat{f}_{0,0}=\sum_{(k,l)\in \cS}m_{k,l}$.

Since equality holds, from the proof of Theorem~\ref{thm:relbound} we see that $\hat{F}(\al) = 0$ for all $\al \in A(X)$. Therefore $\sum_{(k,l)\in \cS}g_{k,l}(\al) = 0$ and so $\sum_{(k,l)\in \cS}g_{k,l}$ is an annihilator for $X$. Thus $X$ is a (tight) $\cS$-code. 
\end{itemize}
\end{proof}

The following examples show that the relative bound is sometimes tight. 
\begin{example}{\bf(SIC-POVMs)} \label{ex:sicpovm}
Let $X$ be a finite subset in $\Omega(d)$ with the inner product set 
$A(X)=\{\pm\tfrac{1}{\sqrt{d+1}},\pm\tfrac{\cxi}{\sqrt{d+1}},\pm\cxi,-1\}$.
Taking 
\[
F(x)=\frac{d((d+1)x\bar{x}-1)(x^2+\bar{x}^2+2x+2\bar{x}+2)}{2},
\] 
we have $F(1)=4d^2$ and $F(\alpha)=0$ for all $\alpha\in A(X)$. Letting $F(x)=\sum_{k,l=0}^3f_{k,l}g_{k,l}(x)$, we find $f_{0,0} = 1$, and so by Theorem~\ref{thm:relbound}, $|X| \leq 4d^2$. Examples of such sets come from quantum information, where they are known as \defn{SIC-POVMs} \cite{rbsc}. Define the \defn{Pauli operators}
\[
P_x = \left(\begin{matrix} 0 & 1\\1 & 0 \end{matrix} \right), \quad P_z = \left(\begin{matrix} 1 & 0\\0 & -1 \end{matrix} \right).
\]
For $d = 2$, let $X$ be the orbit of $v = (1,\frac{-1-\sqrt{3}}{2}(1+\cxi))$ under the group generated by $\{P_x ,P_z, \cxi I\}$. For $d = 8$, let $X$ be the orbit of $v = (0,0,1+\cxi,1-\cxi,1+\cxi,-1-\cxi,0,2)/\sqrt{2}$ under the group generated by $\{P_x ,P_z, \cxi I\}^{\otimes 3}$ (this construction is due to Hoggar \cite{hoggar}). In both cases $|X| = 4d^2$. The corresponding points $P(X)$ in projective space also satisfy the projective Delsarte bound: $\abs{P(X)} = d^2$.
\end{example}

\begin{example}{\bf(MUBs and Kerdock codes)}
Let $X$ be a finite subset in $\Omega(d)$ with the inner product set 
$A(X)=\{\frac{\pm1\pm\cxi}{\sqrt{2d}},0,\pm\cxi,-1\}$. Taking 
\[
F(x)=d(x^4+\bar{x}^4)+2d(x^3+\bar{x}^3)+(d+1)(x^2+\bar{x}^2)+2(x\bar{x}+x+\bar{x}),
\] 
we find that $F(x)$ is an annihilator for $A(X)$, and so applying the relative bound,
\[
|X|\leq 4d(d+1).
\]
Similarly, if $A(X)=\{\frac{\pm1}{\sqrt{d}},\frac{\pm\cxi}{\sqrt{d}},0,\pm\cxi,-1\}$, then
\[
F(x)=d(d+1)(dx\bar{x}-1)(x+\bar{x})(x+\bar{x}+1)
\]
results in a bound of $|X| \leq 4d(d+1)$. 
Both of these bounds can be obtained using \defn{ $\zz_4$-Kerdock codes}~\cite{hkcss,hp}.

Let $C$ be a $\zz_4$-linear error-correcting code of length $d$ which contains the all-ones vector $\one$. For each $x = (x_1,\ldots,x_d) \in C$, define
\[
\hat{x} := \frac{1}{\sqrt{d}}(\cxi^{x_1},\ldots,\cxi^{x_d}) \in \Om(d),
\]
and consider $\hat{C} := \{\hat{x} : x\in C\}$. For fixed $x,y \in C$, let $n_j$ ($0 \leq j \leq 3$) denote the number of entries of $y-x \in C$ equal to $j$. Then 
\[
\ip{\hat{x}}{\hat{y}} = \frac{1}{d}[n_0 - n_2 + \cxi(n_1-n_3)].
\]
That is, the inner products in $\hat{C}$ depend only on the weights $(n_j)$ of the elements of $C$. 

Now let $C = \hat{K}(r+1)$, the $\zz_4$-linear Kerdock code of length $d = 2^r$, $r$ odd. Following \cite{hp}, there are codewords in $C$ with the following weights (plus the rotations of these weights obtained by adding $\one$):
\begin{align*}
(n_0,n_1,n_2,n_3) = & (2^r,0,0,0), \\ 
& (2^{r-1},0,2^{r-1},0), \\
& (2^{r-2} + \de_12^{\frac{r-3}{2}}, 2^{r-2} + \de_22^{\frac{r-3}{2}}, 2^{r-2} - \de_12^{\frac{r-3}{2}}, 2^{r-2} - \de_22^{\frac{r-3}{2}}),
\end{align*}
where $\de_i = \pm 1$. From these values of $(n_j)$ we get angles $1$, $0$, and $ \frac{1}{\sqrt{2d}}(\de_1 + \cxi \de_2)$ respectively, plus their rotations by $\cxi$. Finally, let $X := \hat{C} \cup \{\tfrac{\pm 1 \pm \cxi}{\sqrt{2}}e_j :  1\leq j \leq d\}$ be our spherical code in $\Om(d)$, where $e_j$ is a standard basis vector. Then $X$ has angle set $A(X)=\{\frac{\pm1\pm\cxi}{\sqrt{2d}},0,\pm\cxi,-1\}$ and has size $\abs{X} = 4d(d+1)$. The construction is similar for $r$ even.

\end{example}
For $d\leq 3$, tight even designs $S^{d-1}$ exist only if $t=2,4$ see \cite{bd1,bd2,bmv}.
\begin{example}{\bf(Tight even real designs)}
Let $d$ be a integer at least $2$ and $t$ be $2$ or $4$.
Consider $\cU= \{(k,l)\in\nn^2:k+l\leq t/2\}$. 
Let $X$ be a subset of $\Om(d)$ such that $\phi(X)$ is a tight $t$-design in $S^{2d-1}$ (where $\phi$ is the natural embedding of $\cx^d \rightarrow \re^{2d}$ given in equation~\eqref{eqn:phi}).
Here, if there exists a tight $4$-design in $S^{d-1}$, then $d$ must be $2$ or $(2m+1)^2-1$ for some integer $m$, in particular $d$ is an even integer, see \cite{bd1,bd2}.
Then $X$ is a tight design in $\Om(d)$ with respect to $\cU$. 
\end{example}

\subsection*{Bounds on $n$-antipodal codes}
We can show bounds on the size of $n$-antipodal codes in a similar way to Theorem~\ref{thm:absbound}.
Moreover, if equality holds then $n$-antipodal codes become complex spherical designs.

For a lower set $\cS$ and a positive integer $n\geq2$, 
define $\cS_n=\{(k,l)\in \cS: k\equiv l \mod n\}$. 
Then we let $\tilde{\cS}$ denote 
For a subset $\cU\sbs\nn^2$, 
a nonempty subset $X$ in $\Om(d)$ is said to be \defn{$\cU$-regular} 
if $X$ is $(k,l)$-regular for all $(k,l)\in\cU$.
Define $\mathcal{L}_n$ to be a set of indices $(j,0)$ and $(0,j)$ 
where $0\leq j\leq \lfloor n/2\rfloor$.
\begin{theorem}\label{thm:antipodal}
Let $\cS$ be a lower set and $n$ a positive integer at least $2$. 
Let $X$ be an $\cS$-code with an annihilator polynomial $F(x)$ in the span of $\{x^k\co{x}^l: (k,l) \in \cS\}$ satisfying the following condition:
\begin{align}\label{antipodalpoly}
F(w^j\al)=0 \text{ for any } \al \in A^*(X),0 \leq j \leq n-1. 
\end{align}
Then $|X|\leq n\min\{\sum_{(k,l)\in \cS_n}\dim(\Harm(k,l)),\sum_{(k,l)\in \cS\setminus\cS_n}\dim(\Harm(k,l))\}$. 
Moreover, if equality holds, then $X$ is $n$-antipodal and $\cT$-design where $\cT$ is the maximal lower set contained in $\mathcal{L}_n*(\widehat{\cS}*\widehat{\cS})$,  $\widehat{\cS}$ is defined as follows:
$\widehat{\cS}$ is $\cS_n$ if $|X|=n\sum_{(k,l)\in \cS_n}\dim(\Harm(k,l))$, $\cS\setminus \cS_n$ if $|X|=n\sum_{(k,l)\in \cS\setminus S_n}\dim(\Harm(k,l))$.
\end{theorem}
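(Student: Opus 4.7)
The strategy is to mimic the absolute bound of Theorem~\ref{thm:absbound}(ii), but applied to the orbit structure of $X$ under multiplication by $\om$. First I would decompose the annihilator as $F = \sum_{r=0}^{n-1} F^{(r)}$, where $F^{(r)}$ collects the monomials $c_{k,l}x^k\co{x}^l$ of $F$ with $k-l \equiv r \pmod{n}$, so that $F(\om^j\al) = \sum_{r} \om^{jr} F^{(r)}(\al)$. Viewing this as a Vandermonde system in the characters of $\zz/n\zz$, the hypothesis that $F(\om^j\al)=0$ for every $0 \le j \le n-1$ and every $\al \in A^*(X)$ inverts to give $F^{(r)}(\al) = 0$ for every $r$ and every $\al \in A^*(X)$.

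Next, let $\tilde X := \{\om^j a : 0 \le j \le n-1,\ a \in X\}$ be the closure of $X$ under $\langle\om\rangle$. By construction $\tilde X$ is $n$-antipodal, contains $X$, and $F$ annihilates every inner product occurring between distinct elements of $\tilde X$, so $\tilde X$ is an $\cS$-code. Letting $m$ denote the number of $\langle\om\rangle$-orbits in $\tilde X$, every orbit has size at most $n$, giving $|X| \le |\tilde X| \le nm$.

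To bound $m$, for each orbit $O$ I pick a representative $a \in X \cap O$ and set $f_O^{(r)}(z) := F^{(r)}(a^*z)$. Writing $\cS^{(r)} := \{(k,l)\in\cS : k-l \equiv r \pmod n\}$, which is closed under $(k,l)\mapsto(k-1,l-1)$, the harmonic decomposition of $\Hom(k,l)$ gives $\bigoplus_{(k,l)\in\cS^{(r)}}\Hom(k,l) = \bigoplus_{(k,l)\in\cS^{(r)}}\Harm(k,l)$, placing $f_O^{(r)}$ in the latter space. A direct computation yields $f_O^{(r)}(\om^i a) = \om^{ir} F^{(r)}(1)$ on $O$, and $f_O^{(r)}(b) = 0$ for $b$ in a different orbit (using $a^*b \in A^*(X)$). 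Since $\sum_r F^{(r)}(1) = F(1) > 0$, some residue $r$ has $F^{(r)}(1) \ne 0$; for such $r$ the family $\{f_O^{(r)}\}_O$ has pairwise disjoint nonzero supports on $\tilde X$, hence is linearly independent, yielding $m \le \sum_{(k,l) \in \cS^{(r)}} m_{k,l}$. Because $\cS^{(0)} = \cS_n$ and $\cS^{(r)} \subseteq \cS\setminus\cS_n$ for $r \ne 0$, choosing the appropriate residue produces the bound $|X| \le n\min\{\sum_{(k,l)\in\cS_n} m_{k,l},\ \sum_{(k,l)\in\cS\setminus\cS_n} m_{k,l}\}$.

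For the equality clause, I would trace where each inequality is tight: $|X|=|\tilde X|$ forces $X$ to equal its own $\langle\om\rangle$-closure, i.e.\ $X$ is $n$-antipodal; $|\tilde X|=nm$ forces every orbit to have size exactly $n$; and $m = \sum_{(k,l)\in\widehat{\cS}}m_{k,l}$ makes an orbit transversal $L$ (with $|L|=m$) a tight $\widehat{\cS}$-code with annihilator (a suitable scaling of) $F^{(r)}$ for the relevant $r$. Theorem~\ref{thm:equiv} then upgrades $L$ to a tight $\widehat{\cS}*\widehat{\cS}$-design, and Lemma~\ref{lem:avgantipodal} transports regularity from $L$ to its $n$-antipodal cover $X$: every $(k,l)$ with $k \not\equiv l \pmod n$ is $(k,l)$-regular automatically, while the remaining regular indices are exactly those obtained by convolving $\mathcal{L}_n$ into $\widehat{\cS}*\widehat{\cS}$. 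The hardest step here is this last transfer, where the degrees of regularity inherited by $L$ from tightness must be matched precisely against those picked up by the antipodal cover in order to identify the maximal lower set contained in $\mathcal{L}_n*(\widehat{\cS}*\widehat{\cS})$.
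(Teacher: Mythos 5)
Your route is essentially the paper's: pass to a transversal $L$ of the $\langle\om\rangle$-orbits (your $m$ is the paper's $|L|$), split the annihilator by the residue of $k-l$ modulo $n$, run the linear-independence argument of Theorem~\ref{thm:absbound}(ii) on $L$ with the resulting components, and multiply by $n$; the equality discussion via tightness of $L$, Theorem~\ref{thm:equiv}, and Lemma~\ref{lem:avgantipodal} is also exactly the paper's. Your only real refinement is the character inversion showing that \emph{every} component $F^{(r)}$ vanishes on $A^*(X)$, where the paper only extracts $F^{(0)}=F_n(x)=\tfrac1n\sum_jF(\om^jx)$ and lumps the rest into $F-F_n$.

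The gap is in the passage from ``some residue $r$ has $F^{(r)}(1)\neq0$'' to the stated minimum. Your argument gives $m\leq\sum_{(k,l)\in\cS^{(r)}}m_{k,l}$ only for those $r$ with $F^{(r)}(1)\neq0$, and $F(1)>0$ guarantees just one such $r$; you cannot ``choose the appropriate residue.'' The $\cS_n$ half of the minimum needs $F^{(0)}(1)\neq0$ and the $\cS\setminus\cS_n$ half needs $F^{(r)}(1)\neq0$ for some $r\neq0$, and neither follows from the hypotheses. Nor is the step repairable without an extra assumption: take $n=2$, $F(x)=x+\co{x}$, $\cS=\cl\{(1,0)\}$, and $X=\{e_1,e_2,\cxi u,\cxi u'\}\sbs\Om(2)$ with $u,u'$ real orthonormal and not standard basis vectors; then every inner product is purely imaginary of modulus less than $1$, all hypotheses hold, $F^{(0)}\equiv0$ so your family $\{f_O^{(0)}\}$ is identically zero, and $|X|=4$ exceeds $n\sum_{(k,l)\in\cS_2}m_{k,l}=2$. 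To be fair, you have reproduced the paper's own lacuna: its proof declares both $F_n$ and $F-F_n$ to be annihilator polynomials of $L$ without checking that either is nonzero at $1$. But as written your concluding sentence of the bound paragraph is a non sequitur, and the honest output of your (and the paper's) argument is $|X|\leq n\sum_{(k,l)\in\cS^{(r)}}m_{k,l}$ for each residue $r$ with $F^{(r)}(1)\neq0$.
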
    
\begin{proof}
For $x\in \Om(d)$, set $S_x=\{w^jx:0\leq j\leq n-1\}$.
Define $L$ to be a set of representatives of $S_x$ for every $x\in X$. 
Then $|X|\leq n|L|$, and equality holds if and only if $X$ is $n$-antipodal. 
Set $F_n(x)=\tfrac{1}{n}\sum_{j=0}^{n-1}F(w^jx)$.
Since $A(L)$ is contained in $A^*(X)$, $F_n(x)$ is an annihilator polynomial of $L$. 
Since $\sum_{j=0}^{n-1}g_{k,l}(w^jx)=0$ provided that $k\not\equiv l\mod n$, $F_n(x)$ is in the span of $\{x^k\co{x}^l: (k,l) \in \cS_n\}$.
While, $F(x)-F_n(x)$ is an annihilator polynomial in the span of $\{x^k\co{x}^l: (k,l) \in \cS\setminus \cS_n\}$ 
Then the same way in Theorem~\ref{thm:absbound}(ii) shows 
$$|L|\leq \min\{\sum_{(k,l)\in \cS_n}\dim(\Harm(k,l)),\sum_{(k,l)\in \cS\setminus\cS_n}\dim(\Harm(k,l))\}.$$
Thus we have the desired bound on the size of $X$. 

Assume $X$ attains the bound above, namely $X$ is an $n$-antipodal and  $|L|=\sum_{(k,l)\in\widehat{\cS}}m_{k,l}$. 
As is the same way in Theorem~\ref{thm:equiv}, 
we have that $\widehat{F}(x) = \sum_{(k,l)\in\widehat{\cS}}g_{k,l}(x)$ is an annihilator for $L$, 
and $L$ is $\widehat{\cS}*\widehat{\cS}$-regular.
Since $X$ is the $n$-antipodal cover of $L$, thus $X$ is a $\cT$-design for the desired $\cT$.
\end{proof}
\begin{remark}
If $\cS_n$ lies in $\{(k,k)\in\nn^2:0\leq k\}$, then the bound given in Theorem~\ref{thm:antipodal} coincides with the bound as the projective code.
\end{remark}

For $3\leq d$, tight odd designs $S^{d-1}$ exist only if $t=1,3,5,7,11$ see \cite{bd1,bd2,bmv}.
Tight $1$-designs are $\{x,-x \}$ for any $x\in S^{d-1}$, and 
tight $3$-designs are cross polytopes in $S^{d-1}$, namely $\{\pm f_1,\ldots,\pm f_d\}$ for any orthonormal basis $\{f_1,\ldots,f_d\}$ in $\re^d$. 
The existence of tight $5$-designs in $S^{d-1}$ is equivalent to that of tight $4$-designs in $S^{d-2}$,
so tight $5$-designs exists in $S^{d-1}$ for some odd integer $d$.
Tight $7$-design in even dimension exists for $d=8$, that is the $E_8$ root system, 
and the only tight $11$-design is the minimum vectors of the Leech lattice in $\re^{24}$.
\begin{example}{\bf(Tight odd real designs)}
Let $d$ be a integer at least $2$ and $t$ be $1,3,7$ or $11$.
Let $X$ be a subset of $\Om(d)$ such that $\phi(X)$ is a tight $t$-design in $S^{2d-1}$ (where $\phi$ is the natural embedding of $\cx^d \rightarrow \re^{2d}$ given in equation~\eqref{eqn:phi}).
Then $X$ attains the bound in Theorem~\ref{thm:antipodal} for $n=2$. 
\end{example}

\section{Association schemes}\label{sec:schemes}

In this section we consider complex spherical designs whose inner product relations carry the structure of a nonsymmetric association scheme.
In contrast with real spherical designs or projective designs, not every tight complex spherical code (or tight complex spherical design) gives rise to a scheme. Nevertheless, we do get a scheme when the strength of the design is high compared to its degree.

Let $X$ be a nonempty finite set and let $R_i$ be a nonempty binary relation on $X$ for $0\leq i\leq s$. The \defn{adjacency matrix} $A_i$ of relation $R_i$ is defined to be the $(0,1)$-matrix whose rows and columns are indexed by $X$ such that $(A_i)_{xy}=1$ if $(x,y)\in R_i$ and $(A_i)_{xy}=0$ otherwise. 
A pair $(X,\{R_i\}_{i=0}^s)$ is a \defn{commutative association scheme}, or simply a \defn{scheme} if the following five conditions hold:

\begin{enumerate}[(i)]
\item $A_0$ is the identity matrix.
\item $\sum_{i=0}^sA_i=J$, where $J$ is the all-one matrix.
\item $A_i^T=A_{i'}$ for some $i' \in\{0,1,\ldots,s\}$.
\item $A_iA_j=\sum_{k=0}^sp_{i,j}^kA_k$ for $i,j\in\{0,1,\ldots,s\}$.
\item $A_iA_j=A_jA_i$ for any $i,j$.
\end{enumerate}
(See \cite{BI}, \cite{bcn} for background.) For simplicity, we also refer to the set $\{A_0,\ldots,A_s\}$ as an association scheme. The scheme is said to be \defn{symmetric} if $i'=i$ for all $1\leq i\leq s$, otherwise it is said to be \defn{nonsymmetric}. The algebra $\mathcal{A}$ generated by all adjacency matrices $A_0,A_1,\ldots,A_s$ over $\cx$ is called the \defn{adjacency algebra} or \defn{Bose-Mesner algebra}. If $\mathcal{A}$ is the space spanned by $(0,1)$-matrices $A_0,A_1,\ldots,A_s$ satisfying (i) to (iii), then $\cA$ is the adjacency algebra of an association scheme if and only if 
$\cA$ is commutative and closed under ordinary multiplication.


Since the adjacency algebra is semisimple and commutative, 
there exists a unique set of primitive idempotents of the adjacency algebra, which is denoted by $\{E_0,E_1,\ldots,E_s\}$. 
Since $\{E_0^T,E_1^T,\ldots,E_s^T\}$ forms also the set of primitive idempotents, 
we define $\widehat{i}$ by the index such that $E_{\widehat{i}}=E_i^T$ for $0\leq i\leq s$.
The adjacency algebra is closed under the entrywise product $\circ$, so 
we can define structure constants, the \defn{Krein parameters} $q_{i,j}^k$, for $E_0,E_1,\ldots,E_s$ under entrywise product:
$$E_i\circ E_j=\frac{1}{\abs{X}}\sum_{k=0}^sq_{i,j}^kE_k.$$
Both sets of matrices $\{A_0,A_1,\ldots,A_s\}$ and $\{E_0,E_1,\ldots,E_s\}$ are bases for the adjacency algebra. Therefore there exist change of basis matrices $P$ and $Q$ defined as follows;
\[
A_i=\sum_{j=0}^sP_{ji}E_j,\quad E_j=\frac{1}{\abs{X}}\sum_{i=0}^sQ_{ij}A_i.
\]
We call $P$ and $Q$ the \defn{eigenmatrix} and \defn{second eigenmatrix} of the scheme respectively. 
For each $0\leq i \leq s$, $k_i:=P_{i0}$ and $m_i:=Q_{i0}$ are called the $i$-th valency and multiplicity.

From now on, consider a finite set $X$ in $\Om(d)$ with an inner product set $A(X)=\{\alpha_1,\ldots,\alpha_s\}$, and set $\alpha_0=1$.
For $0\leq i\leq s$, define the relation $R_i$ as the set of pairs $(x,y)$ such that $x^*y=\alpha_i$, and $A_i$ coincides with the adjacency matrix of $R_i$. Then $\{A_0,A_1,\ldots,A_s\}$ clearly satisfy the above conditions from (i) to (iii).
Define the intersection numbers for $x,y\in X$, $1\leq i,j\leq s$ as 
$$p_{i,j}(x,y):=|\{z\in X: x^*z=\alpha_i,z^*y=\alpha_j\}|.$$
For $0 \leq i \leq s$, we let $\tilde{i}$ denote the index such that $\alpha_{\tilde{i}} = \overline{\alpha_i}$.
If the intersection numbers $p_{i,j}(x,y)$ depend only on $i,j$ and $x^*y$ (not on the particular choice of $x$ and $y$), and $p_{i,j}(x,y)=p_{j,i}(x,y)$ holds for all $i,j$, 
then the set $X$ carries an association scheme. 

From each $(k,l)\in\nn^2$ and characteristic matrix $H_{k,l}$, 
define a matrix $F_{k,l}=\frac{1}{|X|}H_{k,l}H_{k,l}^*$.
It follows from the Addition theorem (Theorem~\ref{thm:addition}) that $F_{k,l}=\frac{1}{|X|}\sum_{i=0}^sg_{k,l}(\alpha_i)A_i$, 
which means that each $F_{k,l}$ is in the vector space $\mathcal{A}$.
When $X$ is a design, $F_{k,l}$ often appears as a primitive idempotent in the scheme.
The following theorem is a complex analogue of Theorem~7.4 in \cite{dgs}.

\begin{theorem}\label{thm:S1}
Let $X$ be a $\cU*\cU$-design with degree $s$. Then:
\begin{enumerate}[(i)]
\item $|\mathcal{U}|\leq s+1$.
\item If $|\mathcal{U}| \geq s$, then $X$ carries an association scheme.
\item If $|\mathcal{U}|=s+1$, then $X$ is a tight design with respect to $\cU$.
\end{enumerate} 
\end{theorem}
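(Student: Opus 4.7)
The plan is to work with the matrices $F_{k,l}:=\tfrac{1}{|X|}H_{k,l}H_{k,l}^*$ for $(k,l)\in\cU$. By Koornwinder's addition theorem, $F_{k,l}=\tfrac{1}{|X|}\sum_{i=0}^s g_{k,l}(\al_i)A_i$ lies in the Bose--Mesner algebra $\cA=\spn\{A_0,\ldots,A_s\}$, which has dimension exactly $s+1$ since the $A_i$ are $(0,1)$-matrices partitioning $J$ and hence linearly independent. For $(k,l),(k',l')\in\cU$ the index $(k+l',l+k')$ lies in $\cU*\cU$, so Lemma~\ref{lem:designchar}(ii) gives $H_{k,l}^*H_{k',l'}=|X|\de_{k,k'}\de_{l,l'}I$. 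Substituting this into the product $F_{k,l}F_{k',l'}$ collapses it to $\de_{(k,l),(k',l')}F_{k,l}$, so $\{F_{k,l}:(k,l)\in\cU\}$ is a family of pairwise orthogonal idempotents in $\cA$. Each is nonzero because $\tr(F_{k,l})=\tfrac{1}{|X|}\tr(H_{k,l}^*H_{k,l})=m_{k,l}>0$, so they are linearly independent, yielding $|\cU|\leq\dim\cA=s+1$; this is (i).

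For (ii), assume $|\cU|\in\{s,s+1\}$. If $|\cU|=s$, I augment the collection with $F_\infty:=I-\sum_{(k,l)\in\cU}F_{k,l}$; a short computation from the orthogonality relations above shows $F_\infty$ is idempotent and orthogonal to every $F_{k,l}$, and $F_\infty\neq0$ since otherwise $s$ linearly independent matrices would span the $(s+1)$-dimensional algebra $\cA$. Either way $\cA$ acquires a basis of $s+1$ pairwise orthogonal idempotents, and any algebra admitting such a basis is automatically commutative and closed under multiplication. Combined with the already-noted identities $A_0=I$, $\sum_iA_i=J$, and $A_i^T=A_{\tilde i}$ (from $\overline{\al_i}\in A(X)\cup\{1\}$), this certifies $(X,\{A_i\}_{i=0}^s)$ as a commutative association scheme.

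For (iii), if $|\cU|=s+1$ the $F_{k,l}$ themselves form a basis of $\cA$; writing $I=\sum c_{k,l}F_{k,l}$ and multiplying by $F_{k',l'}$ forces each $c_{k',l'}=1$, so $I=\sum_{(k,l)\in\cU}F_{k,l}$. Taking traces gives
\[
|X|=\tr(I)=\sum_{(k,l)\in\cU}\tr(F_{k,l})=\sum_{(k,l)\in\cU}m_{k,l},
\]
which matches the absolute lower bound of Theorem~\ref{thm:absbound}(i) for $\cU*\cU$-designs, so $X$ is tight with respect to $\cU$. The subtle point driving all three parts is that the hypothesis of being a $\cU*\cU$-design (rather than just a $\cU$-design) is exactly what guarantees the orthogonality $H_{k,l}^*H_{k',l'}=|X|\de I$ across \emph{all} pairs in $\cU\times\cU$, converting the $F_{k,l}$ from mere elements of $\cA$ into orthogonal idempotents and letting the dimension count do the rest of the work.
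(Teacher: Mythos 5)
Your proof is correct in substance and follows the paper's argument essentially step for step: the same idempotents $F_{k,l}=\tfrac{1}{|X|}H_{k,l}H_{k,l}^*$ and dimension count in $\cA=\spn\{A_0,\ldots,A_s\}$ for (i), the same augmentation by $I-\sum_{(k,l)\in\cU}F_{k,l}$ when $|\cU|=s$ for (ii), and the same trace/multiplicity computation for (iii). The one point where you go beyond the paper is your justification that $F_\infty:=I-\sum_{(k,l)\in\cU}F_{k,l}$ is nonzero, and that justification is a non sequitur: $F_\infty=0$ would only mean that $I$ lies in $\spn\{F_{k,l}:(k,l)\in\cU\}$, not that those $s$ matrices span the $(s+1)$-dimensional space $\cA$; indeed, since each $F_{k,l}$ is an orthogonal projection of rank $m_{k,l}$, $F_\infty=0$ occurs precisely when $|X|=\sum_{(k,l)\in\cU}m_{k,l}$, which is not obviously excluded when $|\cU|=s$. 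The paper itself simply asserts $|\cE|=s+1$ at this point without argument, so you share its gap rather than introduce a new one, but the reason you offer should be repaired or dropped.
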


\begin{proof}
\begin{itemize}
\item[(i)]The vector space $\mathcal{A} := \spn\{A_0,\ldots,A_d\}$ has dimension $s+1$.
Since $X$ is a $\cU*\cU$-design, 
the set $\{F_{k,l}: (k,l)\in\mathcal{U}\}$ is linearly independent in $\mathcal{A}$ by Lemma~\ref{lem:designchar}. 
Therefore $|\mathcal{U}|\leq s+1$. 
\item[(ii)]
Set $\cE=\{F_{k,l}:(k,l) \in \cU\}$ if $|\cU| = s+1$ and $\cE = \{F_{k,l}:(k,l) \in \cU\} \cup \{ I-\sum_{(k,l)\in\mathcal{U}}F_{k,l}\}$ if $|\cU| = s$.
Since $|\cE| = s+1$, $\cE$ forms a basis for $\mathcal{A}$ consisting of mutually orthogonal idempotents. Therefore $\mathcal{A}$ is commutative and closed under ordinary multiplication, so it is the adjacency algebra of an association scheme.
\item[(iii)]
For $(k,l)\in \mathcal{U}$, the multiplicity of $F_{k,l}$ in the association scheme is $m_{k,l}$. When $|\cU|=s+1$, it follows that $|X|=\sum_{(k,l)\in\mathcal{U}}m_{k,l}$, attaining the bound in Theorem~\ref{thm:absbound} (i).
Hence $X$ is a tight design with respect to $\cU$.
\end{itemize}
\end{proof}
When the assumption of Theorem~\ref{thm:S1}(ii) holds, the set $\cE$ is precisely the set of primitive idempotents of the association scheme.
Moreover, every idempotent of $\cE \backslash \{I-\sum_{(k,l)\in\mathcal{U}}F_{k,l}\}$ has the form $g_{k,l}(F_{1,0})$ or $g_{k,l}(F_{0,1})$, where $F_{1,0}$ is the primitive idempotent which is a multiple of the Gram matrix of $X$.

In case of $|\cU|=s+1$, the second eigenmatrix is given by 
\begin{equation}\label{eqn:secondeigenmatrix}
Q=(g_{k,l}(\alpha_i))_{\substack{1\leq i\leq s\\ (k,l)\in\mathcal{U}}}.
\end{equation}
The following theorem constrains the inner product set $A(X)$ in this case. 

\begin{corollary}\label{S12}
Let $X$ be a $\cU*\cU$-design with degree $s$ such that $|\cU|=s+1$.
Then each element $\alpha_i \in A(X)$ is a root of $\sum_{(k,l)\in\mathcal{U}}g_{k,l}(x)$. 
\end{corollary}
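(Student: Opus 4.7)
The plan is to exploit the fact, established in Theorem~\ref{thm:S1}(ii) and the remark immediately following it, that when $|\cU| = s+1$ the family $\cE = \{F_{k,l} : (k,l) \in \cU\}$ is \emph{exactly} the full set of primitive idempotents of the association scheme on $X$. Since these idempotents are mutually orthogonal and sum to $I$ (the primitive idempotents of any adjacency algebra resolve the identity), I get the matrix identity
\[
\sum_{(k,l) \in \cU} F_{k,l} \;=\; I \;=\; A_0.
\]

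Next I substitute the explicit expression $F_{k,l} = \frac{1}{|X|}\sum_{i=0}^s g_{k,l}(\alpha_i) A_i$, which follows from Koornwinder's addition theorem (Theorem~\ref{thm:addition}) as noted in the paragraph preceding Theorem~\ref{thm:S1}. This rewrites the left-hand side as
\[
\frac{1}{|X|} \sum_{i=0}^s \left(\sum_{(k,l)\in\cU} g_{k,l}(\alpha_i)\right) A_i.
\]
Since $\{A_0,A_1,\ldots,A_s\}$ is a linearly independent set in the adjacency algebra, I may equate coefficients of each $A_i$ on both sides of the identity $\sum_{(k,l)\in\cU} F_{k,l} = A_0$. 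Comparing the $A_0$-coefficient yields $\sum_{(k,l)\in\cU} g_{k,l}(1) = |X|$ (consistent with the tightness $|X| = \sum_{(k,l)\in\cU} m_{k,l}$ from Theorem~\ref{thm:S1}(iii), since $g_{k,l}(1) = m_{k,l}$). Comparing the $A_i$-coefficient for each $1 \leq i \leq s$ gives
\[
\sum_{(k,l)\in\cU} g_{k,l}(\alpha_i) \;=\; 0,
\]
which is precisely the statement that each $\alpha_i \in A(X)$ is a root of $\sum_{(k,l)\in\cU} g_{k,l}(x)$.

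There is no real obstacle here: the only nontrivial ingredient is the identification of $\cE$ as the full set of primitive idempotents (the case $|\cU| = s+1$ of Theorem~\ref{thm:S1}(ii), where no complementary idempotent $I - \sum F_{k,l}$ is needed), after which the result is a direct coefficient comparison. As a sanity check, this matches the description of the second eigenmatrix in \eqref{eqn:secondeigenmatrix}: the row sums of $Q$ over the columns indexed by $\cU$ must vanish on every nontrivial relation, which is exactly the assertion of the corollary.
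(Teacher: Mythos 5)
Your proposal is correct and follows essentially the same route as the paper: both rest on the identity $\sum_{(k,l)\in\cU} H_{k,l}H_{k,l}^* = |X|I$ (equivalently $\sum_{(k,l)\in\cU} F_{k,l} = I$, valid because $\cE$ is the full set of primitive idempotents when $|\cU|=s+1$) combined with the addition theorem. The paper simply compares the $(x,y)$-entry for a pair with $x^*y=\alpha_i$ rather than equating coefficients of $A_i$ in the adjacency algebra, which is the same computation.
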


\begin{proof}
For distinct $x,y$ such that $x^*y=\alpha_i$, comparing the $(x,y)$ entry of both sides of the equation $\sum_{(k,l)\in\mathcal{U}}H_{k,l}H_{k,l}^*=|X|I$, 
we have $\sum_{(k,l)\in\mathcal{U}}g_{k,l}(\alpha_i)=0$. 
\end{proof}

Note, however, that not every tight $\cU$-code has degree $s = \abs{\cU}-1$, as the following example indicates.

\begin{example}
Consider $\cU= \{(0,0),(1,0),(0,1)\}$. Let $X$ be a subset of $\Om(d)$ such that $\phi(X)$ is a regular simplex in $\re^{2d}$ (where $\phi$ is the natural embedding of $\cx^d \rightarrow \re^{2d}$ given in equation~\eqref{eqn:phi}). Then $|X| = 2d+1$ and for all $x \neq y$ in $X$,
\[
\Re(x^*y) = \phi(x)^T\phi(y) = -1/2d.
\]
Thus $\sum_{(k,l) \in \cU}g_{k,l}(x) = d(x + \bar{x}) + 1$ is an annihilator for $X$, and $X$ is a tight $\cU$-code. However, the angle set $A(X)$ may be large, as it is only $\Re(x^*y)$ that is constrained, not $x^*y$ itself. In general, $X$ has degree larger than $\abs{\cU} - 1 = 2$.
Indeed, if degree is $2$, then $X$ carries a nonsymmetric scheme with class $2$.
Then a necessary condition of existence for such schemes is that $d$ is congruent to $3$ modulo $4$.
\end{example}

\section{Association schemes related to real and projective designs}\label{sec:connections}

Lemmas~\ref{lem:projectivedesign} and \ref{lem:realdesign} showed that complex spherical designs can sometimes be constructed from projective or real spherical designs and vice versa. The following theorems show that the corresponding association schemes are also related.

A scheme $(X,\{\tilde{R}_i\})$ is a \defn{fusion} of scheme $(X,\{R_i\})$ if each $\tilde{R}_i$ is a union of $R_i$'s.

\begin{theorem}\label{thm:complexreal}
Let $t$ be a positive even integer and 
let $X$ be a tight design with respect to $\mathcal{U}=\{(k,l)\in\nn^2:  k+l\leq t\}$ and $|\cU|\geq s$.
Then:
\begin{enumerate}[(i)]
\item $\phi(X)$ is a tight $t$-design in $S^{2d-1}$. 
\item The scheme $(\phi(X),\{R_\alpha: \alpha\in A(\phi(X))\}$ is a fusion scheme of the scheme $(X,\{R_\alpha: \alpha \in A(X)\})$.
\end{enumerate}
\end{theorem}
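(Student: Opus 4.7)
The plan is to handle (i) and (ii) in sequence. Both parts rest on the identity $\phi(x)^T\phi(y) = \Re(x^*y)$ noted just before Lemma~\ref{lem:realdesign}.

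For (i), I first show $\phi(X)$ is a real spherical $t$-design. Since $(0,0)\in\cU$, we have $\cU\subseteq \cU*\cU$, and because the target set contains every lower set inside it, $X$ is in particular a $\{(k,l):k+l\leq t\}$-design. Lemma~\ref{lem:realdesign} then gives that $\phi(X)$ is a real spherical $t$-design in $S^{2d-1}$. To verify tightness, I use the decomposition of real spherical harmonics on $S^{2d-1}$ under the natural $U(d)$-action, namely
\[
\dim H_n^{\re}(S^{2d-1}) = \sum_{k+l = n} m_{k,l},
\]
which expresses a real degree-$n$ harmonic as a sum of bi-homogeneous complex harmonics of bidegree $(k,l)$ with $k+l=n$. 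This allows me to rewrite
\[
|X|=\sum_{(k,l)\in\cU}m_{k,l} = \sum_{n}\dim H_n^{\re}(S^{2d-1}),
\]
identifying $|X|$ as the dimension of a space of polynomials on $S^{2d-1}$, which is exactly the real tight-design bound from \eqref{eqn:realbound}.

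For (ii), the identity $\phi(x)^T\phi(y) = \Re(x^*y)$ shows that $A(\phi(X)) = \{\Re(\alpha) : \alpha \in A(X)\}$ and that each real relation on $\phi(X)$ decomposes as
\[
R_\beta^{\phi(X)} = \bigsqcup_{\alpha \in A(X),\, \Re(\alpha) = \beta} R_\alpha^{X}.
\]
Equivalently, the adjacency matrix $B_\beta$ of $R_\beta^{\phi(X)}$ equals $\sum_{\Re(\alpha)=\beta} A_\alpha$. By Theorem~\ref{thm:S1}(ii), the matrices $\{A_\alpha : \alpha \in A(X)\}$ already form the adjacency basis of an association scheme on $X$, and by part (i) combined with the Delsarte--Goethals--Seidel tight-design theorem for real spherical designs, $\phi(X)$ itself carries a symmetric association scheme with adjacency basis $\{B_\beta\}$. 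Because conjugation $\alpha\mapsto\overline{\alpha}$ preserves the real part, each fibre of $\Re$ on $A(X)$ is closed under the index-involution $i\mapsto\widehat{i}$ of the complex scheme, so each $B_\beta$ is automatically symmetric and $(0,1)$-valued. The identity $B_\beta = \sum_{\Re(\alpha)=\beta} A_\alpha$ therefore exhibits the real Bose--Mesner algebra as a subalgebra (under both ordinary and Schur product) of the complex one, realising the real scheme as a fusion of the complex scheme.

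The main obstacle is the dimension identity used in part (i): once this is in hand, the tightness drops out and part (ii) is essentially a bookkeeping exercise in which the two schemes exist independently (by Theorem~\ref{thm:S1}(ii) and by DGS applied to the tight real design from (i)) and the fusion is forced by the obvious set-theoretic decomposition of the relations.
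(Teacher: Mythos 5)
Your part (i) follows the paper's own route: the design property comes from Lemma~\ref{lem:realdesign}, and tightness from comparing $|X|=\sum_{(k,l)\in\cU}m_{k,l}$ with the bound \eqref{eqn:realbound}. The branching identity you single out as the main obstacle, $\dim H^{\re}_n(S^{2d-1})=\sum_{k+l=n}m_{k,l}$, is exactly what the paper uses silently when it equates $\sum_{(k,l)\in\cU}m_{k,l}$ with a sum of two binomial coefficients; note it also drops out of the paper's own equation \eqref{eqn:poly} evaluated at $x=1$, since $Q_{2d,n}(1)=\sum_{k+l=n}g_{k,l}(1)=\sum_{k+l=n}m_{k,l}$, so no new representation theory is needed. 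One concrete caution: you never specify the range of $n$ in $\sum_n \dim H^{\re}_n$. With $\cU=\{(k,l):k+l\leq t\}$ as printed in the statement, one gets $\sum_{(k,l)\in\cU}m_{k,l}=\binom{2d+t-1}{t}+\binom{2d+t-2}{t-1}$, which is the bound \eqref{eqn:realbound} for a \emph{$2t$-design}, not a $t$-design; your chain of equalities closes only if $n$ runs up to $t/2$, i.e., if $\cU=\{(k,l):k+l\leq t/2\}$. That is in fact the intended hypothesis --- the paper's proof computes $\sum_{(k,l)\in\cU}m_{k,l}=\binom{2d+t/2-1}{t/2}+\binom{2d+t/2-2}{t/2-1}$, and the earlier example on tight even real designs uses $k+l\leq t/2$ --- so the printed statement carries a typo which your write-up inherits by leaving the range implicit rather than resolving it. Make the range explicit and flag the correction to $\cU$.

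For part (ii) you take a genuinely different route from the paper, and it is a valid one. The paper works entirely inside the complex scheme: it partitions the adjacency matrices by $\Re(\alpha)$ and the idempotents $E_{k,l}$ by $n=k+l$, checks the Bannai--Muzychuk row-sum condition via \eqref{eqn:poly} (each block row sum in $Q$ is $Q_{2d,n}(\alpha')$, constant on each fibre of $\Re$), and thereby \emph{constructs} the fusion scheme together with its second eigenmatrix $(Q_{2d,n}(\alpha'))$, identifying it with the scheme on $\phi(X)$; tightness enters to force $A(\phi(X))$ to be the full root set of $Q_{2d,t/2}$, so the two partitions have matching sizes $t/2+1$. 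You instead produce the two schemes independently --- the complex one from Theorem~\ref{thm:S1}(ii) using $|\cU|\geq s$, the real one from part (i) together with the Delsarte--Goethals--Seidel theorem that tight real designs carry schemes --- and then observe that $R^{\phi(X)}_{\beta}=\bigcup_{\Re(\alpha)=\beta}R^{X}_{\alpha}$ is, under the paper's definition of fusion, already the whole conclusion. That logic is sound (DGS applies because tightness puts $A(\phi(X))$ inside the roots of $Q_{2d,t/2}$, so the degree $s'$ of $\phi(X)$ satisfies $t\geq 2s'\geq 2s'-2$), and it is shorter; what it buys less of is self-containedness and information, since the Bannai--Muzychuk computation yields the fusion's second eigenmatrix explicitly and needs no external scheme-existence theorem. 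Your remarks on fibres of $\Re$ being closed under $i\mapsto\widehat{i}$ and on Bose--Mesner subalgebra closure are harmless but unnecessary: symmetry of each $B_\beta$ is immediate from $\Re(x^*y)=\Re(y^*x)$, and the paper's definition of fusion asks only that both schemes exist and that each fused relation be a union of the finer ones.
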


\begin{proof}
\begin{enumerate}[(i)]
\item Lemma~\ref{lem:realdesign} implies that $\phi(X)$ is a $t$-design. Since
\[
|\phi(X)|=|X|=\sum_{(k,l)\in \mathcal{U}}m_{d,k,l}=\binom{2d+t/2-1}{t/2}+\binom{2d+t/2-2}{t/2-1},
\]
the lower bound on the size of a $t$-design in equation~\eqref{eqn:realbound} implies that $\phi(X)$ is tight.

\item Since $\phi(X)$ is a tight $t$-design, the set $A(\phi(X))=\{\text{Re}(\alpha): \alpha \in A(X))\}$ coincides with the entire set of roots of Gegenbauer polynomial $Q_{2d,t/2}(x)$.
In particular the cardinality of $A(\phi(X))$ is $t/2$.
Here, we consider partitions of adjacency matrices and primitive idempotents as follows:
\begin{align*}
\{A_\alpha: \alpha\in A(X)\}&=\bigcup_{\alpha'\in A(\phi(X))}\{A_\alpha: \text{Re}(\alpha)=\text{Re}(\alpha')\},\\
\{E_{k,l}: (k,l)\in \mathcal{U}\}&=\bigcup_{0\leq n\leq t}\{E_{k,l}: k+l=n\}.
\end{align*}
Consider a block in the second eigenmatrix $Q$ with rows indexed by $\alpha'\in A(\phi(X))$ and columns indexed by $n \leq t$. By equation~\eqref{eqn:poly}, the row sum of that block is $ Q_{2d,n}(\alpha')$ . Then the  Bannai-Muzychuk criterion \cite{bannai,muz} shows that these partitions give a fusion scheme, 
and its second eigenmatrix is 
\[
(Q_{2d,n}(\alpha'))_{\substack{\alpha'\in A(\phi(X))\\ 0\leq n\leq t/2}}.
\]
Hence this fusion scheme coincides with the scheme obtained from the tight spherical $t$-design $\phi(X)$.
\end{enumerate}
\end{proof}

A scheme $(\tilde{X},\{\tilde{R}_i\})$ is a \defn{quotient} of scheme $(X,\{R_i\})$ if some union of $R_i's$ is an equivalence relation on $X$, with equivalence classes $\tilde{X}$, and $\{\tilde{R}_i\}$ is the set of relations induced from $\{R_i\}$ by that equivalence relation (see \cite[Section 2.4]{bcn}).

\begin{theorem}\label{thm:complexprojective}
Let $X$ be an $n$-antipodal cover of $L$ of degree $s$ such that for every $\al \in \{1\} \cup A(L)\setminus\{0\}$, there are exactly $n$ elements of  $\{1\} \cup A(X)\setminus\{0\}$ with absolute value $\al$. Suppose there exists a lower set $\mathcal{U}$ such that $\cU*\cU\sbs\cT$ and $s\leq |\mathcal{U}|$, and let $t$ be the largest integer with $(t,t) \in \cT$, with $2 \leq t \leq n$.
Then:
\begin{enumerate}[(i)]
\item $P(L)$ is a $t$-design in $\mathbb{C}\mathbb{P}^{d-1}$ with degree at most $t/2+1$.
\item The scheme $(P(L),\{R_\alpha: \alpha\in A(P(L))\}$ is a quotient scheme of the scheme $(X,\{R_\alpha: \alpha \in A(X)\})$.
\end{enumerate}
\end{theorem}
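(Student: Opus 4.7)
The plan is to construct a quotient association scheme on $L$ (equivalently on $P(L)$) from the scheme on $X$ using the free $\zz/n\zz$-action $x\mapsto\omega x$ provided by $n$-antipodality, and then to identify this quotient with the projective scheme on $P(L)$ whose relations are the $\abs{x^*y}^2$-classes.

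Since $\cU*\cU\sbs\cT$ and $X$ is a $\cT$-design with $s\leq |\cU|$, Theorem~\ref{thm:S1}(ii) endows $X$ with an association scheme whose primitive idempotents are $\{F_{k,l}:(k,l)\in\cU\}$, augmented by the defect $E_0 := I-\sum_{(k,l)\in\cU}F_{k,l}$ when $|\cU|=s$. Because $(t,t)\in\cT$, the set $X$ is $(t,t)$-regular and Lemma~\ref{lem:projectivedesign}(i) immediately shows that $P(L)$ is a projective $t$-design. It remains to bound $\deg(P(L))$ and to identify the projective scheme as a quotient of $X$'s scheme.

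I would introduce the averaging map $\pi:\mathcal{A}(X)\to M_{|L|}(\cx)$ defined by $\pi(M)_{[x],[y]}:=\sum_{y'\in[y]}M_{x,y'}$ for any fixed representative $x\in[x]$. Using diagonal $\langle\omega I\rangle$-invariance of matrices in $\mathcal{A}(X)$, one checks $\pi$ is well-defined and is an algebra homomorphism. A short computation exploiting the homogeneity identity $g_{k,l}(\omega z)=\omega^{k-l}g_{k,l}(z)$ then gives $\pi(F_{k,l})=0$ unless $l\equiv k\pmod n$, and otherwise $\pi(F_{k,l})_{[x],[y]}=\tfrac{1}{|L|}g_{k,l}(x^*y)$. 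The hypothesis $\cU*\cU\sbs\cT$ forces $\cU\sbs\{(k,l):k+l\leq t\}$ (taking $(k+l,k+l)=(k,l)*(k,l)\in\cU*\cU\sbs\cT$ together with maximality of $t$), and combined with $t\leq n$ the congruence collapses to $l=k\leq\lfloor t/2\rfloor$, except for a single boundary possibility $\{k,l\}=\{0,t\}$ when $n=t$. The surviving $\pi(F_{k,k})$ are functions of $\abs{x^*y}^2$ since each $g_{k,k}$ is, and therefore lie in the Bose-Mesner algebra $\mathcal{B}(P(L))$ of the projective scheme.

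To complete (i) and prove (ii), I would show that each adjacency matrix $M_\beta$ of $P(L)$ indexed by $\beta=\abs{\alpha}^2$ equals $\tfrac{1}{n}\sum_{\alpha':\abs{\alpha'}^2=\beta}\pi(A_{\alpha'})$, using the hypothesis that each nonzero absolute value in $\{1\}\cup A(L)$ lifts to exactly one $\omega$-orbit of size $n$ in $\{1\}\cup A(X)$. Hence $\mathcal{B}(P(L))\sbs\pi(\mathcal{A}(X))$ and, by the preceding paragraph, $\mathcal{B}(P(L))$ is spanned by $\{\pi(F_{k,k}):(k,k)\in\cU\}\cup\{\pi(E_0)\}$, so $\dim\mathcal{B}(P(L))\leq\lfloor t/2\rfloor+2$, giving $\deg(P(L))\leq t/2+1$. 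Part (ii) then follows because the quotient relations on $L$ induced by $\omega$-orbits on $\{1\}\cup A(X)$ are precisely the $\abs{x^*y}^2$-relations of $P(L)$, exhibiting the projective scheme as the quotient of $(X,\{R_\alpha:\alpha\in A(X)\})$ under $G=\langle\omega I\rangle$.

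The main obstacle is the boundary case $n=t$, in which $F_{0,t},F_{t,0}$ may lie in $\cU$ and $\pi(F_{0,t}),\pi(F_{t,0})$ are nonzero but depend on $(\overline{x^*y})^t$ and $(x^*y)^t$ respectively rather than on $\abs{x^*y}^2$; one must show these images are orthogonal in $M_{|L|}(\cx)$ to $\mathcal{B}(P(L))$ so as not to inflate $\dim\mathcal{B}(P(L))$. A second delicate point is ensuring that $\pi(E_0)$ still lies in $\mathcal{B}(P(L))$ in that same boundary case, which reduces to verifying that $E_0$ restricted modulo those boundary contributions is a polynomial in $\abs{x^*y}^2$.
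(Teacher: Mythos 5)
Your argument splits into two parts of different character. For part (ii) and for the $t$-design claim in (i) you are on the paper's track: the paper also gets the $t$-design property from Lemma~\ref{lem:projectivedesign}(i), and its proof of (ii) is exactly the quotient-by-imprimitivity construction you describe, except that it works with intersection numbers ($i\sim j$ iff $p_{i,k}^j\neq 0$ for some $k$ with $\al_k$ a power of $\om$, which by unit-vector rigidity means $\al_j=\om^m\al_i$, and by the ``exactly $n$ elements per absolute value'' hypothesis this is the same as $\abs{\al_i}=\abs{\al_j}$) rather than with your explicit averaging homomorphism $\pi$. Your computation of $\pi(F_{k,l})$ and the identity $M_\beta=\tfrac1n\sum_{\abs{\al'}^2=\beta}\pi(A_{\al'})$ are correct and make the same point more explicitly.

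Where you genuinely diverge is the degree bound $r\leq t/2+1$, and there your route has a real gap that you yourself flag but do not close. Your bound rests on $\dim\mathcal{B}(P(L))\leq\lfloor t/2\rfloor+2$, which you obtain from the containment $\mathcal{B}(P(L))\sbs\pi(\cA(X))$ together with the vanishing of $\pi(F_{k,l})$ for $k\not\equiv l\pmod n$. In the admissible boundary case $n=t$ with $(t,0),(0,t)\in\cU$, the images $\pi(F_{t,0})$ and $\pi(F_{0,t})$ survive (they are well defined on classes since $\om^t=1$, and are proportional to $(x^*y)^t$ and $(\overline{x^*y})^t$), so the containment only gives $\dim\mathcal{B}(P(L))\leq\lfloor t/2\rfloor+4$, hence $r\leq\lfloor t/2\rfloor+3$, which is too weak; symmetry/reality of the $M_\beta$ only pairs the two boundary idempotents and still loses one. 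The paper avoids all of this with a two-line count that you should adopt: $\cU\sbs\{(k,l):k+l\leq t\}$ gives $s\leq\abs{\cU}\leq\tfrac{(t+1)(t+2)}{2}$, while the ``exactly $n$ per absolute value'' hypothesis gives $n(r+1)=s+1$ if $0\notin A(X)$ and $nr+1=s+1$ otherwise; combining these with $t\leq n$ yields $r\leq t/2+1$ directly, uniformly in the boundary case and with no reference to the Bose--Mesner algebra. What your approach buys in exchange is an explicit description of the primitive idempotents of the quotient scheme as the $\pi(F_{k,k})$, which the paper does not record; but as a proof of (i) it is incomplete as it stands.
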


\begin{proof}
\begin{enumerate}[(i)]
\item  By Lemma~\ref{lem:projectivedesign}, $P(L)$ is a $t$-design. Since $(t+1,t+1)\not\in\mathcal{T}$ we know $\mathcal{U}$ is contained in $\{(k,l): k+l\leq t\}$, and so $|\mathcal{U}|\leq \frac{(t+1)(t+2)}{2}$. Let $r$ denote the degree of $P(L)$. 

Consider first the case when $0\not\in A(X)$: then comparing $|A(L)|$ and $|A(X)|$ we have $n(r+1) = s+1$. Since $t \leq n$, 
\begin{align*}
n(r+1) = s+1\leq |\mathcal{U}|+1\leq \frac{(t+1)(t+2)}{2}+1\leq n(\frac{t}{2}+2).
\end{align*}
Similarly, when $0\in A(X)$,
\begin{align*}
nr+1 = s+1\leq |\mathcal{U}|+1\leq \frac{(t+1)(t+2)}{2}+1\leq n\frac{t+2}{2}+1.
\end{align*}
In either case, $r\leq t/2+1$.
\item Set $R_i=\{(x,y)\in X:x^*y=w_n^i\}$.
The set $\cup_{i=0}^{n-1}R_i$ is an equivalence relation, and 
let $\Sigma$ be the system of imprimitivity.
Then $\Sigma$ coincides with $P(L)$.

Define an equivalence relation on $\{0,1,\ldots,s\}$ as follows: $i$ and  $j$ are equivalent if and only if $p_{i,k}^j\neq0$ for some $k$ such that $\alpha_k$ is a multiple of $w_n$. It follows that $i$ and $j$ are equivalent if and only if $\al_i$ and $\al_j$ have the same absolute value. Therefore the scheme derived from the projective design $P(L)$ coincides with the quotient scheme of the scheme derived from complex spherical design $X$.     
\end{enumerate}
\end{proof}

\begin{example}\label{27}\cite{cox}
Let $\om$ be a primitive third root of unity and let $X \sbs \Omega(3)$ be a set of vectors $\frac{1}{\sqrt{2}}(0, w^\mu,- w^\nu)$, $\frac{1}{\sqrt{2}}(- w^\mu,0, w^\nu)$, $\frac{1}{\sqrt{2}}(w^\mu,- w^\nu,0)$ for $\mu,\nu\in\{0,1,2\}$.
Then $|X| = 27$, $A(X)=\{w^j,-\frac{1}{\sqrt{2}}w^j: 0\leq j\leq2\}$ with degree $s=5$, and $X$ is $\mathcal{T}$-design where
$$\cT=\cl(\{(5,0),(3,2),(2,3),(0,5)\}).$$
We can take $\mathcal{U}=\{(i,j)\in\nn^2: i+j\leq2\}$. Then $|\mathcal{U}|=s+1$, and by Theorem~\ref{thm:S1}, $X$ carries a nonsymmetric association scheme.
Note that $X$ satisfies the absolute bound in Theorem~\ref{thm:absbound} (i), and
Theorem~\ref{thm:complexreal} applies so we obtain tight real $4$-design in $S^{6}$.
Theorem~\ref{thm:complexprojective} also applies, so we obtain tight projective $2$-design in $\mathbb{C}\mathbb{P}^{2}$.

\end{example}
\begin{example}{\bf(MUBs in $\cx^2$)} 
Let $L$ be a complex set of MUBs in $\mathbb{C}^2$ with the inner product set  $A(L)=\{\frac{\pm1\pm\mathrm{i}}{2},0\}$. For example,
\[
L = \{(1,0),(0,1)\} \cup \{\tfrac{1+\mathrm{i}}{2}(1,\mathrm{i}^j): 0 \leq j \leq 3\}.
\]
Define $X$ to be a $4$-antipodal cover of $L$, so $A(X)=\{-1,\pm \mathrm{i},\frac{\pm1\pm\mathrm{i}}{2},0\}$.
Then $X$ is a $\mathcal{T}$-design where
$\cT=\cl(\{(7,0),(4,3),(3,4),(0,7)\})$, and we can take $\mathcal{U}=\cl(\{(3,0),(1,1),(0,3)\})$ such that $\mathcal{U}*\mathcal{U}\sbs\cT$ and $|\cU|=s$.
From Theorem~\ref{thm:S1} it follows Theorem $X$ carries a nonsymmetric association scheme.
\end{example}

\section{Association schemes from antipodal designs}\label{sec:antipodal}

Even if the assumptions of Theorem~\ref{thm:S1} do not hold, the inner product relations of a finite set $X \sbs \Om(d)$ might still carry an association scheme. For example, suppose $X$ is an $n$-antipodal set and  the first $n$ angles of $A(X)$ are $\alpha_0 = 1, \alpha_1 = \om_n,\ldots, \alpha_{n-1} = \om_n^{n-1}$. If either $i$ or $j$ is less than $n$, then the intersection number $p_{i,j}(x,y)$ depends only on $i,j$, and $x^*y$. Moreover, $p_{i,j}(x,y)=p_{j,i}(x,y)$. In such situations $X$ often gives rise to an association scheme, as Theorem~\ref{thm:S8} below shows.

We also show a sufficient condition for a finite set $X \sbs \Om(d)$ to satisfy another regularity property: a subset $X \sbs \Omega(d)$ with the inner product set $A(X)=\{\alpha_1\ldots,\alpha_s\}$ is called \defn{inner product invariant} if $k_i(x):=|\{y\in X: x^*y=\alpha_i\}|$ does not depend on the choice of $x\in X$ for each $1\leq i\leq s$.
The value $k_i(x)=|\{y\in X: x^*y=\alpha_i\}|$ is called the $i$-th valency of $x\in X$.
When $X$ is inner product invariant, $k_i(x)$ is abbreviated as $k_i$. 
It is clear that $X$ is inner product invariant if and only if the all ones vector is an eigenvector of $A_i$ for each $1\leq i\leq s$.

In order to establish that a $\cT$-design is inner product invariant or carries an association scheme, we introduce a matrix which we call the \defn{Jacobi matrix}. The Jacobi matrix $G$ has rows indexed by a set of inner products $A$ and columns indexed by a set $\cU \sbs \nn^2$ such that $\abs{A} = \abs{\cU}$, and 
\[
G=(g_{k,l}(\alpha))_{\substack{\al \in A\\ (k,l)\in\mathcal{U}}}.
\]
We require the matrix to be nonsingular. Note that if $\cU$ is a lower set, $G$ can be obtained from the matrix of monomials $M = (\al^k\overline{\al}^l)_{\substack{\al \in A\\ (k,l)\in\mathcal{U}}}$ by elementary row and column operations. If $\cU = \{(0,0),(1,0),\ldots,(t,0)\}$, then $G$ is nonsingular because it can be obtained by elementary operations from a Vandermonde matrix. But in general it can be singular. For example, if $\cU$ is contained in $\{(k,k): k \in \nn\}$ and $A$ contains both $\al$ and $\overline{\al}$ for some $\al \notin \re$, then $G$ contains two identical rows and so $G$ is singular. 

\begin{theorem}\label{thm:S8}
Let $X$ have inner product set $A(X)=\{\alpha_1,\ldots,\alpha_s\}$.
\begin{enumerate}[(i)]
\item Suppose that $X$ is a $\cU$-design with $|\mathcal{U}|=s+1$ and set $\al_0 = 1$. If the matrix $G=(g_{k,l}(\alpha_i))_{\substack{0\leq i\leq s\\ (k,l)\in\mathcal{U}}}$ is nonsingular, then $X$ is inner product invariant.
\item Suppose that $X$ is a $\mathcal{U}*\mathcal{U}$-design and there is some index set $I\sbs \{1,2,\dots,s\}$ with $|I| = |\mathcal{U}|$ such that if either $i$ or $j$ is not in $I$, then the intersection numbers $p_{i,j}(x,y)$ satisfy $p_{i,j}(x,y)=p_{j,i}(x,y)$ and depend only on $i$, $j$, and $x^*y$. If the matrix $G=(g_{k,l}(\alpha_i))_{\substack{i\in I\\ (k,l)\in\mathcal{U}}}$ is nonsingular, then $X$ carries an association scheme.
\end{enumerate}
\end{theorem}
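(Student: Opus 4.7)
The key tool throughout is the matrix
\[
F_{k,l} = \frac{1}{|X|} H_{k,l} H_{k,l}^* = \frac{1}{|X|} \sum_{i=0}^{s} g_{k,l}(\al_i)\, A_i,
\]
provided by Koornwinder's addition theorem. The plan is, in both parts, to use the design hypothesis through Lemma~\ref{lem:designchar} to impose orthogonality relations on the $F_{k,l}$, and then to use nonsingularity of the Jacobi matrix $G$ to invert those relations and extract structural information about the $A_i$.

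For part (i), the constant function $1$ is an orthonormal basis of $\Harm(0,0)$, so $H_{0,0} = \mathbf{1}$, the all-ones vector on $X$. Since $X$ is a $\cU$-design, Lemma~\ref{lem:designchar}(ii) gives $H_{k,l}^* \mathbf{1} = 0$ for every $(k,l) \in \cU \setminus \{(0,0)\}$, whence $F_{k,l}\mathbf{1} = 0$ for these indices while $F_{0,0}\mathbf{1} = \mathbf{1}$. Writing $v_i := A_i \mathbf{1}$, whose $x$-entry is the valency $k_i(x)$, these identities become the linear system
\[
\sum_{i=0}^{s} g_{k,l}(\al_i)\, v_i = |X|\, \de_{(k,l),(0,0)}\, \mathbf{1}, \qquad (k,l) \in \cU.
\]
The coefficient matrix of this square system is $G^T$, which is nonsingular, so the vectors $v_i$ are uniquely determined. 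Since every right-hand side is a scalar multiple of $\mathbf{1}$, each $v_i$ must itself equal $k_i\mathbf{1}$ for a constant $k_i$, which is exactly inner product invariance.

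For part (ii), the $\cU*\cU$-design condition together with Lemma~\ref{lem:designchar}(ii) gives $F_{k,l} F_{k',l'} = \de_{(k,l),(k',l')} F_{k,l}$, so $\{F_{k,l}:(k,l)\in\cU\}$ is a set of pairwise orthogonal idempotents. The hypothesis on intersection numbers says that whenever $i \notin I$ or $j \notin I$, $A_i A_j = A_j A_i = \sum_k p_{i,j}^k A_k$ already lies in the linear span $\cA$ of the $A_i$; in particular each $A_j$ with $j\notin I$ (including $A_0$) commutes with every $A_i$ and hence with every $F_{k,l}$. Rearranging the defining relation for $F_{k,l}$,
\[
\sum_{i \in I} g_{k,l}(\al_i)\, A_i = |X|\, F_{k,l} - \sum_{i \notin I} g_{k,l}(\al_i)\, A_i, \qquad (k,l)\in\cU.
\]
Since $|I| = |\cU|$ and $G$ is nonsingular, this square system inverts to express each $A_i$ with $i\in I$ as a linear combination of the $F_{k,l}$ and the $A_j$ with $j\notin I$. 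Thus $\cA$ is generated by $\{F_{k,l}:(k,l)\in\cU\}\cup\{A_j:j\notin I\}$. All generators pairwise commute, so $\cA$ is commutative; closure follows because $F_{k,l}F_{k',l'}$ stays among the $F$'s, $A_j A_{j'}$ with $j$ or $j'\notin I$ lies in $\cA$ by hypothesis, and $F_{k,l} A_j = |X|^{-1}\sum_i g_{k,l}(\al_i) A_i A_j$ with $j\notin I$ also lies in $\cA$ by the same hypothesis applied termwise. So $\cA$ is a commutative matrix algebra closed under multiplication, and combined with conditions (i)--(iii), which are clear from the construction of the $A_i$, this makes $\{A_0,\ldots,A_s\}$ an association scheme.

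The main technical point is arranging the last step so that no product $A_i A_{i'}$ with both $i,i' \in I$ is ever used directly: one first swaps those ``unknown'' matrices for the orthogonal idempotents $F_{k,l}$, whose multiplicative behaviour is trivial, and every residual product then involves an index outside $I$ where the intersection-number hypothesis gives control. Nonsingularity of $G$ is precisely what allows this substitution; without it, the span of $\{F_{k,l}\}\cup\{A_j:j\notin I\}$ would miss part of $\cA$ and the products $A_i A_{i'}$ with $i,i'\in I$ would remain out of reach.
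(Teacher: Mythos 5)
Your proof is correct. Part (i) is essentially the paper's argument: both premultiply the design relation $H_{k,l}^*H_{0,0}=|X|\de_{k,0}\de_{l,0}I$ to get that $\mathbf{1}$ is an eigenvector of each $F_{k,l}$, and then use nonsingularity of $G$ to transfer this to the $A_i$; your phrasing via the valency vectors $v_i=A_i\mathbf{1}$ and the system with coefficient matrix $G^T$ is just a concrete rendering of the paper's ``$\{F_{k,l}\}$ is a basis of $\cA$'' step. Part (ii), however, takes a genuinely different route. The paper fixes $x,y$, takes the $(x,y)$-entry of $F_{k,l}F_{k',l'}=\de_{k,k'}\de_{l,l'}|X|F_{k,l}$, isolates the unknown intersection numbers $\{p_{i,j}(x,y):i,j\in I\}$, and solves the resulting $|I|^2\times|I|^2$ linear system whose coefficient matrix is $G\otimes G$; well-definedness and the symmetry $p_{i,j}=p_{j,i}$ are read off from the solution. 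You instead argue at the level of the algebra: invert $G$ once to replace the ``unknown'' matrices $\{A_i:i\in I\}$ by the orthogonal idempotents $\{F_{k,l}\}$ in a spanning set for $\cA$, observe that the resulting generators $\{F_{k,l}\}\cup\{A_j:j\notin I\}$ pairwise commute and have pairwise products in $\cA$ (every residual product involves an index outside $I$, where the hypothesis applies), and conclude via the stated criterion that a commutative span of $(0,1)$-matrices satisfying (i)--(iii) that is closed under multiplication is an adjacency algebra. The two arguments rest on the same three ingredients (the idempotent relations from the $\cU*\cU$-design condition, the partial regularity of intersection numbers off $I$, and nonsingularity of $G$, noting $G\otimes G$ is nonsingular iff $G$ is), but yours avoids the entrywise computation and the Kronecker product entirely and makes the commutativity of $\cA$ transparent, while the paper's version has the minor advantage of producing the intersection numbers $p_{i,j}^k$ for $i,j\in I$ explicitly as the solution of a linear system. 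Both are complete proofs.
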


\begin{proof}
\begin{enumerate}
\item[(1)] Define a vector space $\mathcal{A}=\spn\{A_0,A_1,\ldots,A_s\}$.
From Theorem~\ref{thm:addition}, it follows that $F_{k,l}=\frac{1}{|X|}\sum_{i=0}^sg_{k,l}(\alpha_i)A_i$ for each $(k,l)\in\nn^2$.
Since $\{A_0,A_1,\ldots,A_s\}$ is a basis of $\mathcal{A}$ and the assumption that $G$ is nonsingular, 
$\{F_{k,l}:(k,l)\in \cU\}$ is also a basis of $\mathcal{A}$.

Since $X$ is a $\cU$-design, Lemma~\ref{lem:designchar} shows 
$H_{k,l}^*H_{0,0}=|X|\delta_{k,0}\delta_{l,0}I$ for each $(k,l)\in\cU$.
Premultipling $H_{k,l}$ yields $F_{k,l}H_{0,0}=|X|\delta_{k,0}\delta_{l,0}H_{0,0}$.
The matrix $H_{0,0}$ is the all ones column vector, so this implies the all ones vector is an eigenvector of $F_{k,l}$.
Therefore all ones vectors is also an eigenvector of $A_i$ for each $0\leq i\leq s$, so $X$ is inner product invariant.

\item[(2)] It suffices to prove that for any $x,y\in X$ and  $i,j\in I$, the intersection number $p_{i,j}(x,y)$ depends only on $i,j$ and $x^*y$,  and satisfies $p_{i,j}(x,y)=p_{j,i}(x,y)$.
Fix $x,y\in X$. For $(k,l),(k',l')\in\mathcal{U}$, it follows that 
\begin{align}
F_{k,l}F_{k',l'}=\delta_{k,k'}\delta_{l,l'}|X|F_{k,l}.
\end{align}
Using $F_{k,l}=\sum_{i=0}^s g_{k,l}(\alpha_i)A_i$, we have 
\begin{align}\label{S2}
\Big(\sum_{i=0}^s g_{k,l}(\alpha_i)A_i\Big)\Big(\sum_{j=0}^s g_{k',l'}(\alpha_j)A_j\Big)=\delta_{k,k'}\delta_{l,l'}|X|\sum_{i=0}^s g_{k,l}(\alpha_i)A_i.
\end{align}

Let $L = \{1,\ldots,s\}\times\{1,\ldots,s\} \setminus I\times I$. Then the $(x,y)$-entry of LHS of equation (\ref{S2}) is 
\begin{align}\label{S3}
\lefteqn{\Big(\Big(\sum_{i=0}^s g_{k,l}(\alpha_i)A_i\Big)\Big(\sum_{j=0}^s g_{k',l'}(\alpha_j)A_j\Big)\Big)_{xy}}\notag\\
& \quad =\sum_{i,j=0}^s g_{k,l}(\alpha_i)g_{k',l'}(\alpha_j)p_{i,j}(x,y)\notag \displaybreak[0]\\
& \quad =\sum_{i,j\in I} g_{k,l}(\alpha_i)g_{k',l'}(\alpha_j)p_{i,j}(x,y)+p_{0,0}(x,y)+p_{0,h}(x,y)+p_{\tilde{h},0}(x,y)\notag \displaybreak[0]\\
&\qquad +\sum_{(i,j)\in L} g_{k,l}(\alpha_i)g_{k',l'}(\alpha_j)p_{i,j}(x,y),\displaybreak[0]
\end{align}
where $h$ is the index such that $x^*y = \alpha_h$. The $(x,y)$-entry of RHS of equation (\ref{S2}) is
\begin{align}\label{S4}
\delta_{k,k'}\delta_{l,l'}|X| g_{k,l}(\alpha_h).
\end{align}
Substituting (\ref{S3}) and (\ref{S4}) into (\ref{S2}), we obtain
\begin{align}\label{S5}
\sum_{i,j\in I} g_{k,l}(\alpha_i)g_{k',l'}(\alpha_j)p_{i,j}(x,y)=\delta_{k,k'}\delta_{l,l'}|X| g_{k,l}(\alpha_h)-p_{0,0}(x,y)\notag \displaybreak[0]&\\
-p_{0,h}(x,y)-p_{\tilde{h},0}(x,y) -\sum_{(i,j)\in L} g_{k,l}(\alpha_i)g_{k',l'}(\alpha_j)p_{i,j}(x,y)&.
\end{align}
The assumption (ii) implies that the RHS of equation (\ref{S5}) depends only on $\alpha_h$.
For $(k,l),(k',l')\in\mathcal{U}$, equation (\ref{S5}) yields a system of linear equations whose unknowns are $\{p_{i,j}(x,y): i,j\in I\}$.
Its coefficient matrix $G\otimes G$ is nonsingular from the assumption (iii).
Therefore $p_{i,j}(x,y)$ for $i,j\in I$ depends only on $\alpha_h$ and does not depend on the choice of $x$, $y$ satisfying $\alpha_h=x^*y$.

By replacing $(k,l),(k',l'),i,j$ with  $(k',l'),(k,l),j,i$ in equation (\ref{S5}), and recalling that  $p_{i,j}(x,y)=p_{j,i}(x,y)$ for $i,j\in L$, we see that the system of equations for $\{p_{i,j}(x,y): i,j \in I\}$ is symmetric in $i$ and $j$. Therefore $p_{i,j}(x,y)=p_{j,i}(x,y)$ holds for any $i,j \in I$, and hence $X$ carries an association scheme.
\end{enumerate}
\end{proof}

The following give examples that have the property of inner product invariant, do not carry association schemes.
\begin{example}\cite{cox}
Let $\lambda=(-1-\sqrt{7}\mathrm{i})/2$, and let $X$ be a set of permutations of vectors of the form
\begin{align*}
&\tfrac{\pm1}{2\sqrt{2}}( \lambda^2, \lambda^2,0),& &\tfrac{\pm 1}{2\sqrt{2}}(\lambda+2,-\lambda-2,0),& &\tfrac{\pm 1}{\sqrt{2}}(\lambda,0,0), \\
&\tfrac{\pm 1}{2\sqrt{2}}(\lambda,\lambda,2),& &\tfrac{\pm 1}{2\sqrt{2}}(\lambda,-\lambda,2),&  &
\tfrac{\pm 1}{2\sqrt{2}}(-\lambda,-\lambda,2).
\end{align*}
Then $|X|=42$. $A(X)=\{-1,\pm\tfrac{1}{2},0,\tfrac{\pm 1\pm \sqrt{7}\mathrm{i}}{4}\}$ with degree $s=8$, and $X$ is $\mathcal{T}$-design where $\mathcal{T}=\cl(\{(3,2),(2,3)\})$.
We can take $\cS=\cl(\{(3,2),(0,2)\})$, 
so the determinant of 
$G=(g_{k,l}(\alpha))_{\substack{\al\in A(X)\\ (k,l)\in\cS}}$ is $-3087/2048$ and therefore $G$ is nonsingular.
Hence by Proposition~\ref{thm:S8}(i), $X$ is inner product invariant.
However, $X$ does not carry an association scheme.
Indeed, if we set 
\[
(\al_i)_{i=0}^8=(1,-1,0,\tfrac{1}{2},-\tfrac{1}{2},\tfrac{ 1+ \sqrt{7}\mathrm{i}}{4},\tfrac{-1-\sqrt{7}\mathrm{i}}{4},\tfrac{1-\sqrt{7}\mathrm{i}}{4},\tfrac{-1+ \sqrt{7}\mathrm{i}}{4})
\]
then $A_iA_j\in \cA=\text{Span}(A_0,\ldots,A_8)$ only for $i=0,1$, $j=0,1$, or $(i,j)\in\{2,3,4,5\}^2\cup\{6,7\}^2\cup\{8,9\}^2$.

On the other hand, $\phi(X)$ with its inner products carries a symmetric association scheme.
\end{example}

\begin{example}\cite{cox}
Let $\lambda=(-1-\sqrt{7}\mathrm{i})/2$, and let $X$ be a set of permutation of vectors of the form
\begin{align*}
&\tfrac{1}{\sqrt{6}}(\pm \lambda,\pm \lambda,\pm \lambda),& &\tfrac{1}{\sqrt{6}}(\pm \lambda^2,\pm 1,\pm  1),& &\tfrac{1}{\sqrt{6}}(\pm \co{\lambda}^2,\pm \co{\lambda},0).
\end{align*}
Then $|X|=56$, $A(X)=\{-1,\pm\tfrac{1}{2},0,\tfrac{\pm 1\pm \sqrt{7}\mathrm{i}}{4}\}$ with degree $s=8$, and $X$ is $\mathcal{T}$-design where $\mathcal{T}=\cl(\{(3,2),(2,3)\})$.
We can take $\cS=\cl(\{(3,2)\})$, 
so the determinant of 
$G=(g_{k,l}(\alpha))_{\substack{\al\in A(X)\\ (k,l)\in\cS}}$ is $-11014635520000000000\sqrt{7}\mathrm{i}/43046721$ and therefore $G$ is nonsingular.
Hence by Proposition~\ref{thm:S8}(i), $X$ is inner product invariant.
However, $X$ does not carry an association scheme.
Indeed, if we set 
\[
(\al_i)_{i=0}^{11}=(1,-1,\tfrac{1}{3},-\tfrac{1}{3},\tfrac{2}{3},-\tfrac{2}{3},\tfrac{\sqrt{7}\mathrm{i}}{3},-\tfrac{\sqrt{7}\mathrm{i}}{3},\tfrac{1+\sqrt{7}\mathrm{i}}{6},\tfrac{-1-\sqrt{7}\mathrm{i}}{6},\tfrac{1-\sqrt{7}\mathrm{i}}{6},\tfrac{-1+\sqrt{7}\mathrm{i}}{6})
\]
then $A_iA_j\in \cA=\text{Span}(A_0,\ldots,A_{11})$ only for $i=0,1$, $j=0,1$, or $(i,j)\in\{2,3,4,5\}^2\cup\{6,7\}^2\cup\{8,9\}^2\cup\{10,11\}^2\cup\{4,5\}\times\{6,7\}\cup\{6,7\}\times\{4,5\}$.

On the other hand, $\phi(X)$ with its inner products carry a symmetric association scheme.
\end{example}

The following give examples fitting into Theorem~\ref{thm:S8}(ii).
\begin{example}\cite{cox}
Let $\om$ be a $6$-th primitive root of unity, and let $X$ be a set of vectors of the form
\begin{align*}
&\tfrac{1}{\sqrt{3}}(0,\pm w^\mu,\pm w^\nu,\pm w^\lambda), (\pm\textrm{i}w^\lambda,0,0,0), \tfrac{1}{\sqrt{3}}(\mp w^\mu,0,\pm w^\nu,\pm w^\lambda),(0,\pm\textrm{i}w^\lambda,0,0), \\
&\tfrac{1}{\sqrt{3}}(\pm w^\mu,\mp w^\nu,0,\pm w^\lambda),(0,0,\pm\textrm{i}w^\lambda,0), 
\tfrac{1}{\sqrt{3}}(\mp w^\mu,\mp w^\nu,\mp w^\lambda,0), 
(0,0,0,\pm\textrm{i}w^\lambda)
\end{align*}
for $\lambda,\mu,\nu\in\{0,1,2\}$.
Then $|X|=240$ and $A(X)=\{0,w^j,\tfrac{\textrm{i}}{\sqrt{3}}w^j: 0\leq j\leq5\} \setminus \{1\}$ with degree $s=12$, and $X$ is $\mathcal{T}$-design where $\mathcal{T}=\{(i,j)\in\nn^2: i+j\leq7\}$.
Set
\[
(\al_i)_{i=0}^{12} = (1,\om,\ldots,\om^5,0,\tfrac{\textrm{i}}{\sqrt{3}},\tfrac{\textrm{i}}{\sqrt{3}}\om,\ldots,\tfrac{\textrm{i}}{\sqrt{3}}\om^5).
\]
We can take $\mathcal{U}=\{(i,j)\in\nn^2: i+j\leq2\}\cup \{(3,0)\}$, so the determinant of 
$G=(g_{k,l}(\alpha_i))_{\substack{6\leq i\leq 12\\ (k,l)\in\mathcal{U}}}$ is $8\mathrm{i}/9\sqrt{3}$ and therefore $G$ is nonsingular.
Hence by Theorem~\ref{thm:S8}, $X$ carries a nonsymmetric association scheme.
\end{example}

\begin{example}\label{756}\cite{cox}
Let $\om$ be a $6$-th primitive root of unity, and let $X$ be a set of permutations of vectors 
\begin{align*}
& ((-1)^{k_1},(-1)^{k_2},0,0,0,0) \mbox{ and } \\
& ((-1)^{k_1}\sqrt{3}\textrm{i},(-1)^{k_2},(-1)^{k_3},(-1)^{k_4},(-1)^{k_5},(-1)^{k_6}),
\end{align*}
where $k_1,\ldots,k_6\in\{0,1\}$ and $k_1+\cdots+k_6$ is even.
Then $|X| = 756$, $A(X)=\{0,w^j,\frac{2}{3}w^j: 0\leq j\leq5\} \setminus \{1\}$ with degree $s=12$, and $X$ is $\mathcal{T}$-design where $\cT=\cl(\{(5,3),(3,5)\})$.
Set
\[
(\al_i)_{i=0}^{12} = (1,\om,\ldots,\om^5,0,\tfrac{2}{3},\tfrac{2}{3}\om,\ldots,\tfrac{2}{3}\om^5).
\]
We take $\mathcal{U}=\{(i,j)\in\nn^2: i+j\leq2\}\cup \{(3,0)\}$, 
so the determinant of 
$G=(g_{k,l}(\alpha_i))_{\substack{6\leq i\leq 12\\ (k,l)\in\mathcal{U}}}$ is $-27/256$ and again by Theorem~\ref{thm:S8} $X$ carries a nonsymmetric association scheme.
\end{example}

\begin{example}({\bf Complex MUBs in $\cx^{2^m}$, $m$ even})\label{complexMUBeven}
Let $L$ be a set of complex MUBs in $\mathbb{C}^d$ with $|L|=d(d+1)$ and $A(L)=\{\pm\frac{1}{\sqrt{d}},\pm\frac{\mathrm{i}}{\sqrt{d}},0\}$. (Such MUBs can be constructed from $\zz_4$-Galois rings \cite{kr}).
Define $X$ to be a $4$-antipodal cover of $L$, so $A(X)=\{\pm\mathrm{i},-1,\pm\frac{1}{\sqrt{d}},\pm\frac{\mathrm{i}}{\sqrt{d}},0\}$.
Set 
$$(\alpha_i)_{i=0}^8=\Big(1,\mathrm{i},-1,-\mathrm{i},\tfrac{1}{\sqrt{d}},\tfrac{\mathrm{i}}{\sqrt{d}},\tfrac{-1}{\sqrt{d}},\tfrac{-\mathrm{i}}{\sqrt{d}},0\Big).$$ 

Then $X$ is inner product invariant with valencies 
$$(k_i)_{i=0}^8=(1,1,1,1,d^2,d^2,d^2,d^2,4(d-1)),$$
and $X$ is a $\mathcal{T}$-design with $\cT=\cl(\{(3,2),(2,3)\}).$
If we take $\mathcal{U}=\cl(\{(1,1),(2,1)\})$, then $\mathcal{U}*\mathcal{U}\subset\mathcal{T}$ and $|\mathcal{U}|=5$.
For $i$ or $j$ at most $3$, the intersection numbers $p_{i,j}(x,y)$ are determined by $x^*y$, and $p_{i,j}(x,y)=p_{j,i}(x,y)$. From direct calculation the determinant of the matrix $G=(g_{k,l}(\alpha_i))_{\substack{4\leq i\leq 8\\ (k,l)\in\mathcal{U}}}$ is $16\mathrm{i}/d^3$. Therefore $X$ carries a nonsymmetric association scheme by Theorem~\ref{thm:S8}.
\end{example}

\begin{example}({\bf Complex MUBs in $\cx^{2^m}$, $m$ odd})\label{complexMUBodd}
Let $L$ be a set of complex MUBs in $\mathbb{C}^d$ with $|L|=d(d+1)$ and $A(L)=\{\frac{\pm1\pm\mathrm{i}}{\sqrt{2d}},0\}$ (as in \cite{kr}).
Further define $X$ to be a $4$-antipodal cover of $L$. Then $A(X)=\{\frac{\pm1\pm\mathrm{i}}{\sqrt{2d}},0,\pm\mathrm{i},-1\}$.
Set 
$$(\alpha_i)_{i=0}^8=\Big(1,\mathrm{i},-1,-\mathrm{i},\tfrac{1+\mathrm{i}}{\sqrt{2d}},\tfrac{-1+\mathrm{i}}{\sqrt{2d}},\tfrac{-1-\mathrm{i}}{\sqrt{2d}},\tfrac{1-\mathrm{i}}{\sqrt{2d}},0\Big).$$ 

$X$ is inner product invariant with valencies 
$$(k_i)_{i=0}^8=(1,1,1,1,d^2,d^2,d^2,d^2,4(d-1)),$$
and $X$ is a $\mathcal{T}$-design, where $\cT=\cl(\{(7,0),(4,2),(2,4),(0,7)\})$.
We take $\mathcal{U}=\{(k,j): k+j\leq2\}$,
so $\mathcal{U}*\mathcal{U}\subset\mathcal{T}$, $|\mathcal{U}|=6$.
For $i$ or $j$ at most $3$, $p_{i,j}(x,y)$ is determined by $x^*y$ and $p_{i,j}(x,y)=p_{j,i}(x,y)$.
From direct calculation the determinant of the matrix $G=(g_{k,l}(\alpha_i))_{\substack{4\leq i\leq 8\\ (k,l)\in\mathcal{U}}}$ is $-32/d^3$.
Hence $X$ carries a nonsymmetric association scheme by Theorem~\ref{thm:S8}.

In this case, the scheme has a fusion scheme. The second eigenmatrix $Q$ is:
\begin{align}\label{S10}
\begin{pmatrix}
1   &    d   &   d   &   \frac{d(d+1)}{2}   &   \frac{d(d+1)}{2}   &   d^2   &   d^2   &   d^2-1&d\\
1   &    \cxi d   &   -\cxi d   &   \frac{-d(d+1)}{2}   &   \frac{-d(d+1)}{2}   &   -\cxi d^2   &   \cxi d^2   &   d^2-1&d\\
1   &    -\cxi d   &   \cxi d   &   \frac{-d(d+1)}{2}   &   \frac{-d(d+1)}{2}   &   \cxi d^2   &   -\cxi d^2   &   d^2-1&d\\
1   &    -d   &   -d   &   \frac{d(d+1)}{2}   &   \frac{d(d+1)}{2}   &   -d^2   &   -d^2   &   d^2-1&d\\
1   &   \frac{(1+\cxi )\sqrt{d}}{\sqrt{2}}&   \frac{(1-\cxi )\sqrt{d}}{\sqrt{2}}   &  \frac{\cxi (d+1)}{2}    &   \frac{-\cxi (d+1)}{2}   &   \frac{(-1+\cxi )\sqrt{d}}{\sqrt{2}}   &   \frac{(-1-\cxi )\sqrt{d}}{\sqrt{2}}   &   0&-1\\
1   &   \frac{(1-\cxi )\sqrt{d}}{\sqrt{2}}&   \frac{(1+\cxi )\sqrt{d}}{\sqrt{2}}   &  \frac{-\cxi (d+1)}{2}    &   \frac{\cxi (d+1)}{2}   &   \frac{(-1-\cxi )\sqrt{d}}{\sqrt{2}}   &   \frac{(-1+\cxi )\sqrt{d}}{\sqrt{2}}   &   0&-1\\
1   &   \frac{(-1+\cxi )\sqrt{d}}{\sqrt{2}}&   \frac{(-1-\cxi )\sqrt{d}}{\sqrt{2}}   &  \frac{-\cxi (d+1)}{2}    &   \frac{\cxi (d+1)}{2}   &   \frac{(1+\cxi )\sqrt{d}}{\sqrt{2}}   &   \frac{(1-\cxi )\sqrt{d}}{\sqrt{2}}   &  0& -1\\
1   &   \frac{(-1-\cxi )\sqrt{d}}{\sqrt{2}}&   \frac{(-1+\cxi )\sqrt{d}}{\sqrt{2}}   &  \frac{\cxi (d+1)}{2}    &   \frac{-\cxi (d+1)}{2}   &   \frac{(1-\cxi )\sqrt{d}}{\sqrt{2}}   &   \frac{(1+\cxi )\sqrt{d}}{\sqrt{2}}   &  0& -1\\
1&0&0&0&0&0&0&-d-1&d
\end{pmatrix}.
\end{align}
By the Bannai-Muzychuk criterion, the partition of adjacency matrices into sets $\{\{A_0\},\{A_1,A_2,A_8\},\{A_3\},\{A_4,A_5\},\{A_6,A_7\}\}$ and a partition of primitive idempotents $\{\{E_0\},\{E_1,E_2\},\{E_3,E_4,E_7\},\{E_5,E_6\},\{E_8\}\}$ gives a fusion scheme whose second eigenmatrix is 
\begin{align}\label{S11}
\tilde{Q}=
\begin{pmatrix}
1&2d        &(2d-1)(d+1)&2d^2&d\\
1&\sqrt{2d} &0          &-\sqrt{2d}&-1\\ 
1&-\sqrt{2d}&0          &\sqrt{2d} &-1\\
1&0         &-d-1       &0         &d\\
1&-2d       &(2d-1)(d+1)&-2d^2     &d
\end{pmatrix}. 
\end{align}
The fusion scheme coincides with an association scheme obtained from $(d+1)$ real mutually unbiased bases in $\mathbb{R}^{2d}$ \cite{LMO}.
\end{example}

\begin{example}({\bf SIC-POVMs in $\cx^2, \cx^8$})
Let $L$ be a SIC-POVM in $\Omega(d)$ with the inner product set $A(L)=\{\frac{\pm 1}{\sqrt{d+1}},\frac{\mathrm{\pm i}}{\sqrt{d+1}}\}$ (as in Example~\ref{ex:sicpovm}).
Define $X$ to be a $4$-antipodal cover of $L$, so $A(X)=\{\pm\frac{1}{\sqrt{d+1}},\pm\frac{\mathrm{i}}{\sqrt{d+1}},\pm\mathrm{i},-1\}$.
Set 
$$(\alpha_i)_{i=0}^7=\Big(1,\mathrm{i},-1,-\mathrm{i},\tfrac{1}{\sqrt{d+1}},\tfrac{\mathrm{i}}{\sqrt{d+1}},\tfrac{-1}{\sqrt{d+1}},\tfrac{-\mathrm{i}}{\sqrt{d+1}},\mathrm{i},-1,-\mathrm{i}\Big).$$ 
Then $X$ is inner product invariant with valencies 
$$(k_i)_{i=0}^7=(1,1,1,1,d^2-1,d^2-1,d^2-1,d^2-1),$$
and $X$ is a $\mathcal{T}$-design with $\cT=\cl(\{(3,2),(2,3)\})$.
If $\mathcal{U}=\cl(\{(2,0),(1,1)\}),$ then $\mathcal{U}*\mathcal{U}\subset\mathcal{T}$, and $|\mathcal{U}|=5$.
For $i$ and $j$ at most $3$, $p_{i,j}(x,y)$ is uniquely determined by $x^*y$ and $p_{i,j}(x,y)=p_{j,i}(x,y)$. The determinant of the matrix $G=(g_{k,l}(\alpha_i))_{\substack{4\leq i\leq 8\\ (k,l)\in\mathcal{U}}}$ is $-16\mathrm{i}d/(d+1)^3$, so $X$ carries a nonsymmetric association scheme by Theorem~\ref{thm:S8}.
\end{example}

\section{Designs from association schemes}\label{sec:designsfromschemes}
So far we have focused on association schemes obtained from nice complex spherical designs.
In this section we consider the converse, namely sufficient conditions for obtaining complex spherical designs from association schemes. 

Every symmetric association scheme can be associated with a real spherical design in a natural way. Let $(X,\{R_i\}_{i=0}^s)$ be a symmetric scheme such that $E_1$ has rank $d:=m_1$.
Since a primitive idempotent $E_1$ is a positive semidefinite matrix,
there exists a $|X|\times d$ matrix $U$ such that $\frac{d}{|X|}E_1=UU^T$.
We identify elements of $X$ as rows of $U$.
If $E_1$ has no repeated columns, then this embedding the scheme into real unit sphere $S^{d-1}$ is injective.
It is known that $X$ is always real spherical $2$-design of degree at most $s$, and it is $3$-design if and only if $q_{1,1}^1=0$ \cite{cgs}.
In case of $Q$-polynomial association schemes, see \cite{suda}.

Here, we consider the case of a nonsymmetric association scheme, which we associate with a complex spherical design in a natural way.
Let $(X,\{R_i\}_{i=0}^s)$ be a nonsymmetric scheme such that $E_1$ has rank $d:=m_1$ and has no repeated rows. Again we identify elements of $X$ as rows of $U$, where $\frac{d}{|X|}E_1=UU^*$.
This embedding of $X$ into $\Om(d)$ is injective. 
We will see (Corollary~\ref{cor:Tscheme}) that for $E_1^T\neq E_1$, $X$ is always a $\cT$-design where $\cT$ contains $\{(i,j)\in\nn^2: i+j\leq2\}$. Moreover, whether or not $\cT$ contains $(2,1)$ or $(3,0)$ depends on whether or not $q_{1,1}^{1} = 0$ or $q_{1,1}^{\widehat{1}} = 0$ respectively.

In fact, when a $\cT$-design in $\Om(d)$ carries an association scheme, the following theorem shows that we can characterize the integer pairs $(i,j)$ in $\cT$ using the Krein parameters of the scheme. Recall that $\widehat{h}$ is the index such that $E_{\widehat{h}} = E_h^T$.

\begin{theorem}\label{designAS}
Let $(X,\{R_i\}_{i=0}^s)$ be an association scheme, and identify the points of $X$ with unit vectors in $\Om(d)$ whose Gram matrix is a scalar multiple of $E_1$. Then $X$ is a $\mathcal{T}$-design in $\Om(d)$ if and only if for each $(i,j)\in\mathcal{T}$, the following holds:
$$\sum_{l_0,\ldots,l_i,h_0,\ldots,h_j=0}^sq_{0,0}^{l_0}q_{1,l_0}^{l_1}\cdots q_{1,l_{i-1}}^{l_i}q_{0,0}^{h_0}q_{1,h_0}^{h_1}\cdots q_{1,h_{j-1}}^{h_j}q_{l_i,\widehat{h_j}}^0=\begin{cases}\frac{d^{2i}}{\binom{d+i-1}{i}} &\text{ if } i=j,\\0 &\text{ if }i\neq j.  \end{cases}$$
\end{theorem}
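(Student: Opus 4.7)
The plan is to use Lemma~\ref{lem:moment} as the design criterion and then rewrite the moment sum in terms of entries of the primitive idempotent $E_1$ and its entrywise powers, expanding those via the Krein parameters. Specifically, $X$ is a $\cT$-design if and only if, for every $(i,j)\in\cT$,
\[
\sum_{x,y\in X}(x^*y)^i(y^*x)^j=\delta_{i,j}\frac{\abs{X}^2}{\binom{d+i-1}{i}},
\]
by Lemma~\ref{lem:moment} (noting $\binom{d+i-1}{d-1}=\binom{d+i-1}{i}$). Using the identification $U U^{*}=\frac{\abs{X}}{d}E_1$, where $U$ is the $\abs{X}\times d$ matrix whose rows are the vectors of $X$, one has $u_x^*u_y=\frac{\abs{X}}{d}(E_1)_{xy}$, and since $E_1$ is Hermitian (as all primitive idempotents of a commutative scheme are), $u_y^*u_x=\overline{u_x^*u_y}=\frac{\abs{X}}{d}(E_{\widehat{1}})_{xy}$.

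Therefore
\[
\sum_{x,y\in X}(x^*y)^i(y^*x)^j
=\Bigl(\frac{\abs{X}}{d}\Bigr)^{i+j}\mathbf{1}^{T}\bigl(E_1^{\circ i}\circ E_{\widehat{1}}^{\circ j}\bigr)\mathbf{1}.
\]
The central step is to expand $E_1^{\circ i}\circ E_{\widehat{1}}^{\circ j}$ in the idempotent basis by iterating the Krein relation $E_a\circ E_b=\abs{X}^{-1}\sum_c q_{a,b}^{c}E_c$. Writing $E_1=\sum_{l_1}\delta_{l_1,1}E_{l_1}=\sum_{l_0,l_1}q_{0,0}^{l_0}q_{1,l_0}^{l_1}E_{l_1}$ (using $q_{0,0}^{l_0}=\delta_{l_0,0}$ and $q_{1,0}^{l_1}=\delta_{l_1,1}$) and iterating the entrywise multiplication by $E_1$, we obtain
\[
E_1^{\circ i}=\frac{1}{\abs{X}^{i-1}}\sum_{l_0,\dots,l_i}q_{0,0}^{l_0}q_{1,l_0}^{l_1}\cdots q_{1,l_{i-1}}^{l_i}E_{l_i}.
\]
The analogous expansion for $E_{\widehat{1}}^{\circ j}$ uses the Krein parameters indexed by $\widehat{1}$, but the identity $q_{\widehat{a},\widehat{b}}^{\widehat{c}}=q_{a,b}^{c}$ (obtained by conjugating the entrywise product relation and using $\overline{E_a}=E_{\widehat{a}}$) lets us reindex $h_m\mapsto\widehat{h_m}$ and rewrite
\[
E_{\widehat{1}}^{\circ j}=\frac{1}{\abs{X}^{j-1}}\sum_{h_0,\dots,h_j}q_{0,0}^{h_0}q_{1,h_0}^{h_1}\cdots q_{1,h_{j-1}}^{h_j}E_{\widehat{h_j}},
\]
with the same ``$q_{1,\cdot}^{\cdot}$'' factors as in the $E_1^{\circ i}$ expansion, matching the form appearing in the theorem.

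Applying one more Krein relation to combine the two expansions, and then using that $\mathbf{1}^{T}E_k\mathbf{1}=\delta_{k,0}\abs{X}$ (because $E_k E_0=\delta_{k,0}E_0$ and $E_0\mathbf{1}=\mathbf{1}$), only the $k=0$ term survives. A count of factors of $\abs{X}$ (an $\abs{X}^{-(i+j-1)}$ from the triple expansion, times $\abs{X}$ from $\mathbf{1}^{T}E_0\mathbf{1}$, and an $(\abs{X}/d)^{i+j}$ from the scaling) gives
\[
\sum_{x,y\in X}(x^*y)^i(y^*x)^j
=\frac{\abs{X}^2}{d^{i+j}}\sum_{l_0,\dots,l_i,h_0,\dots,h_j}q_{0,0}^{l_0}q_{1,l_0}^{l_1}\cdots q_{1,l_{i-1}}^{l_i}\,q_{0,0}^{h_0}q_{1,h_0}^{h_1}\cdots q_{1,h_{j-1}}^{h_j}\,q_{l_i,\widehat{h_j}}^{0}.
\]
Setting this equal to $\delta_{i,j}\abs{X}^2/\binom{d+i-1}{i}$ and rearranging yields the stated equivalence, since $d^{i+j}=d^{2i}$ when $i=j$.

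The main obstacles are bookkeeping. First, one must justify the reindexing of the expansion of $E_{\widehat{1}}^{\circ j}$ so that it uses only Krein parameters of the form $q_{1,\cdot}^{\cdot}$; this relies on the ``bar'' symmetry $q_{\widehat{a},\widehat{b}}^{\widehat{c}}=q_{a,b}^{c}$, which in turn uses that Krein parameters are real (or, equivalently, the involutive structure of the Bose--Mesner algebra). Second, one has to keep track of powers of $\abs{X}$ and $d$ so that the scalar on the right-hand side comes out to be exactly $d^{2i}/\binom{d+i-1}{i}$; the rest of the argument is a straightforward translation between the moment identity characterizing $(i,j)$-regularity and the Krein expansion.
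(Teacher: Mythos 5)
Your proposal is correct and follows essentially the same route as the paper: both reduce the claim to the moment criterion of Lemma~\ref{lem:moment} and evaluate $\sum_{x,y}(x^*y)^i(y^*x)^j$ by iterating the Krein relation on entrywise powers of $E_1$ and $E_{\widehat{1}}$, collapsing the result via $\mathbf{1}^TE_k\mathbf{1}=\delta_{k,0}\abs{X}$ (the paper phrases this entrywise through $Q_{n1}$ and the orthogonality $\sum_n k_nQ_{nl}\overline{Q_{nh}}=\abs{X}q_{l,\widehat{h}}^0$, which is the same fact). Your bookkeeping of the powers of $\abs{X}$ and $d$ checks out and reproduces the stated constant $d^{2i}/\binom{d+i-1}{i}$.
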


\begin{proof}
Let $(x,y)$ be in $R_n$ and set $i\geq 0$. By comparing the $(x,y)$-entry of both sides of
\[
(|X|E_1)^i=\sum_{l_0,l_1,\ldots,l_i=0}^sq_{0,0}^{l_0}q_{1,l_0}^{l_1}\cdots q_{1,l_{i-1}}^{l_i}|X|E_{l_i},
\]
we obtain
\[
(Q_{n1})^i=\sum_{l_0,l_1,\ldots,l_i=0}^sq_{1,0}^{l_0}q_{1,l_0}^{l_1}\cdots q_{1,l_{i-1}}^{l_i}Q_{nl_i}.
\]
Similarly, for $j \geq 0$,
\[
(\overline{Q_{n1}})^j=\sum_{h_0,h_1,\ldots,h_j=0}^sq_{0,0}^{h_0}q_{1,h_0}^{h_1}\cdots q_{1,h_{j-1}}^{h_j}\overline{Q_{nh_j}}.
\]
Combining these two equations,
\begin{align*}
\frac{1}{|X|^2}&\sum\limits_{x,y\in X}(x^*y)^i(\overline{x^*y})^j\\
 &=\frac{1}{|X|}\sum_{n=0}^s\left(\frac{Q_{n1}}{d}\right)^i\left(\frac{\overline{Q_{n1}}}{d}\right)^jk_n \\
&=\frac{1}{|X|d^{i+j}}\sum_{l_0,\ldots,l_i,h_0,\ldots,h_j=0}^sq_{0,0}^{l_0}q_{1,l_0}^{l_1}\cdots q_{1,l_{i-1}}^{l_i}q_{0,0}^{l_0}q_{1,h_0}^{l_1}\cdots q_{1,h_{j-1}}^{h_j} \left(\sum_{n=0}^s Q_{nl_i}\overline{Q_{nh_j}}k_n\right)\\
&=\frac{1}{d^{i+j}}\sum_{l_0,\ldots,l_i,h_0,\ldots,h_j=0}^sq_{0,0}^{l_0}q_{1,l_0}^{l_1}\cdots q_{1,l_{i-1}}^{l_i}q_{0,0}^{l_0}q_{1,h_0}^{l_1}\cdots q_{1,h_{j-1}}^{h_j}q_{l_i,\widehat{h_j}}^0.
\end{align*}
The result now follows from Lemma~\ref{lem:moment}.
\end{proof}

\begin{corollary}\label{cor:Tscheme}
Let $(X,\{R_i\}_{i=0}^s)$ be an association scheme and identify the points of $X$ with unit vectors whose Gram matrix is a scalar multiple of $E_1$. Then $X$ is a complex spherical $\mathcal{T}$-design such that:
\begin{enumerate}[(i)]
\item $\mathcal{T}$ contains $\cl\{(1,1)\}$;
\item $(2,0)\in\cT$ if and only if $\widehat{1}\neq 1$; 
\item $(2,1)\in \mathcal{T}$ if and only if $q_{1,1}^{1}=0$;
\item $(3,0)\in \mathcal{T}$ if and only if $q_{1,1}^{\widehat{1}}=0$.
\end{enumerate}
\end{corollary}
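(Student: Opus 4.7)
The plan is to invoke Theorem~\ref{designAS} and show that, for each of the four relevant pairs $(i,j)$, the multi-index Krein-parameter sum collapses to a single easily-checked condition. Three preliminary simplifications are in order, all coming from $E_0=\frac{1}{|X|}J$ and the definition of the Krein parameters: $q_{0,0}^{l}=\delta_{l,0}$, $q_{1,0}^{l}=\delta_{l,1}$, and $q_{l,0}^{0}=\delta_{l,0}$. One further identity is needed. Summing both sides of $E_i\circ E_j=\frac{1}{|X|}\sum_k q_{i,j}^k E_k$ over all entries, and using $E_k\one=\delta_{k,0}\one$, yields $q_{i,j}^0=\sum_{x,y}(E_i)_{xy}(E_j)_{xy}$; the right-hand side equals $\tr(E_iE_j^T)=\tr(E_iE_{\widehat{j}})=\delta_{i,\widehat{j}}\,m_i$, so
\[
q_{i,j}^0 \;=\; \delta_{i,\widehat{j}}\,m_i.
\]

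With these in hand, I would plug each pair into the sum appearing in Theorem~\ref{designAS}. For $(i,j)\in\{(0,0),(1,0),(0,1)\}$ the sum reduces to $q_{0,0}^0=1$, $q_{1,0}^0=0$, $q_{0,1}^0=0$ respectively, which match the required values trivially. For $(1,1)$ the three simplifications force $l_0=h_0=0$ and $l_1=h_1=1$, leaving $q_{1,\widehat{1}}^0=m_1=d=d^2/\binom{d}{1}$, so $(1,1)\in\cT$ always, proving~(i). For $(2,0)$ the sum collapses to $q_{1,1}^0=\delta_{1,\widehat{1}}\,d$, which vanishes precisely when $\widehat{1}\neq 1$, proving~(ii). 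For $(2,1)$ the residual sum is $\sum_{l_2} q_{1,1}^{l_2}\,q_{l_2,\widehat{1}}^0=d\cdot q_{1,1}^{1}$, vanishing iff $q_{1,1}^1=0$, proving~(iii). For $(3,0)$ the residual sum is $\sum_{l_2} q_{1,1}^{l_2}\,q_{1,l_2}^0=d\cdot q_{1,1}^{\widehat{1}}$, vanishing iff $q_{1,1}^{\widehat{1}}=0$, proving~(iv).

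Since the criterion in Theorem~\ref{designAS} is itself an equivalence, this case analysis delivers both directions of each part of the corollary at once. I do not anticipate any substantive obstacle beyond the index bookkeeping; the only things to watch are maintaining the convention $E_{\widehat{j}}=E_j^T$ consistently, and applying the identity $q_{i,j}^0=\delta_{i,\widehat{j}}m_i$ with the correct index pair at each stage. The main subtlety is that the outermost factor $q_{l_i,\widehat{h_j}}^0$ in Theorem~\ref{designAS} forces the paired indices $l_i$ and $h_j$ to be conjugate under $\widehat{\cdot}$, which is what produces the Krein conditions on $q_{1,1}^{1}$ versus $q_{1,1}^{\widehat{1}}$ in parts~(iii) and~(iv).
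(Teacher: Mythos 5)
Your proposal is correct and takes the same route as the paper: the paper's proof is literally the single sentence ``This follows immediately from Theorem~\ref{designAS},'' and your argument is the natural expansion of that sentence, with the reductions $q_{0,0}^{l}=\delta_{l,0}$, $q_{1,0}^{l}=\delta_{l,1}$, and $q_{i,j}^0=\delta_{i,\widehat{j}}m_i$ correctly collapsing each multi-index sum to the stated Krein condition. All four case computations check out.
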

\begin{proof}
This follows immediately from Theorem~\ref{designAS}. 
\end{proof}

In certain cases, we can completely characterize the association scheme associated with a complex spherical design or vice versa. Theorems \ref{thm:schemedesign1} and \ref{thm:schemedesign2} are two examples.

\begin{theorem}\label{thm:schemedesign1}
Let $\mathcal{U}=\{(i,j)\in\nn^2: i+j\leq1\}$, and let $X \sbs \Om(d)$ be a tight design with respect to $\cU$ with degree $2$. Then the inner product relations in $X$ define a $2$-class nonsymmetric association scheme with second eigenmatrix 
$$
Q=\begin{pmatrix}1&d&d\\1&\frac{-1+\sqrt{-1-2d}}{2}&\frac{-1-\sqrt{-1-2d}}{2}\\1&\frac{-1-\sqrt{-1-2d}}{2}&\frac{-1+\sqrt{-1-2d}}{2}  \end{pmatrix}.
$$
Conversely, if $(X,\{R_i\}_{i=0}^2)$ is a $2$-class nonsymmetric association scheme with second eigenmatrix $Q$ above, then the primitive idempotent $E_1$ is a scalar multiple of the Gram matrix of a tight design with respect to $\cU$ with degree $2$.
\end{theorem}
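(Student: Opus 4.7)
The plan is to handle the two implications separately, relying on Theorem~\ref{thm:equiv}, Theorem~\ref{thm:S1}, and Corollary~\ref{cor:Tscheme} as the main machinery, together with one moment computation.

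For the forward direction, I would first apply Theorem~\ref{thm:equiv} to rephrase the hypothesis: $X$ tight with respect to $\cU = \{(0,0),(1,0),(0,1)\}$ is equivalent to $X$ being a tight $\cU$-code, whose annihilator (up to positive scaling) is $F(x) = \sum_{(k,l)\in\cU} g_{k,l}(x) = 1 + dx + d\overline{x}$. So every $\alpha \in A(X)$ satisfies $\Re(\alpha) = -1/(2d)$. Because $A(X)$ is closed under complex conjugation and $|A(X)| = 2$, the two inner products must form a non-real conjugate pair $\alpha_1, \alpha_2 = \overline{\alpha_1}$ (two real roots would coincide, contradicting degree $2$). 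With $s = 2$ and $|\cU| = 3 = s+1$, Theorem~\ref{thm:S1}(ii) shows that $X$ carries a $2$-class association scheme, necessarily nonsymmetric since $\alpha_1 \neq \overline{\alpha_1}$; by equation~\eqref{eqn:secondeigenmatrix} its second eigenmatrix has row $i$ equal to $(1,\,d\alpha_i,\,d\overline{\alpha_i})$.

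To complete the explicit description of $Q$, I would pin down $|\alpha_1|$ via the $(1,1)$-moment identity. Equality in Lemma~\ref{lem:moment} for $k=l=1$ gives $\sum_{x,y\in X}|x^*y|^2 = |X|^2/d$, and separating the diagonal (using $|\alpha_1| = |\alpha_2|$ and $|X| = 2d+1$) gives
\[
|X| + |X|(|X|-1)|\alpha_1|^2 = |X|^2/d,
\]
so $|\alpha_1|^2 = (d+1)/(2d^2)$. Combined with $\Re(\alpha_1) = -1/(2d)$, this forces $d\alpha_1 = (-1+\sqrt{-1-2d})/2$, and substituting into the eigenmatrix above reproduces exactly the claimed $Q$.

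For the converse, starting from a $2$-class nonsymmetric scheme with the given $Q$ and identifying $X$ with unit vectors whose Gram matrix is proportional to $E_1$, the multiplicities $m_0=1$, $m_1=m_2=d$ yield $|X| = 2d+1$, while the normalization makes the inner product on $R_i$ equal $Q_{i1}/d$. Reading these off $Q$ produces $\alpha_1 = (-1+\cxi\sqrt{2d+1})/(2d)$ and $\alpha_2 = \overline{\alpha_1}$, which are distinct complex-conjugate values, so $X$ has degree exactly $2$. Nonsymmetry forces $\widehat{1} \neq 1$, so Corollary~\ref{cor:Tscheme}(i),(ii) together with the $(k,l)\leftrightarrow(l,k)$ symmetry of regularity shows that $X$ is $(1,1)$-, $(2,0)$-, and $(0,2)$-regular, hence a $\cU*\cU$-design. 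Finally $|X| = 2d+1 = \sum_{(k,l)\in\cU} m_{k,l}$ shows that this design is tight with respect to $\cU$. The only substantive calculation is the moment argument pinning down $|\alpha_1|$ in the forward direction; every other step is a direct invocation of a previously established theorem.
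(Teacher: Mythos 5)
Your proposal is correct, and its skeleton coincides with the paper's: both directions rest on Theorem~\ref{thm:S1}(ii) to produce the scheme, on the fact that every $\al\in A(X)$ is a root of $1+dx+d\co{x}$ (you via Theorem~\ref{thm:equiv}, the paper via Corollary~\ref{S12} --- same content) to fix $\Re(\al)=-1/2d$, and on equation~\eqref{eqn:secondeigenmatrix} for the shape of $Q$. The one place where you genuinely diverge is in pinning down $\Im(\al)$: the paper introduces the common valency $k$ of the two non-identity relations and solves the system $1+2k=|X|$, $1+2ak=0$, $1+2a^2k-2b^2k=0$ coming from the row sums of $g_{1,0}$ and $g_{2,0}$, obtaining $k=d$, $|X|=2d+1$ and $b$ simultaneously; you instead read $|X|=2d+1$ off the tightness condition and then use the equality case of the $(1,1)$-moment identity (Lemma~\ref{lem:moment}) to get $\abs{\al}^2=(d+1)/2d^2$. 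Your route is marginally cleaner in that it never needs the (true but unstated in the paper) fact that the two relations have equal valency, and it trades two local design equations for one global moment sum; the paper's route yields the valencies as a by-product. In the converse direction you supply via Corollary~\ref{cor:Tscheme}(i),(ii) and the $(k,l)\leftrightarrow(l,k)$ symmetry the $\cU*\cU$-regularity that the paper dismisses as ``easy to see,'' which is a welcome amount of extra detail rather than a different method.
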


\begin{proof}
Let $X$ be a tight design with respect to $\cU$ with degree $s=2$. By Theorem~\ref{thm:S1}, $X$ carries a scheme. 
Let $A(X)=\{a_1+\textrm{i}b_1,a_2+\textrm{i}b_2\}$, for some $a_1,b_1,a_2,b_2\in \mathbb{R}$. From Corollary~\ref{S12}, we get that $1 +2da_i = 0$, and so $a_1 = a_2 = -1/2d$. Since $A(X)$ is closed under complex conjugation, we see that $A(X)=\{a+\textrm{i}b,a-\textrm{i}b\}$ with $a =  -1/2d$ and $b>0$. Let $k$ be the valency of each of the two relations. From the fact that $X$ is a $\cU*\cU$-design we get the following equations (among others):
\begin{align*}
&1+2k =|X|;\\
&1+2ak=0;\\
&1+2a^2k-2b^2k=0.
\end{align*} 
These equations give the unique solution $k=d$, $b=\sqrt{1+2d}/2d$ and $|X| = 2d+1$.
By equation~\eqref{eqn:secondeigenmatrix}, the second eigenmatrix is 
$$Q=\begin{pmatrix}1&d&d\\1&\frac{-1+\sqrt{-1-2d}}{2}&\frac{-1-\sqrt{-1-2d}}{2}\\1&\frac{-1-\sqrt{-1-2d}}{2}&\frac{-1+\sqrt{-1-2d}}{2}  \end{pmatrix},$$
which implies that the scheme $X$ coincides with a $2$-class nonsymmetric association scheme.

Conversely, let $(X,\{R_i\}_{i=0}^2)$ be a $2$-class nonsymmetric association scheme having the second eigenmatrix as above. Then the primitive idempotent $E_1$ is positive semidefinite with rank $d$. Considering the rows of a $|X| \times d$ matrix $U$ such that $\tfrac{d}{|X|}E_1=UU^*$, we obtain a set $X$ with inner product set $A(X)=\{\frac{-1+\sqrt{-1-2d}}{2d},\frac{-1-\sqrt{-1-2d}}{2d}\}$.
Then it is easy to see that $X$ is a tight design with respect to $\cU$.
\end{proof}

\begin{theorem}\label{thm:schemedesign2}
Let $\mathcal{T}=\{(i,j)\in\nn^2: i+j\leq3\}$, and let $X = Y \cup (-Y)$ be a $2$-antipodal $\mathcal{T}$-design in $\Omega(d)$ with degree $3$. If $G$ is Gram matrix of $Y$, then $\mathrm{i}\sqrt{2d-1}(G-I)$ is a skew-symmetric conference matrix. Conversely, if $C$ is a $2d \times 2d$ skew symmetric conference matrix, then $I+\frac{\mathrm{i}}{\sqrt{2d-1}}C$ is the Gram matrix of a set of vectors $L \sbs \Omega(d)$ such that $L \cup (-L)$ is a $\mathcal{T}$-design with degree $3$.
\end{theorem}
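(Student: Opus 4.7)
The plan is to convert the data of a $2$-antipodal $\cT$-design $X=Y\cup(-Y)$ of degree $3$ into structural constraints on the Gram matrix $G$ of $Y$, and conversely to recover such a design from a skew-symmetric conference matrix. Since $X$ is $2$-antipodal, $-1$ belongs to $A(X)$, and $A(X)$ is closed under negation; combined with $|A(X)|=3$, this forces $A(X)=\{-1,\beta,-\beta\}$ for some $\beta\in\cx$. By Lemma~\ref{lem:avgantipodal}, the $\cT$-design conditions for $(k,l)$ with $k\not\equiv l \pmod 2$ are automatic on any $2$-antipodal set, so among the pairs $(k,l)\neq(0,0)$ with $k+l\leq 3$, only $(1,1)$ and $(2,0)$ (and its conjugate $(0,2)$) carry information.

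For the forward direction, I first pin down the parameters. Off-diagonal entries of $G$ take values in $\{\beta,-\beta\}$, so direct expansion of the $(2,0)$ identity from Lemma~\ref{lem:moment} gives $1+(|Y|-1)\beta^2=0$, whence $\beta^2=-1/(|Y|-1)$ and $\beta$ is pure imaginary. The $(1,1)$ identity then yields $|Y|=2d$, so $\beta=\pm\cxi/\sqrt{2d-1}$. Consequently $M:=\cxi\sqrt{2d-1}(G-I)$ is a real matrix with $\pm 1$ off-diagonal entries and zero diagonal; since $G$ is Hermitian, $M^*=-M$, and for a real matrix this reads as $M^T=-M$, so $M$ is skew-symmetric. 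To obtain $MM^T=(2d-1)I$, I invoke $(1,1)$-regularity: writing $Y$ also for the $d\times 2d$ matrix whose columns are the vectors in $Y$, we have $G=Y^*Y$ and $\sum_{x\in X}xx^*=2YY^*=(|X|/d)I=4I$, so $YY^*=2I$. Hence $G^2=Y^*(YY^*)Y=2G$, from which $(G-I)^2=I$ and $MM^T=-M^2=(2d-1)(G-I)^2=(2d-1)I$.

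For the converse, I start from a $2d\times 2d$ skew-symmetric conference matrix $C$ and set $G:=I+\frac{\cxi}{\sqrt{2d-1}}C$. The matrix $G$ is Hermitian with unit diagonal and off-diagonal entries $\pm\cxi/\sqrt{2d-1}$. Using $C^T=-C$ together with $CC^T=(2d-1)I$, one obtains $C^2=-(2d-1)I$ and hence $G^2=2G$. Thus $G/2$ is an orthogonal projection of rank $d$ (by trace), so $G$ admits a factorization $G=W^*W$ with $W$ a $d\times 2d$ matrix. Taking $L$ to be the set of columns of $W$ gives a family of unit vectors in $\Om(d)$ with Gram matrix $G$, and since $G_{ij}\notin\{\pm 1\}$ for $i\neq j$, the union $L\cup(-L)$ has $4d$ distinct elements and is $2$-antipodal. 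The $\cT$-design condition for $X:=L\cup(-L)$ is then checked via Lemma~\ref{lem:moment}: the cases $(1,0)$, $(2,1)$, $(3,0)$ (and their conjugates) vanish by antipodal cancellation as in Lemma~\ref{lem:avgantipodal}, while $(1,1)$ and $(2,0)$ reduce, after expanding over the three types of inner products in $A(X)$, to the identities $|\beta|^2=1/(2d-1)$ and $\beta^2=-1/(2d-1)$ respectively, both of which hold by construction.

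The conceptual core of the proof is the identification of the design data with the algebraic identity $(G-I)^2=I$ combined with the purely-imaginary off-diagonal condition; everything else is bookkeeping. The main place where care is required is the interaction between the $\pm\cxi/\sqrt{2d-1}$ off-diagonal entries and the factor $\cxi\sqrt{2d-1}$, which is precisely what makes $M$ a \emph{real} matrix; this is what allows the skew-Hermiticity $M^*=-M$ to be read as ordinary skew-symmetry $M^T=-M$ and produces a genuine conference matrix rather than a complex analogue.
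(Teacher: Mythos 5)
Your proof is correct, and it arrives at the same two key facts as the paper's proof --- the parameter determination $A(X)=\{-1,\pm\cxi/\sqrt{2d-1}\}$ with $|Y|=2d$, and the quadratic identity $G^2=2G$ --- but by a more elementary route. The paper first applies Theorem~\ref{thm:S1} with $\cU=\cl\{(1,0)\}$ to endow $X$ with a nonsymmetric association scheme, solves essentially the same moment equations to pin down $a=0$, $b=1/\sqrt{2d-1}$, $k=2d-1$ and the second eigenmatrix, and then extracts $A^2=2A$ from idempotency of $E_1$ (identifying $4E_1$ with the Gram matrix of $X$). You obtain $G^2=2G$ directly from $(1,1)$-regularity via $YY^*=2I$; since $E_1^2=E_1$ is itself a consequence of $H_{1,0}^*H_{1,0}=|X|I$, the underlying mechanism is identical, but your version dispenses with the scheme machinery and the eigenmatrix, neither of which the statement requires. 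Your converse is in fact more detailed than the paper's, which largely asserts the design property; reducing to the moments of Lemma~\ref{lem:moment}, killing the odd-total-degree ones by the $y\mapsto -y$ pairing, and checking $(2,0)$ and $(1,1)$ numerically is the right justification. Two small points of care: closure of $A(X)$ under negation fails exactly at $-1$ (since $-(-1)=1$ corresponds to the excluded diagonal pairs), though the conclusion $A(X)=\{-1,\beta,-\beta\}$ still follows because $A(X)\setminus\{-1\}$ is negation-closed and cannot be $\{0\}$; and the $(1,1)$ moment you need equals $|X|^2/d$, i.e.\ the binomial in Lemma~\ref{lem:moment} must be read as $\binom{d+k-1}{d-1}^{-1}$ (the exponent $k-1$ there is a typo), which is the normalization your computation implicitly uses.
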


\begin{proof}
Let $X$ be a $2$-antipodal $\mathcal{T}$-design in $\Omega(d)$ with degree $s=3$. 
If we take $\mathcal{U}=\{(i,j)\in\nn^2: i+j\leq1\}$, then $\mathcal{U}*\mathcal{U}\subset \mathcal{T}$ and $s=|\mathcal{U}|$.
Therefore $X$ carries an association scheme by Theorem~\ref{thm:S1}.

We calculate the second eigenmatrix. 
Let $A(X)=\{a_1+\textrm{i}b_1,a_2+\textrm{i}b_2,-1\}$ for some $a_1,b_1,a_2,b_2\in \mathbb{R}$.
Since $A(X)$ is closed under conjugation, there are two possibilities: 
\begin{enumerate}
\item $b_1\neq0$ and $a_1+\textrm{i}b_1=a_2-\textrm{i}b_2$,
\item $b_1=b_2=0$ and $a_1\neq a_2$.
\end{enumerate}
The second case does not occur, since $\sum_{x,y\in X}(x^*y)^2=\tfrac{2}{d(d+1)}\sum_{x,y\in X}g_{2,0}(x^*y)=0$. So $A(X)=\{a+\textrm{i}b,a-\textrm{i}b,-1\}$ with $b>0$.
Letting $k$ be the valency of the relations defined by $a+\textrm{i}b$ and $a-\textrm{i}b$, we get the following equations:
\begin{align*}
&2+2k =|X|;\\
&2ak=0;\\
&2+2a^2k-2b^2k=0;\\
&2d-2-2k+2(a^2+b^2)dk=0.
\end{align*} 
The unique solution to these equations is $a=0$, $b=1/\sqrt{2d-1}$ and $k=2d-1$, and thus the second eigenmatrix of the scheme is 
$$Q=\begin{pmatrix}1&d&d&2d-1\\1&\frac{\textrm{i}d}{\sqrt{2d-1}}&\frac{-\textrm{i}d}{\sqrt{2d-1}}&-1\\1&\frac{-\textrm{i}d}{\sqrt{2d-1}}&\frac{\textrm{i}d}{\sqrt{2d-1}}&-1 \\1&-d&-d&2d-1  \end{pmatrix}.$$
Let $A$ be the principal submatrix of $4E_1$ restricted to $Y$, where $X=Y\cup(-Y)$.
Then $4E_1=\begin{pmatrix}A&-A\\-A&A \end{pmatrix}$, 
and $E_1^2=E_1$ implies that $A^2=2A$.
Define a matrix $C$ such that  $A=I+\frac{\textrm{i}}{\sqrt{2d-1}}C$. Then $C$ is a $\pm 1$ matrix satisfying  $C^T=-C$ and $CC^T=(2d-1)I$, so $C$ is a skew symmetric conference matrix.

Conversely, let $C$ be a $2d\times 2d$ skew symmetric conference matrix. Then the complex matrix $I+\frac{\mathrm{i}}{\sqrt{2d-1}}C$ is Hermitian positive semi-definite of rank $d$.
Regard this matrix as the Gram matrix of unit vectors $L=\{x_1,\ldots,x_{2d}\}$ in $\Omega(d)$, and define $X=L\cup (-L)$. Then $A(X)=\{\pm\frac{\textrm{i}}{\sqrt{2d-1}},-1\}$, so $X$ has  degree $s=3$.
Set $$(\alpha_i)_{0\leq  i\leq3}=(1,\tfrac{\textrm{i}}{\sqrt{2d-1}},-\tfrac{\textrm{i}}{\sqrt{2d-1}},-1).$$
Then $X$ is inner product invariant with valencies $(k_i)_{0\leq i\leq 3}=(1,2d-1,2d-1,1)$, and
 $X$ is a $\mathcal{T}$-design in $\Omega(d)$ where $\mathcal{T}=\{(k,l)\in\nn^2: k+l\leq 3\}$.
\end{proof}

\section{Derived codes and designs}\label{sec:derived}
In this section we develop the notion of a derived complex spherical code, similar to that of a derived real spherical code \cite{dgs}.  Fix a point $z$ in a code $X \sbs \Om(d)$ and an angle $\al \in A(X)$ such that $\abs{\al} < 1$, and consider the points $R_\alpha(z):=\{y\in X \mid \langle z,y\rangle =\alpha\}$. The \defn{derived code} $X_{\alpha}(z)$ is the orthogonal projection of $R_\alpha(z)$ onto $z^\perp=\{y\in \mathbb{C}^d \mid \langle z,y\rangle =0 \}$, with points rescaled to lie in $\Omega(d-1)$.

After a unitary transformation, we may assume $z=(1,0,\ldots,0)$.
Then 
$$X_\alpha(z)=\{(x_2,\ldots,x_d)\in\Omega(d-1):  (\alpha,x_2\sqrt{1-|\alpha|^2},\ldots,x_d\sqrt{1-|\alpha|^2})\in X\},$$
and its inner product set $A(X_\alpha(z))$ is contained in
$$\Big\{\frac{\beta-|\alpha|^2}{1-|\alpha|^2}: \beta\in A(X)\Big\}.$$
If $X$ is an $\cS$-code in $\Om(d)$ with an annihilator polynomial $F(x)$,
then $X_{\al}(z)$ is an $\cS$-code in $\Om(d-1)$ with an annihilator polynomial $G(x):=F((1-\abs{\al}^2) x+\abs{\al}^2)$.

Define $A^*(X)=\{ \al \in A(X): \abs{\al}<1\}$ and denote the cardinality of $A^*(X)$ by $s^*$.

\begin{theorem}\label{derived}
Let $X$ be a $\mathcal{T}$-design in $\Omega(d)$ with degree $s$.
Let $\cS^*$ be a lower set such that $|\cS^*|=s^*$ and 
$\mathcal{T}'$ a lower set such that $\cT'*\cS^*\sbs \cT$. 
Assume the matrix $G=(\al^m\co{\al}^n)_{\substack{\al\in A^*(X)\\ (m,n)\in\cS^*}}$ is nonsingular.
Then for any $z\in X$ and any $\alpha\in A^*(X)$, $X_{\alpha}(z)$ is a $\mathcal{T}'$-design in $\Omega(d-1)$.
\end{theorem}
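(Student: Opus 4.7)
The plan is to verify the design condition for $X_\alpha(z)$ by rewriting a sum over $X_\alpha(z)$ as a sum over $X$ and then invoking the $\cT$-design hypothesis. By unitary invariance I may assume $z = e_1$, so that $R_\alpha(z) = \{y \in X : y_1 = \alpha\}$ and $\tilde{y} = (y_2,\ldots,y_d)/\sqrt{1-\abs{\alpha}^2}$ for $y \in R_\alpha(z)$. By Lemma~\ref{lem:designchar}(iii) together with the addition theorem (Theorem~\ref{thm:addition}), it suffices to prove that for each $(k,l)\in\cT'$ with $(k,l)\neq(0,0)$ and each $e \in \Harm(k,l)$ on $\Om(d-1)$, $\sum_{\tilde y\in X_\alpha(z)} e(\tilde y) = 0$. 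Homogeneity of $e$ reduces this to $\sum_{y\in R_\alpha(z)} \tilde e(y) = 0$, where $\tilde e(y) := e(y_2,\ldots,y_d)$ is regarded as a polynomial on $\cx^d$. The extension $\tilde e$ lies in $\Harm(k,l)$ on $\Om(d)$ (independence from $y_1$ preserves harmonicity), and critically $\tilde e$ vanishes on every scalar multiple of $z$ because it is homogeneous of positive total degree.

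Using nonsingularity of $G$, I would construct $F_\alpha(x) = \sum_{(m,n)\in\cS^*} c_{m,n}\, x^m \co{x}^n$ with $F_\alpha(\beta) = \delta_{\alpha,\beta}$ for every $\beta\in A^*(X)$. Then $\overline{F_\alpha(z^*y)}\,\tilde e(y)$ is a polynomial indicator that picks out precisely the contribution of $R_\alpha(z)$ within $X$: for $y\in X$ with $z^*y \in A^*(X)$ the scalar $\overline{F_\alpha(z^*y)}$ equals $\delta_{\alpha,z^*y}$; for the remaining $y\in X$ (those unit multiples of $z$, where $\overline{F_\alpha(z^*y)}$ may be nonzero) the factor $\tilde e(y)$ vanishes. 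Hence
\[
\sum_{y\in R_\alpha(z)} \tilde e(y) \;=\; \sum_{y\in X} \overline{F_\alpha(z^*y)}\,\tilde e(y).
\]

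As a polynomial in $y$, $\overline{F_\alpha(z^*y)}\,\tilde e(y)$ lies in $\bigoplus_{(m,n)\in\cS^*} \Hom(k+n,\, l+m)$, and the hypothesis $\cT'*\cS^* \sbs \cT$ places each of these bi-degrees in $\cT$. The $\cT$-design property of $X$ therefore replaces the right-hand side by $\abs{X}\int_{\Om(d)} \overline{F_\alpha(z^*y)}\,\tilde e(y)\dd y$. Parametrising $y = (y_1,\sqrt{1-\abs{y_1}^2}\,u)$ with $u\in\Om(d-1)$ and using homogeneity of $\tilde e$, this integral factors as a scalar multiple of $\int_{\Om(d-1)} e(u)\dd u$, which vanishes because $e$ is a nonconstant harmonic on $\Om(d-1)$. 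This yields $\sum_{y\in R_\alpha(z)} \tilde e(y) = 0$, and so $X_\alpha(z)$ is a $\cT'$-design.

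The main bookkeeping subtlety is the index swap built into the definition $\cU*\cV = \{(k+l',k'+l)\}$: it is precisely this swap that forces us to use the complex conjugate $\overline{F_\alpha(z^*y)}$ rather than $F_\alpha(z^*y)$ itself, since only then do the bi-degrees $(k+n,l+m)$ of the product fit into $\cT'*\cS^* \sbs \cT$. The other key observation---that $\tilde e$ automatically annihilates the contributions from points $y\in X$ with $\abs{z^*y}=1$---is what lets us avoid multiplying $F_\alpha$ by an extra factor of $1-x\co{x}$ (which would enlarge $\cS^*$ and destroy the convolution assumption).
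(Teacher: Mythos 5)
Your proof is correct, and its skeleton is the same as the paper's: both exploit the fact that $x_1^m\overline{x_1}^{\,n}$ times a bidegree-$(k,l)$ polynomial in $x_2,\dots,x_d$ has bidegree lying in $\cT'*\cS^*\sbs\cT$, so that sums of such products over $X$ are controlled by the design hypothesis, and both use the nonsingularity of $G$ to separate the fibres $R_\al(z)$. The differences are in execution. The paper keeps all $\al\in A^*(X)$ together, views $\bigl(\sum_{x\in UX_\al(z)}F_{k,l}(x)\bigr)_{\al}$ as the unknowns of a linear system indexed by $(m,n)\in\cS^*$, and deduces from nonsingularity only that these sums are independent of $U$ in the stabilizer of $z$; identifying the common value with the spherical average then requires an (implicit) averaging over that stabilizer. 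You instead invert $G$ explicitly by Lagrange interpolation to isolate one $\al$ at a time, reduce at the outset to nonconstant harmonics $e$ so that the target value is $0$, and verify directly that $\int_{\Om(d)}\overline{F_\al(z^*y)}\,\tilde e(y)\dd y=0$ by fibering $\Om(d)$ over the first coordinate. This buys a self-contained computation with no group-averaging step, and it makes explicit two points the paper leaves implicit: the conjugate on $F_\al$ is exactly what makes the bidegrees $(k+n,l+m)$ land in $\cT'*\cS^*$ given the index swap built into the definition of $*$, and the vanishing of $\tilde e$ at unit multiples of $z$ is what excuses $F_\al$ from having to annihilate the angles of modulus $1$.
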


\begin{proof}
Without loss of generality assume $z=e_1\in X$.
Fix $(k,l)\in\mathcal{T}'$.
For any $F_{k,l}\in\text{Hom}_{d-1}(k,l)$ and any $(m,n)\in S^*$,
define $G_{k,l}^{m,n}\in\text{Hom}_d(k+m,l+n)$ as
$$G_{k,l}^{m,n}(x_1,\ldots,x_d)=x_1^m\bar{x_1}^nF_{k,l}(x_2,\ldots,x_d).$$ 
Then 
\begin{align*}
\sum_{x\in X}G_{k,l}^{m,n}(x)-\sum_{x\in X:|\langle x,z\rangle|=1}G_{k,l}^{m,n}(x)
&=\sum_{\alpha\in A^*(X)}\sum_{x\in R_\alpha(z)}\alpha^m\bar{\alpha}^nF_{k,l}(x) \\
&=\sum_{\alpha\in A^*(X)}\alpha^m\bar{\alpha}^n(1-|\alpha|^2)^{\frac{k+l}{2}}\sum_{x\in X_\alpha(z)}F_{k,l}(x)
\end{align*}
For any element $U$ in the unitary group $U(d)$ such that $Uz=z$, we consider the action of $U$.
Since $X$ is a $\mathcal{T}$-design, the LHS is invariant under the action of $U$.
Therefore the RHS 
$$\sum_{\alpha\in A^*(X)}\alpha^m\bar{\alpha}^n(1-|\alpha|^2)^{\frac{k+l}{2}}\sum_{x\in UX_\alpha(z)}F_{k,l}(x)$$
is independent of $U\in U(d)$.
The $s^*$ elements $(k,l)\in S^*$ yields a linear equation whose unknowns are $\sum_{x\in UX_\alpha(z)}F_{k,l}(x)$ for $\alpha\in A^*(X)$.
Its coefficient matrix 
$$\text{Diag}((1-|\alpha|^2)^{\frac{k+l}{2}})_{\alpha \in A^*(X)}\cdot G$$
is nonsingular. 
Thus $\sum_{x\in UX_\alpha(z)}F_{k,l}(x)$ does not depend on $U\in U(d)$, which implies $X_\alpha(z)$ is a $\mathcal{T}'$-design in $\Omega(d-1)$. 
\end{proof}

\begin{example}({\bf Derived codes of SIC-POVMs in $\cx^2, \cx^8$})
For $X$ in Example~\ref{ex:sicpovm} and any $\al\in A^*(X)$ and any $z\in X$, we consider the derived code $Y=X_{\alpha}(z)$ in $\Omega(d-1)$.
Then $|Y|=d^2-1$ and $Y$ is a $\cT=\cl(\{(3,3)\})$-design with degree $s=4$.
Taking $\cU=\cl(1,1)\in \cT$ so that $\cU*\cU\sbs \cT$ and $|\cU|=4$,
then by Theorem~\ref{thm:S1}, $X$ carries a nonsymmetric association scheme.
\end{example}

\begin{example}\label{80}
For $X$ in Example~\ref{756} and any $7\leq i\leq 12$ and any $z\in X$, we consider the derived code $Y=X_{\alpha_{i}}(z)$ in $\Omega(5)$.
Then $|Y| = 80$, $A(Y)=\{\pm\frac{1}{3},\pm\frac{\mathrm{i}}{\sqrt{3}},-1\}$ with degree $s=5$, and $Y$ is $\mathcal{T}$-design where $\cT=\cl(\{(3,2),(2,3)\})$.
Taking $\cU = \cl(\{(1,1),(0,2)\})$
so that $\cU*\cU\sbs \cT$ and $|\mathcal{U}|=s$,
then by Theorem~\ref{thm:S1}, $X$ carries a nonsymmetric association scheme.
\end{example}

\begin{example}\label{270}
For $X$ in Example~\ref{756} and any $z\in X$, we consider the derived code $Y=X_{\alpha_6}(z)$ in $\Omega(5)$.
Then $|Y| = 270$, $A(Y)=\{0,w_6^j,\frac{1}{2}w_6^j: 0\leq j\leq5\}\setminus\{1\}$ with degree $s=12$, and $X$ is $\mathcal{T}$-design where $\cT=\cl(\{(5,2),(2,5)\})$.
Set
\[
(\al_i)_{i=0}^{12} = (\tfrac{1}{2},\tfrac{1}{2}\om,\ldots,\tfrac{1}{2}\om^5,0,1,\om,\ldots,\om^5).
\]
We take 
$\cU=\cl(\{(3,0),(1,1),(0,2)\})$,
so the determinant of 
$G=(g_{k,l}(\alpha_i))_{\substack{1\leq i\leq 7\\ (k,l)\in\mathcal{U}}}$ is $-27/256$ and $X$ carries a nonsymmetric association scheme by Theorem~\ref{thm:S8}.
\end{example}

\begin{example}
For any $z\in X$ and $\al\in A^*(X)$ in Example~\ref{thm:schemedesign1}, 
consider the derived codes $X_\al(z)$.
Take $\cS=\{(0,0),(1,0)\}$, 
then the determinant of the matrix $G=(\al^m\co{\al}^n)_{(m,n)\in\cS,\al\in A^*(X)}$ is $-2\mathrm{i}/\sqrt{2d-1}$.
Theorem~\ref{derived} shows that $X_\al(z)$ is $\cT'$-design, where $\cT'=\{(k,l)\in\nn^2:k+l\leq 2\}$.
The derived design $X_\al(z)$ coincides with the design appearing in Example~\ref{thm:schemedesign2}.      
As is shown, both designs carry schemes.
The scheme obtained from $X_\al(z)$ is a subconstituent of the scheme obtained from $X$. 
\end{example}

\begin{example}({\bf Derived designs of complex MUBs in $\cx^{2^{m}}$, $m$ even})
For any $z\in X$ and $\al\in A^*(X)$ with $|\al|=1/\sqrt{d}$ in Example~\ref{complexMUBodd}, 
consider the derived codes $X_\al(z)$.
Take $\cS=\cl(\{(1,1),(2,0)\})$, 
then the determinant of the matrix $G=(\al^m\co{\al}^n)_{(m,n)\in\cS,\al\in A^*(X)}$ is $-16/d^3$.
Theorem~\ref{derived} shows that $X_\al(z)$ is $\cT'$-design, where $\cT'=\{(k,l)\in\nn^2:k+l\leq 3\}$.
\end{example}

\begin{example}({\bf Derived designs of complex MUBs in $\cx^{2^{m}}$, $m$ odd})
For any $z\in X$ and $\al\in A^*(X)$ with $|\al|=1/\sqrt{d}$ in Example~\ref{complexMUBeven}, 
consider the derived codes $X_\al(z)$.
Take $\cS=\cl(\{(1,1),(2,0)\})$, 
then the determinant of the matrix $G=(\al^m\co{\al}^n)_{(m,n)\in\cS,\al\in A^*(X)}$ is $16\mathrm{i}/d^3$.
Theorem~\ref{derived} shows that $X_\al(z)$ is $\cT_1$-design, where $\cT_1=\cl(\{(3,2)\})$.
Similarly taking $\cS=\cl(\{(1,1),(2,0)\})$ yields that $X_\al(z)$ is $\cT_2$-design, where $\cT_2=\cl(\{(2,3)\})$.
Thus $X_\al(z)$ is $\cT$-design where $\cT_2=\cl(\{(3,2),(2,3)\})$.
\end{example}

\section{Designs from subgroups of the unitary group}\label{sec:designfromorbit}

Let $G$ be a finite subgroup of the unitary group $U(d)$ acting on $\Om(d)$. In this section, we consider the conditions under which an orbit of $G$, say $Gx := \{gx : g \in G\}$ for some $x \in \Om(d)$, is a spherical design. 

Recall that $\Hom(k,l)$ is a representation of $U(d)$ with the following action: for $g \in U(d)$ and $f \in \Hom(k,l)$, 
\[
(gf)(z) = f(g^{-1}z).
\]
Given a vector space $V$ which is a representation of $G$, the \defn{stabilizer} of $G$ is
\[
V^G := \{f \in V: gf = g \text{ for all } g \in G\}.
\]
Clearly $V^G$ is a subspace of $V$. The dimension of $V^G$ is the number of times the trivial representation of $G$ appears in $V$. For more background on representation theory, see \cite{fh}.

\begin{theorem}\label{thm:harmdesign}
Let $\cU$ be a lower set in $\nn^2$ and $G$ a finite subgroup of $U(d)$. 
The following are equivalent:
\begin{enumerate}[(i)]
\item For all $x\in \Om(d)$, $Gx$ is a complex spherical $\cU$-design.
\item $\Harm(k,l)^G=\{0\}$ for all $(k,l) \neq (0,0)$ in $\cU$.
\end{enumerate}
\end{theorem}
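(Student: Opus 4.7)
The plan is to recast the design condition for $Gx$ as the vanishing of a Reynolds averaging operator on each harmonic space $\Harm(k,l)$, and then observe that this vanishing is equivalent to triviality of the space of $G$-invariants.

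First I would reduce the design condition to a statement about harmonic polynomials. By the argument in the proof of Lemma~\ref{lem:designchar} (using that $\Harm(k,l)$ is orthogonal to the constants $\Harm(0,0)$), a finite set $X \sbs \Om(d)$ is a $\cU$-design if and only if $\sum_{y \in X} p(y) = 0$ for every $p \in \Harm(k,l)$ with $(k,l) \neq (0,0)$ in $\cU$. Applying this to $X = Gx$ and using that summing over $G$ overcounts the orbit by $\abs{\mathrm{Stab}_G(x)}$, the design condition becomes
\[
\sum_{g \in G} p(gx) = 0 \quad \text{for all } p \in \Harm(k,l),\ (k,l) \neq (0,0) \text{ in } \cU.
\]

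Second I would introduce the Reynolds (averaging) operator. Since $(gp)(z) = p(g^{-1}z)$, we have $p(gx) = (g^{-1}p)(x)$. Define
\[
\pi_G := \frac{1}{\abs{G}} \sum_{g \in G} g : \Harm(k,l) \to \Harm(k,l).
\]
Since $\Harm(k,l)$ is a $U(d)$-representation and hence a $G$-representation, a standard fact of representation theory is that $\pi_G$ is a projection onto $\Harm(k,l)^G$: it fixes every invariant, and its image is precisely the invariant subspace. Moreover
\[
\sum_{g \in G} p(gx) = \abs{G}\,(\pi_G p)(x).
\]
So the design condition above is equivalent to $(\pi_G p)(x) = 0$ for all $x \in \Om(d)$ and all such $p$.

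Third I would conclude in both directions. For (ii) $\Rightarrow$ (i): if $\Harm(k,l)^G = \{0\}$ for every $(k,l) \neq (0,0)$ in $\cU$, then $\pi_G p = 0$ for every $p \in \Harm(k,l)$, and hence $Gx$ is a $\cU$-design for every $x$. For (i) $\Rightarrow$ (ii): suppose $Gx$ is a $\cU$-design for every $x \in \Om(d)$. Then $\pi_G p$ vanishes identically on $\Om(d)$ for every $p \in \Harm(k,l)$ with $(k,l) \neq (0,0)$ in $\cU$. But $\pi_G p$ lies in $\Harm(k,l) \sbs L^2(\Om(d))$ and the harmonic spaces embed faithfully as functions on $\Om(d)$ (since the decomposition $\Hom(k,l) = \bigoplus_i \Harm(k-i,l-i)$ is orthogonal in the inner product coming from integration over $\Om(d)$, a nonzero element of $\Harm(k,l)$ cannot vanish on the sphere). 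Hence $\pi_G p = 0$ for all $p$, so $\Harm(k,l)^G = \pi_G(\Harm(k,l)) = \{0\}$.

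The only potentially tricky point is the last step, verifying that a harmonic polynomial vanishing on $\Om(d)$ must be zero; this is really just the statement that the restriction map from $\Harm(k,l)$ to $L^2(\Om(d))$ is injective, which is immediate from the orthogonality of distinct harmonic spaces together with $1 \in \Harm(0,0)$.
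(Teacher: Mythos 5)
Your proof is correct and follows essentially the same route as the paper's: both directions come down to identifying the orbit average $\frac{1}{|Gx|}\sum_{y\in Gx}f(y)$ with $(\pi_G f)(x)$, where $\pi_G=\frac{1}{|G|}\sum_{g\in G}g$ projects $\Harm(k,l)$ onto $\Harm(k,l)^G$. The paper phrases (i)$\Rightarrow$(ii) by taking $f\in\Harm(k,l)^G$ and noting $f(x)$ equals its own orbit average, hence vanishes for every $x$, which is the same argument you give via the image of $\pi_G$; your extra remark on injectivity of restriction to $\Om(d)$ is a fine point the paper treats as immediate since these are function spaces on the sphere.
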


\begin{proof}
(i)$\Rightarrow$(ii):
Assume (i).
For $(k,l) \neq (0,0)$ in $\cU$, $f\in \Harm(k,l)^G$ and $x\in \Om(d)$,
\begin{align*}
f(x)=\frac{1}{|G|}\sum_{g\in G}(gf)(x)=\frac{1}{|G|}\sum_{g\in G}f(g^{-1}x)=\frac{1}{|Gx|}\sum_{y\in Gx}f(y)=0.
\end{align*}
Hence $f=0$, which implies (ii). 

(ii)$\Rightarrow$(i):
Let $f$ be a polynomial in $\Harm(k,l)$. Then
\[
\frac{1}{|Gx|}\sum_{y\in Gx}f(y)=\frac{1}{|G|}\sum_{g\in G}f(g^{-1}x)= \frac{1}{|G|}\sum_{g\in G}(gf)(x)= 0,
\]
since $\frac{1}{|G|}\sum_{g\in G}(gf)$ is in $\Harm(k,l)^G$. Hence $Gx$ is a complex spherical $\cU$-design.
\end{proof}

If, for each $(k,l) \in \cU$, the representation of $G$ on $\Harm(k,l)$ is irreducible, then $\Harm(k,l)^G = \{0\}$ and so $Gx$ is a $\cU$-design. But, in fact more is true.

\begin{theorem}\label{thm:irrdesign}
Let $G$ be a finite subgroup of $U(d)$, let $\cU$ be a lower set in $\nn^2$.
Assume for each $(k,l)\in \cU$, the representation of $G$ on $\Harm(k,l)$ is irreducible.
Then for each vector $x\in\Om(d)$, the orbit $Gx$ is a complex spherical $\cU*\cU$-design. 
\end{theorem}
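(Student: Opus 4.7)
The plan is to invoke Theorem~\ref{thm:harmdesign} with $\cU*\cU$ in the role of $\cU$, after verifying that $\cU*\cU$ is again a lower set when $\cU$ is. This reduces the theorem to showing that $\Harm(m,n)^G = \{0\}$ for every $(m,n) \neq (0,0)$ in $\cU*\cU$.

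For each such $(m,n)$, I would fix a decomposition $(m,n) = (k_1+l_2,\, k_2+l_1)$ with $(k_1,l_1), (k_2,l_2) \in \cU$, and introduce the $G$-equivariant linear map
\[
\psi : \Harm(k_1,l_1) \otimes \overline{\Harm(k_2,l_2)} \to \Harm(m,n), \qquad p \otimes \overline{q} \;\mapsto\; \Pi(p\,\overline{q}),
\]
where $\Pi$ is orthogonal projection from $\Hom(m,n)$ onto its top harmonic summand $\Harm(m,n)$ in the decomposition $\Hom(m,n) = \bigoplus_i \Harm(m-i,n-i)$. The central computation is surjectivity of $\psi$: using the Jacobi product formula from Section~\ref{sec:polynomials} together with the pointwise identity $\overline{g_{k_2,l_2,a}} = g_{l_2,k_2,a}$ on $\Om(d)$, one obtains $\psi(g_{k_1,l_1,a} \otimes \overline{g_{k_2,l_2,a}}) = q_0\, g_{m,n,a}$ with $q_0 > 0$. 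Since the zonal harmonics $\{g_{m,n,a} : a \in \Om(d)\}$ span $\Harm(m,n)$, this proves surjectivity.

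Because $G$ is finite, the $G$-equivariant surjection $\psi$ restricts to a surjection on $G$-invariants $V^G \twoheadrightarrow \Harm(m,n)^G$, where $V := \Harm(k_1,l_1) \otimes \overline{\Harm(k_2,l_2)}$. By Schur's lemma applied to the irreducible $G$-representations $\Harm(k_1,l_1)$ and $\Harm(k_2,l_2)$,
\[
\dim V^G \;=\; \dim \Hom_G\bigl(\Harm(k_2,l_2),\, \Harm(k_1,l_1)\bigr) \;\le\; 1,
\]
vanishing precisely when the two harmonic spaces are non-isomorphic as $G$-representations. In the non-isomorphic case we conclude $\Harm(m,n)^G = \{0\}$ immediately.

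The main obstacle will be the isomorphic case, where one must show that the (unique up to scalar) invariant in $V$ still maps under $\psi$ to $0$ in $\Harm(m,n)$. In the self-isomorphic subcase $(k_1,l_1) = (k_2,l_2) =: (k,l)$, Koornwinder's addition theorem (Theorem~\ref{thm:addition}) identifies the image of the canonical invariant $\sum_i e_i \otimes \overline{e_i}$ with the constant $g_{k,l}(1)$ on $\Om(d)$, which $\Pi$ annihilates because constants belong to $\Harm(0,0)$ and are orthogonal to $\Harm(m,n)$ for $(m,n) \neq (0,0)$. The genuinely cross-isomorphic subcase $(k_1,l_1) \neq (k_2,l_2)$ calls for a parallel argument using the $G$-intertwiner between the two harmonic spaces together with the $L^2(\Om(d))$-orthogonality of distinct harmonic summands.
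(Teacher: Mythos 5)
Your strategy---reduce via Theorem~\ref{thm:harmdesign} to showing $\Harm(m,n)^G=\{0\}$ for all $(m,n)\neq(0,0)$ in $\cU*\cU$, then realize $\Harm(m,n)$ as a $G$-equivariant quotient of $\Harm(k_1,l_1)\otimes\overline{\Harm(k_2,l_2)}$---is genuinely different from the paper's, which instead compares the two $G$-invariant inner products $\ip{\cdot}{\cdot}$ and $\ip{\cdot}{\cdot}_X$ on each $\Harm(k,l)$ and appeals to the uniqueness of the invariant inner product on an irreducible. Your surjectivity computation (leading coefficient $q_0>0$ in the Jacobi product formula, zonal polynomials spanning $\Harm(m,n)$), the passage to invariants for finite $G$, the Schur bound $\dim V^G\leq 1$, and the self-isomorphic subcase (the canonical invariant maps to the constant $g_{k,l}(1)$, which $\Pi$ kills) are all correct.

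The gap is your final sentence: the cross-isomorphic subcase is not a routine detail awaiting a ``parallel argument''---it is the point where the statement itself fails, so no such argument exists under the stated hypotheses. Concretely, take $d=2$, let $G$ be the dihedral group of order $8$ realized as real orthogonal (hence unitary) matrices, and let $\cU=\{(0,0),(1,0),(0,1)\}$. Each $\Harm(k,l)$ with $(k,l)\in\cU$ is irreducible, but $\Harm(1,0)$ and $\Harm(0,1)$ are isomorphic as $G$-modules because $G$ is real. The pair $(2,0)\in\cU*\cU$ admits only the cross decomposition $(k_1,l_1)=(1,0)$, $(k_2,l_2)=(0,1)$, and the image under $\psi$ of the unique invariant of $V$ is exactly $z_1^2+z_2^2$, a nonzero element of $\Harm(2,0)^G$. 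Correspondingly, the orbit of $e_1$ is $\{\pm e_1,\pm e_2\}$, on which $z_1^2+z_2^2$ averages to $1$ rather than to $\int_{\Om(2)}(z_1^2+z_2^2)\dd z=0$, so the orbit is not $(2,0)$-regular and hence not a $\cU*\cU$-design. The theorem therefore needs the additional hypothesis that the spaces $\Harm(k,l)$, $(k,l)\in\cU$, are pairwise non-isomorphic as $G$-representations; the paper's own proof uses this silently when it asserts that distinct $\Harm(k,l)$ are ``distinct irreducible representations of both $G$ and $U(d)$'' (distinctness as $U(d)$-modules does not imply distinctness as $G$-modules). With that hypothesis added, your cross-isomorphic case never arises and your proof closes up completely; without it, neither your argument nor the paper's goes through.
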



\begin{proof}
Recall that for functions $f$ and $h$ on $\Om(d)$, we have an inner product $\ip{f}{h} =  \int_{\Om(d)} \overline{f(z)}h(z) \dd z$, and given a subset $X = Gx$ we may define a second inner product $\ip{f}{h}_X =  \tfrac{1}{|X|} \sum_{z \in X} \overline{f(z)}h(z)$. To show that $X$ is a $\cU*\cU$-design, we must show that $\ip{1}{f} = \ip{1}{f}_X$ for all $f \in \bigoplus_{(k,l) \in \cU*\cU} \Hom(k,l)$. Since $\oplus_{(k,l)\in\cU*\cU}\Hom(k,l)$ is generated by the products $\Hom(k,l)\Hom(l',k')$ as $(k,l)$ and $(k',l')$ run through $\cU$, it follows that $X$ is a $\cU*\cU$-design if and only if 
\[
\ip{f}{h} = \ip{f}{h}_X
\]
for all $f,h \in \bigoplus_{(k,l) \in \cU} \Hom(k,l)$.

The first inner product is $U(d)$-invariant (and therefore $G$-invariant), while the second is only $G$-invariant. If $(k,l) \neq (k',l')$, then $\Harm(k,l)$ and $\Harm(k',l')$ are distinct irreducible representations of both $G$ and $U(d)$. Therefore, for $f \in \Harm(k,l)$ and $h \in \Harm(k',l')$ we have $\ip{f}{h} = 0 = \ip{f}{h}_X$. So, it suffices to consider $f$ and $h$ from the same $\Harm(k,l)$. In this case, since $\Harm(k,l)$ is an irreducible representation of $G$, we know that by Schur's Lemma there is a unique (up to a scalar multiple) $G$-invariant inner product on $\Harm(k,l)$. (See for example Sepanski Cor. 2.20.) Therefore $\ip{f}{h} = c\ip{f}{h}_X$ for some constant $c$. Since $\ip{f}{h}$ is $U(d)$-invariant, the constant is independent of the choice of $x$, and normalization implies that $c = 1$. Thus $\ip{f}{h} = \ip{f}{h}_X$. 
\end{proof}

\begin{example}
Let $G$ be the Pauli group of $U(d)$, that is, $G$ is generated by 
$P_x=(\de_{i+1,j})_{1\leq i,j\leq d}$ (indices run through $\{k\mod d:1\leq k\leq d\}$) and 
$P_z=\mathrm{diag}(1,w,\ldots,w^{d-1})$ where $w$ is a primitive $d$-th root of unity.
For $(k,l)\in\{(0,0),(1,0),(0,1)\}$, $\Harm(k,l)$ is an irreducible representation of $G$.
Therefore for any vector $x\in\Om(d)$, $Gx$ is a complex spherical $\cT$-design, where $\cT=\{(k,l)\in\nn^2:k+l\leq2\}$.
\end{example}

In order to check that $\Harm(k,l)^G = \{0\}$ in Theorem~\ref{thm:harmdesign} or $\Harm(k,l)$ is irreducible in Theorem~\ref{thm:irrdesign}, we use characters and dimension arguments. The following lemma is standard representation theory \cite{fh,sep}.

\begin{lemma}\label{lem:charrep}
Let $V$ be a representation space of $G$ with character $\chi: G \rightarrow \cx$. Then
\begin{itemize}
\item[(i)] $\dim(V^G) = \frac{1}{|G|} \sum_{g \in G} \chi(g)$, and
\item[(ii)] $V$ is irreducible if and only if $\frac{1}{|G|} \sum_{g \in G} \abs{\chi(g)}^2 = 1$.
\end{itemize}
\end{lemma}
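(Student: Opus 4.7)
My plan is to prove both parts via the Reynolds averaging operator together with Schur's lemma; both facts are entirely standard in the representation theory of finite groups.

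For part (i), I will introduce the averaging operator
\[
P := \frac{1}{|G|}\sum_{g\in G}\rho(g) \in \mathrm{End}(V),
\]
where $\rho:G\to\mathrm{GL}(V)$ is the given representation. The key point is that $\rho(h)P = P$ for every $h \in G$ (by reindexing the summation $g \mapsto hg$), which immediately forces $P^2 = P$ and shows that the image of $P$ is contained in $V^G$ while $P$ acts as the identity on $V^G$. Hence $P$ is the projection onto $V^G$, so taking traces gives
\[
\dim V^G = \tr P = \frac{1}{|G|}\sum_{g\in G}\tr\rho(g) = \frac{1}{|G|}\sum_{g\in G}\chi(g).
\]

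For part (ii), I will apply part (i) to the representation $V^*\otimes V \cong \mathrm{End}(V)$. Its $G$-fixed subspace is exactly $\mathrm{Hom}_G(V,V)$, and since $G$ is finite each $\rho(g)$ has eigenvalues on the unit circle, so $\chi_{V^*}(g) = \overline{\chi(g)}$ and the character of $V^*\otimes V$ at $g$ is $|\chi(g)|^2$. Part (i) therefore yields
\[
\dim\mathrm{Hom}_G(V,V) = \frac{1}{|G|}\sum_{g\in G}|\chi(g)|^2.
\]
Decomposing $V$ into isotypic components via Maschke's theorem as $V\cong \bigoplus_i m_i W_i$ with the $W_i$ pairwise non-isomorphic irreducibles and invoking Schur's lemma gives $\dim\mathrm{Hom}_G(V,V) = \sum_i m_i^2$. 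This integer sum of squares equals $1$ if and only if exactly one $m_i$ is $1$ with all others zero, i.e.\ if and only if $V$ is irreducible.

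There is no real obstacle here; the lemma collects two classical facts and the argument above is essentially forced. The only points requiring a moment of care are verifying that $P$ is a $G$-equivariant projector onto $V^G$ and correctly identifying the character of $V^*\otimes V$ as $|\chi|^2$, both of which are routine.
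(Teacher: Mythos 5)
Your proof is correct. Part (i) is exactly the argument the paper itself sketches (in a commented-out alternative): the averaging operator $P=\frac{1}{|G|}\sum_{g\in G}\rho(g)$ is an idempotent projecting onto $V^G$, and its trace gives the dimension. Part (ii) is where you diverge slightly: the paper's (commented-out) proof simply defines the character inner product $(\phi,\chi)=\frac{1}{|G|}\sum_g\overline{\phi(g)}\chi(g)$ and invokes the orthogonality relations as standard character theory, so that $(\chi,\chi)=\sum_i m_i^2$ is quoted rather than derived. You instead derive this fact from part (i) by applying it to $V^*\otimes V\cong\mathrm{End}(V)$, identifying the fixed subspace with $\Hom_G(V,V)$, and computing its dimension as $\sum_i m_i^2$ via Maschke and Schur. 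Your route is more self-contained (it proves the needed orthogonality statement rather than citing it), at the cost of invoking Schur's lemma in the form $\Hom_G(W,W)\cong\cx$, which requires the algebraically closed ground field---harmless here since everything is over $\cx$. The identification $\chi_{V^*}(g)=\overline{\chi(g)}$ via unit-circle eigenvalues is also correctly justified by finiteness of $G$. Both approaches are standard; the paper in fact only cites the lemma to the literature in its compiled form, so either argument would serve.
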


To find the dimension of $\Harm(k,l)^G$, it suffices to find the dimension of $\Hom(k,l)^G$.

\begin{lemma}\label{harminvariant}
\[
\dim_{\cx}(\Harm(k,l)^G)=\dim_{\cx}(\Hom(k,l)^G)-\dim_{\cx}(\Hom(k-1,l-1)^G).
\]
\end{lemma}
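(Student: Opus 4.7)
The plan is to reduce the statement to the harmonic decomposition of $\Hom(k,l)$ recalled in Section~\ref{sec:polynomials}, together with the fact that taking $G$-invariants commutes with direct sums.

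First, I would recall the $U(d)$-module decomposition
\[
\Hom(k,l) = \bigoplus_{i=0}^{\min\{k,l\}} \Harm(k-i,l-i).
\]
Applying the same identity to $\Hom(k-1,l-1)$ shows that $\Hom(k-1,l-1) = \bigoplus_{i=1}^{\min\{k,l\}} \Harm(k-i,l-i)$ (viewed as a subspace of $\Hom(k,l)$ on the sphere, since $\Hom(k-1,l-1) \sbs \Hom(k,l)$). Subtracting, we obtain the $U(d)$-module decomposition
\[
\Hom(k,l) = \Harm(k,l) \oplus \Hom(k-1,l-1),
\]
with the convention that $\Hom(k-1,l-1) = \{0\}$ when $k=0$ or $l=0$. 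The decomposition is $U(d)$-equivariant because $\Harm(k,l)$ is the kernel of the Laplacian $\Delta$, which commutes with the $U(d)$-action. Since $G \sbs U(d)$, this is in particular a decomposition of $G$-modules.

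Second, since we work over $\cx$ and $G$ is finite, the functor $V \mapsto V^G$ is exact and commutes with finite direct sums, so
\[
\Hom(k,l)^G = \Harm(k,l)^G \oplus \Hom(k-1,l-1)^G.
\]
Taking complex dimensions yields the claimed identity.

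There is no serious obstacle in this argument; the only point worth verifying is that the harmonic decomposition is truly a decomposition of $G$-modules, which follows because $\Delta$ is $U(d)$-equivariant and the inclusion $\Hom(k-1,l-1) \hookrightarrow \Hom(k,l)$ (multiplication by the constant function $z^*z = 1$) is likewise $U(d)$-equivariant.
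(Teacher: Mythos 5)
Your proof is correct, and it takes a slightly different route from the paper's. The paper argues via the Laplacian directly: $\De$ maps $\Hom(k,l)$ onto $\Hom(k-1,l-1)$ with kernel $\Harm(k,l)$, commutes with the $U(d)$-action, hence restricts to a surjection $\Hom(k,l)^G \to \Hom(k-1,l-1)^G$ with kernel $\Harm(k,l)^G$, and the identity follows from rank--nullity. You instead invoke the full harmonic decomposition $\Hom(k,l) = \bigoplus_{i} \Harm(k-i,l-i)$ from Section~\ref{sec:polynomials} to get the $G$-module splitting $\Hom(k,l) = \Harm(k,l) \oplus \Hom(k-1,l-1)$, and then use the fact that taking invariants distributes over direct sums. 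Your route buys you a cleaner final step: $(V \oplus W)^G = V^G \oplus W^G$ holds for \emph{any} group action (each projection is equivariant), so you do not actually need exactness of invariants or finiteness of $G$ at that point --- that remark in your writeup is harmless but superfluous. The paper's route is more self-contained in the sense that it only needs the kernel/image description of $\De$ rather than the full decomposition, but it does silently rely on the surjectivity of $\De$ \emph{restricted to invariants}, which is exactly the point your direct-sum argument makes transparent. Both proofs ultimately rest on the same $U(d)$-equivariance of $\De$ (equivalently, of the harmonic decomposition), and your verification that the inclusion $\Hom(k-1,l-1) \hookrightarrow \Hom(k,l)$ is equivariant is the right thing to check.
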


\begin{proof}
The Laplacian operator $\De = \sum_{i=1}^d \partial^2/\partial z_i \partial \overline{z_i}$ maps $\Hom(k,l)$ onto $\Hom(k-1,l-1)$ with kernel $\Harm(k,l)$. Moreover, it is not difficult to check that $\De$ commutes with the action of each $U \in U(d)$ on functions on $\Om(d)$. Therefore $\De: \Hom(k,l)^G \rightarrow \Hom(k-1,l-1)^G$ is surjection and has kernel $\Harm(k,l)^G$, so the result follows.
\end{proof}

In order to find the dimension of $\Hom(k,l)^G$, we use a Molien-series theorem that gives a generating function for the dimension for all $k$ and $l$.

\begin{theorem}\label{thm:molien}
Let $G$ be a finite subgroup of $U(d)$. Then
\[
\sum_{k,l=0}^\infty \dim_{\cx}(\Hom(k,l)^G)x^ky^l =\frac{1}{\abs{G}}\sum_{g\in G}\frac{1}{\det(I-xg)\det(I-y\co{g})}.
\]
\end{theorem}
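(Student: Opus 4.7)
The plan is to reduce the identity to the combination of two standard facts: (a) for any finite-dimensional complex representation $V$ of a finite group $G$ with character $\chi_V$, $\dim_{\cx}(V^G) = \frac{1}{|G|}\sum_{g\in G}\chi_V(g)$ (this is Lemma~\ref{lem:charrep}(i)); and (b) the generating function identity $\sum_{k\geq 0}\chi_{\Sym^k V}(g)\,t^k = \det(I - t\rho_V(g))^{-1}$, which follows from the fact that $\chi_{\Sym^k V}(g)$ is the $k$-th complete homogeneous symmetric polynomial in the eigenvalues of $\rho_V(g)$, and $\sum_k h_k(\lambda_1,\ldots,\lambda_d)t^k = \prod_i(1-t\lambda_i)^{-1}$.

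The first step would be to decompose $\Hom(k,l)\cong \Sym^k(U_1)\otimes\Sym^l(U_2)$ as $G$-modules, where $U_1:=\Hom(1,0)$ is spanned by $\{z_1,\ldots,z_d\}$ and $U_2:=\Hom(0,1)$ is spanned by $\{\co{z_1},\ldots,\co{z_d}\}$. Multiplicativity of characters under tensor product then yields
\[
\chi_{\Hom(k,l)}(g)=\chi_{\Sym^k U_1}(g)\cdot\chi_{\Sym^l U_2}(g).
\]
Next I would identify the representation matrices of $g$ on $U_1$ and $U_2$ explicitly. From the action $(gf)(z)=f(g^*z)$, a direct calculation gives
\[
g\cdot z_i=\sum_j \co{g_{ji}}\,z_j,\qquad g\cdot\co{z_i}=\sum_j g_{ji}\,\co{z_j},
\]
so $g$ acts on $U_1$ by the matrix $\co{g}$ and on $U_2$ by the matrix $g$. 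Applying fact (b) to each tensor factor then produces
\[
\sum_{k,l\geq 0}\chi_{\Hom(k,l)}(g)\,x^ky^l=\frac{1}{\det(I-x\co{g})\det(I-yg)}.
\]

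Averaging over $G$ and applying (a) term by term gives a valid generating function, but with $x$ paired to $\co{g}$ and $y$ to $g$ — opposite to the form in the theorem. The final step is to substitute $g\mapsto g^{-1}$ in the sum, a bijection of $G$. Since $g$ is unitary, $g^{-1}=g^*=\co{g}^T$, so $\det(I-x\,\co{g^{-1}})=\det(I-xg^T)=\det(I-xg)$ and $\det(I-yg^{-1})=\det(I-y\co{g}^T)=\det(I-y\co{g})$, converting the expression into exactly the right-hand side stated.

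The only delicate point is tracking the adjoint convention $(gf)(z)=f(g^*z)$ carefully enough to identify the representation matrices on $U_1$ and $U_2$; once that is pinned down, the rest is a routine assembly of standard generating functions from representation theory.
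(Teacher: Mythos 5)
Your proof is correct and follows essentially the same route as the paper's: decompose $\Hom(k,l)$ as $\Sym^k(\Hom(1,0))\otimes\Sym^l(\Hom(0,1))$, compute the character of each symmetric power from the eigenvalues of $g$ on the degree-one pieces, multiply the resulting geometric-series generating functions, and average over $G$ using $\dim_{\cx}(V^G)=\frac{1}{|G|}\sum_{g}\chi(g)$. You are in fact more careful than the paper on one point: the paper simply identifies $\rho_{1,0}(g)$ with $g$ itself, whereas you correctly compute that under the convention $(gf)(z)=f(g^*z)$ the matrix on $\Hom(1,0)$ is $\co{g}$ and then repair the resulting mismatch by the substitution $g\mapsto g^{-1}$ together with unitarity --- a legitimate fix, since that substitution is a bijection of $G$ and $\det(I-x\co{g^{-1}})=\det(I-xg)$.
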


\begin{proof}
For convenience, set $S_{k,l}=\Hom(k,l)$ with representation $\rho_{k,l}:G \rightarrow S_{k,l}$ and character $\chi_{k,l}$. We will write $\bla^{\bal}=\la_1^{\al_1}\cdots\la_d^{\al_d}$, $\co{\bla}^{\bbe}={\co{\la_1}}^{\be_1}\cdots{\co{\la_d}}^{\be_d}$ and $\abs{\bal}=\sum_{i=1}^d\al_i$.

Fix $g\in G$, and let $\la_1,\ldots,\la_d$ be the eigenvalues of $g = \rho_{1,0}(g)$, with eigenvectors $v_1,\ldots,v_d \in S_{1,0}$. Since $S_{k,0} = \Sym^k(S_{1,0})$, the space of $k$-th symmetric powers of $S_{1,0}$, it follows that the $k$-th symmetric powers of $\{v_1,\ldots,v_d\}$ form a basis for $S_{k,0}$, and each basis vector is an eigenvector for $\rho_{k,0}(g)$. Therefore $\rho_{k,0}(g)$ has eigenvalues $\bla^{\bal}$, where $\bal = (\al_1,\ldots,\al_d) \in \nn^d$ with $\abs{\al} = k$. Similarly, $\rho_{0,1}(g)$ has eigenvalues $\co{\la_1},\ldots,\co{\la_d}$, and $\rho_{0,l}(g)$ has eigenvalues ${\co{\bla}}^{\bbe}$, for $\abs{\bbe} = l$. Therefore $\rho_{k,l}(g)$ has eigenvalues $\bla^{\bal}{\co{\bla}}^{\bbe}$, and
\begin{equation}\label{eqn:symchar}
\chi_{k,l}(g)=\sum_{\abs{\bal}=k,\abs{\bbe}=l}\bla^{\bal}{\co{\bla}}^{\bbe}
\end{equation}

Next, observe that 
\begin{align*}
\sum_{\bal \in{\nn}^d}\bla^{\bal}x^{\abs{\bal}}&=\sum_{\bal \in{\nn}^d}(\la_1 x)^{\al_1}\cdots (\la_d x)^{\al_d}\\
&=\prod_{i=1}^d \sum_{\al_i=0}^{\infty}(\la_i x)^{\al_i}\\
&=\prod_{i=1}^d \frac{1}{1-\la_i x}\\
&=\frac{1}{\det(I-xg)},
\end{align*}
from which we get 
\begin{equation}\label{eqn:ggenfunc}
\sum_{\bal,\bbe\in{\nn}^d}\bla^{\bal}{\co{\bla}}^{\bbe}x^{\abs{\bal}}y^{\abs{\bbe}}=\frac{1}{\det(I-xg)\det(I-y\co{g})}.
\end{equation}
We are now ready to prove Molien's theorem:
\begin{align*}
\sum_{k,l=0}^\infty \dim_{\cx}(S_{k,l}^G)x^ky^l&=\frac{1}{\abs{G}}\sum_{k,l=0}^\infty \sum_{g\in G}\chi_{k,l}(g)x^ky^l\quad (\text{by Lemma \ref{lem:charrep})}\\
&=\frac{1}{\abs{G}}\sum_{k,l=0}^\infty \sum_{g\in G}\sum_{\abs{\bal}=k,\abs{\bbe}=l}\bla^{\bal}{\co{\bla}}^{\bbe} x^ky^l\quad (\text{by equation \ref{eqn:symchar}})\\
&=\frac{1}{\abs{G}}\sum_{g\in G}\frac{1}{\det(I-xg)\det(I-y\co{g})}\quad (\text{by equation \ref{eqn:ggenfunc}}).
\end{align*} 
\end{proof}

Therefore we also obtain a generating function for the harmonic polynomials, which gives an easy method of checking if $\Harm(k,l)^G = \{0\}$ (as in Theorem~\ref{thm:harmdesign}):

\begin{corollary}
Let $G$ be a finite subgroup of $U(d)$. Then $\dim_{\cx}(\Harm(k,l)^G)$ is the coefficient of $x^ky^l$ in 
\begin{align*}
\frac{1}{\abs{G}}\sum_{g\in G}\frac{1-xy}{\det(I-xg)\det(I-y\co{g})}.
\end{align*} 
\end{corollary}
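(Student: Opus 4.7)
The plan is to combine Lemma~\ref{harminvariant} with Molien's theorem (Theorem~\ref{thm:molien}) at the level of generating functions. First I would form the bivariate generating function
\[
\Phi(x,y) := \sum_{k,l=0}^{\infty} \dim_\cx(\Harm(k,l)^G)\, x^k y^l,
\]
and, writing $\Psi(x,y) := \sum_{k,l=0}^{\infty} \dim_\cx(\Hom(k,l)^G)\, x^k y^l$, I would apply Lemma~\ref{harminvariant} termwise, adopting the convention $\dim_\cx(\Hom(k-1,l-1)^G) = 0$ whenever $k = 0$ or $l = 0$ (which matches the exact sequence $0 \to \Harm(k,l) \to \Hom(k,l) \xrightarrow{\Delta} \Hom(k-1,l-1) \to 0$ at those boundary cases since $\Delta$ is trivially surjective onto the zero space).

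Next I would observe that the shift in indices corresponds to multiplication by $xy$ at the level of generating functions, namely
\[
\sum_{k,l=0}^{\infty} \dim_\cx(\Hom(k-1,l-1)^G)\, x^k y^l = xy\,\Psi(x,y).
\]
Combining this with Lemma~\ref{harminvariant} yields $\Phi(x,y) = (1-xy)\,\Psi(x,y)$. Substituting the expression for $\Psi(x,y)$ supplied by Theorem~\ref{thm:molien} gives
\[
\Phi(x,y) = \frac{1}{\abs{G}} \sum_{g \in G} \frac{1-xy}{\det(I-xg)\det(I-y\co{g})},
\]
and reading off the coefficient of $x^k y^l$ finishes the proof.

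There is no real obstacle here: the two key inputs are both already in hand, and the only point requiring minor care is the boundary behaviour of Lemma~\ref{harminvariant} in the cases $k=0$ or $l=0$, which one handles by the convention above (equivalently, by noting that $\Harm(k,0) = \Hom(k,0)$ and $\Harm(0,l) = \Hom(0,l)$, so no subtraction is needed on the axes and the factor $xy$ correctly contributes nothing).
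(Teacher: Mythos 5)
Your proposal is correct and follows essentially the same route as the paper: apply Lemma~\ref{harminvariant} termwise to the generating function, recognize the index shift as multiplication by $xy$, and substitute the Molien series from Theorem~\ref{thm:molien} to obtain the factor $1-xy$. Your extra care with the boundary cases $k=0$ or $l=0$ (where $\Harm(k,0)=\Hom(k,0)$ and no subtraction occurs) is a detail the paper leaves implicit, but the argument is the same.
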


\begin{proof}
By Lemma~\ref{harminvariant} and Theorem~\ref{thm:molien},
\begin{align*}
\sum_{k,l=0}^\infty \dim_{\cx}(\Harm(k,l)^G)x^ky^l&=\sum_{k,l=0}^\infty (\dim_{\cx}(S_{k,l}^G)-\dim_{\cx}(S_{k-1,l-1}^G))x^ky^l\\
&=\sum_{k,l=0}^\infty \dim_{\cx}(S_{k,l}^G)x^ky^l-xy\sum_{k,l=0}^\infty \dim_{\cx}(S_{k,l}^G)x^ky^l\\
&=\frac{1}{\abs{G}}\sum_{g\in G}\frac{1-xy}{\det(I-xg)\det(I-y\co{g})}.
\end{align*}
\end{proof}

Likewise, there is a quick way of checking if $\Harm(k,l)$ is irreducible for all $(k,l)$ in a lower set $\cU$ (as in Theorem~\ref{thm:irrdesign}):

\begin{corollary}
Let $G$ be a finite subgroup of $U(d)$. Then $\Harm(k-i,l-i)$ is irreducible for all $i = 0,\ldots,\min(k,l)$ if and only if the coefficient of $x^kz^ky^lw^l$ in 
\begin{align*}
\frac{1}{\abs{G}}\sum_{g\in G}\frac{1}{\det(I-xg)\det(I-y\co{g})\det(I-z\co{g})\det(I-wg)}
\end{align*} 
is equal to $\min(k,l)+1$.
\end{corollary}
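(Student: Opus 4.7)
The plan is to adapt the eigenvalue computation from the proof of Theorem~\ref{thm:molien} to identify the coefficient of $x^kz^ky^lw^l$ as the squared $G$-character norm $\langle\chi_{k,l},\chi_{k,l}\rangle_G$ of $\Hom(k,l)$, and then combine this with the decomposition $\Hom(k,l)=\bigoplus_{i=0}^{\min(k,l)}\Harm(k-i,l-i)$ together with Lemma~\ref{lem:charrep}(ii).

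First I would expand each of the four factors separately. Exactly as in the proof of Theorem~\ref{thm:molien}, if $g\in G$ has eigenvalues $\lambda_1,\dots,\lambda_d$, then $1/\det(I-xg)=\sum_{k\geq0}\chi_{k,0}(g)x^k$, $1/\det(I-y\co{g})=\sum_{l\geq0}\chi_{0,l}(g)y^l$, and analogously the $z$ and $w$ factors contribute $\chi_{0,k}(g)z^k$ and $\chi_{l,0}(g)w^l$ in their respective degrees. Multiplying the four series, extracting the $x^kz^ky^lw^l$ coefficient, and averaging gives
\[
\frac{1}{|G|}\sum_{g\in G}\chi_{k,0}(g)\chi_{0,l}(g)\chi_{0,k}(g)\chi_{l,0}(g).
\]
Since $\Hom(k,l)\cong\Hom(k,0)\otimes\Hom(0,l)$ as $U(d)$-representations we have $\chi_{k,l}=\chi_{k,0}\chi_{0,l}$, and relabeling $\bal\leftrightarrow\bbe$ in~\eqref{eqn:symchar} yields $\chi_{l,k}=\overline{\chi_{k,l}}$. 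The expression above therefore simplifies to $\frac{1}{|G|}\sum_{g\in G}|\chi_{k,l}(g)|^2=\langle\chi_{k,l},\chi_{k,l}\rangle_G$.

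Next, writing $\chi_{k,l}=\sum_{i=0}^{\min(k,l)}\psi_i$ where $\psi_i$ is the character of $\Harm(k-i,l-i)$, bilinearity gives
\[
\langle\chi_{k,l},\chi_{k,l}\rangle_G=\sum_{i=0}^{\min(k,l)}\langle\psi_i,\psi_i\rangle_G+\sum_{i\neq j}\langle\psi_i,\psi_j\rangle_G.
\]
By Lemma~\ref{lem:charrep}(ii), $\langle\psi_i,\psi_i\rangle_G$ is a positive integer equal to $1$ exactly when $\Harm(k-i,l-i)$ is irreducible, while each $\langle\psi_i,\psi_j\rangle_G$ for $i\neq j$ is a nonnegative integer counting the common irreducible $G$-constituents of $\Harm(k-i,l-i)$ and $\Harm(k-j,l-j)$. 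Hence the coefficient is always at least $\min(k,l)+1$, with equality precisely when each $\Harm(k-i,l-i)$ is irreducible \emph{and} the resulting irreducibles are pairwise non-isomorphic as $G$-representations.

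The main obstacle to matching the corollary's statement verbatim is this second ``distinctness'' clause, which is not made explicit: although the $\Harm(k-i,l-i)$ are inequivalent $U(d)$-irreducibles, restriction to a finite subgroup $G$ could in principle fuse some of them. The argument above therefore proves the slightly sharper equivalence, and the corollary is best read as the simultaneous assertion of irreducibility and pairwise non-isomorphism---which is exactly the hypothesis needed when invoking Theorem~\ref{thm:irrdesign}.
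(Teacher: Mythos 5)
Your argument is correct and is essentially the paper's own proof: both identify the coefficient of $x^kz^ky^lw^l$ with $\frac{1}{|G|}\sum_{g\in G}|\chi_{k,l}(g)|^2$ via the eigenvalue expansion of Theorem~\ref{thm:molien} and then exploit $\Hom(k,l)=\bigoplus_{i=0}^{\min(k,l)}\Harm(k-i,l-i)$, the paper writing the final count as $\sum_\mu m_\mu^2$ over irreducible $G$-constituents, which is the same bookkeeping as your $\sum_{i,j}\langle\psi_i,\psi_j\rangle_G$. The ``distinctness'' caveat in your last paragraph is in fact vacuous, so the corollary is correct as literally stated: for $d\geq 2$ and $k,l\geq1$ one has $m_{k,l}>m_{k-1,l-1}$ (setting $a_j=\binom{d+j-1}{d-1}$ and $b_j=a_j-a_{j-1}=\binom{d+j-2}{d-2}$, the identity $m_{k,l}=a_{k-1}b_l+b_ka_{l-1}+b_kb_l$ dominates $m_{k-1,l-1}$ term by term), hence the spaces $\Harm(k-i,l-i)$ have pairwise distinct dimensions and, once each is $G$-irreducible, they cannot be fused by restriction to $G$.
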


\begin{proof}
Without loss of generality assume $k \geq l$, and let $\chi_{k,l}$ be the character of $\Hom(k,l)$. Since $\chi_{k,l}(g)$ is the coefficient of $x^ky^l$ in $\frac{1}{\det(I-xg)\det(I-y\co{g})}$, (as in the proof above), it follows that $(\chi_{k,l},\chi_{k,l}) = \frac{1}{|G|}\sum_{g \in G} \abs{\chi_{k,l}(g)}^2$ is the coefficient of $x^kz^ky^lw^l$
\[
\frac{1}{\abs{G}}\sum_{g\in G}\frac{1}{\det(I-xg)\det(I-y\co{g})\det(I-z\co{g})\det(I-wg)}.
\]
But $(\chi_{k,l},\chi_{k,l}) = \sum_\mu m_\mu^2$, where $m_\mu$ is the number of times an irreducible representation $V_\mu$ occurs in $\Hom(k,l)$. At least $l+1$ irreducible representations occur in $\Hom(k,l)$, since $\Hom(k,l)$ is a direct sum of the $l+1$ representations $\Harm(k,l),\Harm(k-1,l-1),\ldots,\Harm(k-l,0)$. If some irrep occurred more than once, or more than $l+1$ irreps occurred, then we would have $\sum_\mu m_\mu^2 > l+1$. So each $\Harm(k-i,l-i)$ must be distinct and irreducible.
\end{proof}

\section{Designs from other combinatorial objects}\label{sec:other}
In this section we show some examples of complex spherical designs and nonsymmetric association schemes that do not arise from any of our previous results. In particular, we construct designs from Singer difference sets, mutually unbiased bases in odd dimensions, and orthogonal arrays. In the case of Singer difference sets and MUBs, the designs carry an association scheme.

\begin{example}{\bf(Singer Difference Sets)} 
Let $q$ be a prime power and consider the group $G = GF(q^3)^*/GF(q)^*$. If $\tr: GF(q^3) \rightarrow GF(q)$ denotes the finite field trace $\tr(x) = x + x^q + x^{q^2}$, then 
\[
D := \{xGF(q^*) : \tr(x) = 0\}
\]
is called the \defn{Singer difference set} of $G$ \cite{BJL}. Since $G \cong \zz_n$, $n = q^2+q+1$, we will write the group multiplication as addition. Since $\tr(x^q)= \tr(x)$ it follows that $qD = D$, so $q$ is a difference set multiplier for $D$. For $g \in G$, let $\chi_g: G \rightarrow \cx^*$ denote the character of $G$ indexed by $g$ (so $\chi_g(x) = \om^{gx}$, where $\om$ is an $n$-th primitive root of unity). Now consider $\chi_g|_D$, the restriction of $\chi_g$ to $D$, treated as a vector in $\cx^d$, $d = |D|$. We have 
\[
(\chi_g|_D)^*\chi_h|_D =  \sum_{d \in D} \om^{(g-h)d} = \sum_{d \in D} \chi_{g-h}(d),
\]
which we denote $\chi_{g-h}(D)$. It follows that the set 
\[
X = \{\frac{\chi_g|_D}{\sqrt{d}}: g \in G\}
\]
of normalized characters restricted to $D$ has inner product set 
\[
A(X) = \{\chi_g(D)/d: g \in G\}.
\]
Since $q$ is a multiplier of $D$, it follows that $\chi_g(D) = \chi_{qg}(D)$. We claim that $\chi_g(D) = \chi_h(D)$ if and only if $gD$ and $hD$ are the same orbit of the translation $x \mapsto qx$. For, if $\chi_g(D) = \chi_h(D)$, then $\om$ is a root of the polynomial
\[
p(x) = \sum_{d \in D} x^{gd} - \sum_{d\in D} x^{hd},
\]
which has degree at most $n-1$. But $p(x)$ is also a multiple of the minimal polynomial of $\om$, namely $\sum_{i=0}^{n-1} x^i$, which also has degree $n-1$. So the coefficients in $p(x)$ must be constant and the only constant possible is $0$. Thus $gD = hD$.

Finally, we note that $X$ carries an association scheme. This follows from the fact that the classes carried by $X$ are the orbitals of the group generated the actions $x \mapsto qx$ and $\{x \mapsto x+g: g \in G\}$ on $X$. (The scheme is commutative because it is a translation scheme~\cite[Section 2.10]{bcn}.)

To get a design, take an $n$-antipodal cover of $X$. From Lemma \ref{lem:projectivedesign}, such a cover is a $\cT$-design with $\cT = \{(k,l): k+l \leq 2\}$.

\end{example}

\begin{example}{\bf(MUBs in $\cx^p$, $p$ odd)} 
Let $p$ be an odd prime and let $\om$ be a $p$-th primitive root of unity. (In fact all the results in this example can be extended to prime power dimensions, but we show only the prime case here.) For $(i,y) \in \zz_p^2$, define vector $v_{i,y}$ in $\cx^p$:
\[
(v_{i,y})_x := \frac{1}{\sqrt{p}}\om^{ix^2 +yx} \quad (x \in \zz_p),
\]
and let $X = \{v_{i,y}: i,y \in \zz_p\}$.
Gauss sums can be used to evaluate the inner products of $X$ (see \cite[Theorem 5.15]{ln}):
\[
v_{i,y}^*v_{j,z} = \begin{cases}
c_p w^{-(z-y)^2/4(j-i)} (\frac{j-i}{p}), & i \neq j; \\
0, & i = j, y \neq z; \\
1, & i = j, y = z. 
\end{cases}
\]
Here $c_p$ is some constant with absolute value $\sqrt{p}$, and $(\frac{j-i}{p})$ is the Legendre symbol. It follows that the bases $B_i = \{v_{i,y}: y \in \zz_p\}$ are mutually unbiased. Combined with the standard basis, we get a complete set of MUBs \cite{gr,kr}. We claim that the inner product relations on $X$ are the orbitals of a transitive permutation group and therefore define an association scheme. 

First, notice that $v_{i,y}^*v_{j,z}$ depends only on $(j-i,z-y)$. Therefore the inner products are preserved by the group translations $(j,z) \mapsto (j+k,z+x)$. So consider $v_{0,0}^*v_{j,z}$. Note that $v_{0,0}^*v_{j,z} = v_{0,0}^*v_{i,y}$ if and only if $i$ and $j$ have the same quadratic character and $\tfrac{z^2}{4j} = \tfrac{y^2}{4i}$ (for $i,j \neq 0$). Thus the inner products are preserved by the permutations $(j,z) \mapsto (x^2j,xz)$, for $x \neq 0$ in $\zz_p$, and conversely if $\tfrac{z^2}{4j} = \tfrac{y^2}{4i}$ then some $x \neq 0$ satisfies $(x^2j,xz) = (i,y)$. Thus the relations defined by the inner products of $X$ are the orbitals of the group generated by 
\[
\{(j,z) \mapsto (j+k,z+x): k,x \in \zz_p\} \cup \{(j,z) \mapsto (x^2j,xz): x \in \zz_p, x \neq 0\}.
\]

To get a design, take an $p$-antipodal cover of $X$.  From Lemma \ref{lem:projectivedesign}, such a cover is a $\cT$-design with $\cT = \{(k,l): k+l \leq 2\}$.
\end{example}

Let $\mathcal{Q}=\{1,\ldots,q\}$, and $\mathcal{P}$ be the set of words of length $d$ over $\mathcal{Q}\cup\{0\}$.
For $u=(u_i)_{i=1}^d, v=(v_i)_{i=1}^d$, set $u\preceq v$ if and only if either
$u_i=0$ or $u_i=v_i$ for all $i$.
Then $(\mathcal{P},\preceq)$ forms a \defn{regular semilattice} with rank function $\mathrm{rank}(u)=|\{i:u_i\neq 0\}|$ (see \cite{del2}).
The set $X_r$ denotes the set of elements of rank $r$, where $0 \leq r\leq d$. 
\begin{definition}
Let $t,r$ be positive integers such that $t\leq r\leq d$.
A subset $X$ in  $X_r$ is said to be a \defn{nonbinary block $t$-design} if the number $\lambda_t=|\{x\in X:y\preceq x\}|$ does not depend on the choice of $y\in X_t$.
\end{definition}
When $r=d$, a nonbinary block $t$-design coincides with an orthogonal array with strength $t$. (Recall that an $n\times d$ array $X$ with entries from $\{1,\cdots,q\}$ is an \defn{orthogonal array}  $OA(n,d,q,t)$ if every $n\times t$ subarray of $X$ contains each $t$-tuple in $\{1,\cdots,q\}^t$ exactly $\lambda_t =n/q^t$ times as a row~\cite{BJL}.)
Define $\psi:\mathcal{P}\rightarrow \Om(d)$ as follows:
for a element $x$ in $\mathcal{P}$ with rank $r$, 
\begin{equation*}
\psi(x)_k=
\begin{cases}
\frac{1}{\sqrt{r}}w^{x_k} & \mbox{if }x_k \in\mathcal{Q}; \\
0& \mbox{if }x_k=0,
\end{cases}
\end{equation*}
where $w$ is a primitive $q$-th root of unity.
The image of $\psi$ for a subset $X$ in $\mathcal{P}$ is denoted by $\tilde{X}$. 
The following theorem constructs a small strength design from a nonbinary block designs.
\begin{theorem}
Let $X$ be a nonbinary block $t$-design $X$ in $X_r$ with $q\geq 3$ and $2\leq  t \leq  r \leq d$.
Then $\tilde{X}$ is a $\cT$-design in $\Om(d)$, where $\cT=\{(k,l)\in\nn^2:k+l\leq 2\}$.
\end{theorem}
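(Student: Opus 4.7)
The plan is to verify $(k,l)$-regularity of $\tilde X$ for each $(k,l)\in\cT=\{(0,0),(1,0),(0,1),(1,1),(2,0),(0,2)\}$, treating only the representatives $(1,0)$, $(2,0)$, $(1,1)$ explicitly; the cases $(0,1)$ and $(0,2)$ then follow by complex conjugation, and $(0,0)$ is trivial. For each such $(k,l)$ I would expand over the monomial basis of $\Hom(k,l)$ and match the average over $\tilde X$ against the explicit integral from the end of Section~\ref{sec:polynomials}, which vanishes unless every $z_i$ and $\overline{z_i}$ carry matched exponents.

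Before computing averages, I would first record that a nonbinary block $t$-design is automatically an $s$-design for every $s\le t$, so that $\lambda_1$ and $\lambda_2$ are well-defined. This comes from a standard flag count of triples $(y,y',x)$ with $y\in X_s$, $y'\in X_t$, $y\preceq y'\preceq x$, yielding $\lambda_s\binom{r-s}{t-s}=\lambda_t\binom{d-s}{t-s}q^{t-s}$. A second double count, of pairs $\{(x,k):x_k\neq 0\}$, gives the identity $r|X|=dq\lambda_1$, which I will need for the $(1,1)$-diagonal term.

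With these in hand, each monomial sum over $\tilde X$ reduces to some $\lambda_j/r^{(k+l)/2}$ times a character sum of the form $\sum_{a=1}^{q}w^{ma}$ or $\sum_{a,b=1}^{q}w^{ma+nb}$. The key identity is $\sum_{a=1}^q w^{ma}=0$ whenever $m\not\equiv 0\pmod q$; since only $|m|\le 2$ appears, the hypothesis $q\ge 3$ is exactly what is needed to cover $m=\pm 2$. On the diagonal $i=j$ of a $(1,1)$-monomial the character sum collapses to $q$, and the identity $r|X|=dq\lambda_1$ forces the average to equal $|X|/d$, matching $|X|\int_{\Om(d)}\abs{z_i}^2\dd z=|X|/d$. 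All off-diagonal and all $(1,0)$, $(2,0)$ monomials yield $0$ against matching $0$ integrals.

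The main obstacle is essentially bookkeeping: one must track which cases require the $2$-design property (namely the off-diagonal $(2,0)$ and $(1,1)$ sums, where two coordinates must simultaneously be nonzero) versus which require only the $1$-design property, and likewise isolate where $q\ge 3$ is genuinely used (namely the diagonal $(2,0)$ sum, to force $\sum_{a=1}^{q}w^{2a}=0$). There are no deeper ingredients beyond elementary character sums and double counting.
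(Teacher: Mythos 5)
Your proposal is correct and follows essentially the same route as the paper: a monomial-by-monomial check over $\Hom(k,l)$ for $k+l\le 2$, with the unmatched-exponent terms killed by root-of-unity sums (the paper phrases this as $q$-antipodality of the coordinate restrictions, which is the same character-sum computation, and likewise needs $q\ge 3$ only for the squared terms) and the diagonal $(1,1)$ term matched via the double count $r|X|=dq\lambda_1$. Your explicit flag count showing $\lambda_1,\lambda_2$ are well-defined makes precise a step the paper leaves implicit, but it is not a different method.
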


\begin{proof}
For any $(k,l) \neq (0,0)$ in $\cT$, consider the average of a monomial $f(x)=\prod_{i=1}^dx_i^{a_i}\co{x_i}^{b_i}\in\Hom(k,l)$, where $\sum_{i=1}^d a_i=k$ and $\sum_{i=1}^d b_i=l$. If there is some $i$ such that $a_i \neq b_i$, then since a set of $\tilde{X}$ restricted to any at most two indices is $q$-antipodal it is easy to check that
\[
\frac{1}{|X|}\sum_{x\in \tilde{X}}f(x) = 0 =\int_{\Om(d)} f(z) \dd z.
\]
On the other hand, if $a_i = b_i$ for all $i$, then $(k,l) = (1,1)$ and $f(x) = x_j\co{x_j}$ for some $j$. In this case,
\[
\frac{1}{|X|}\sum_{x\in \tilde{X}}f(x) = \frac{q\la_1}{r|X|}, \quad \int_{\Om(d)} f(z) \dd z = \frac{1}{d}.
\]
Counting the size of the set $\{(y,x)\in X_t\times X:y\preceq x \}$ in two ways yields $|X|\binom{r}{t}=\binom{d}{t}q^t\lambda_t$, and in particular $|X|r = dq\la_1$.
Thus the two averages are equal, and $\tilde{X}$ is a $\cT$-design in $\Om(d)$. 
\end{proof}
Note that in the above theorem, $\cT$ does not contain $(2,2)$ in general.
Indeed, for $f(x)=(x_1\co{x_1})^2$, 
\begin{align*}
\frac{1}{|X|}\sum_{x\in \tilde{X}}f(x)=\frac{1}{r d}\neq\frac{2}{d(d+1)}\int_{\Om(d)} f(z) \dd z,
\end{align*}
unless $r = 2/(d+1)$.

\section{Acknowledgements}
The authors would like to thank Chris Godsil for his helpful discussions and Bill Martin for his invitation to the 2010 WPI Workshop on Schemes and Spheres, where this work originated. Aidan Roy was funded by NSERC, Industry Canada, Quantum Works, and the Fields Institute.
Sho Suda was supported by a JSPS research fellowship.
\bibliographystyle{plain}
\bibliography{nonsymmetricpaper}

\end{document}